\documentclass{article}

\usepackage{a4}
\usepackage{graphicx}
\usepackage{subfigure}
\RequirePackage{amsthm,amsmath,amssymb,amsfonts,enumerate}

\usepackage{tikz}
\usepackage{makeidx}         
\usepackage{graphicx}        
\usepackage{multicol}        

\usepackage{amsmath}
\usepackage{amsfonts}
\usepackage{amsbsy}
\usepackage{graphics}
\usepackage{epsfig}
\usepackage{amssymb}
\usepackage{mathrsfs}
\usepackage{dsfont}

\usepackage{url}

\usepackage{color}
\usepackage[dvipsnames, table]{xcolor}
\usepackage{comment}

\def\Px{\mathsf{P}}
\def\mgbb{{\boldsymbol{\gamma}}}

\def\eqd{\stackrel{{d}}{=}}

\def\RR{\mathds{R}}
\def\QQ{\mathds{Q}}
\def\NN{\mathds{N}}
\def\SX{{\mathscr X}}
\def\Ib{\mathbf{I}}
\def\xb{\mathbf{x}}
\def\Xb{\mathbf{X}}
\def\Ex{\mathsf{E}}
\def\var{\mathsf{var}}
\def\simd{\stackrel{d}{\sim}}
\def\me{\epsilon}
\def\ml{\lambda}
\def\ms{\sigma}
\newcommand{\vsp} {\vspace{0.3cm}}
\def\PP{\mathbb{P}}
\def\SB{{\mathscr B}}
\def\SS{{\mathcal S}}
\def\0b{\mathbf{0}}
\newcommand{\bea}{\begin{eqnarray*}}
\newcommand{\eea}{\end{eqnarray*}}
\newcommand{\be}{\begin{eqnarray}}
\newcommand{\ee}{\end{eqnarray}}
\def\Rb{\mathbf{R}}
\def\ub{\mathbf{u}}
\def\CR{\mathsf{CR}}
\def\mg{\gamma}
\def\dd{\mbox{\rm d}}
\def\rad{\stackrel{\rm d}{\ra}}
\def\ra{\rightarrow}
\def\SN{{\mathcal N}}
\def\ma{\alpha}
\def\e1{\mathrm{e}}
\newcommand{\fin} {\mbox{}~\hfill{\lower-0.3ex\hbox{$\triangleleft$}}}
\def\Ind{\mathds{1}}
\def\SO{{\mathcal O}}
\def\SH{{\mathcal H}}
\def\SV{\mathscr{V}}
\def\bb{\mathbf{b}}

\newtheorem{proposition}{Proposition}[section]
\newtheorem{lemma}{Lemma}[section]
\newtheorem{corollary}{Corollary}[section]
\newtheorem{definition}{Definition}[section]
\newtheorem{remark}{Remark}[section]

\begin{document}


\title{Non-asymptotic quantisation of spherically symmetric distributions}



\author{Luc Pronzato\footnote{(corresponding author) Laboratoire I3S, CNRS-Universit\'e C\^ote d'Azur, Sophia Antipolis, France, {\tt pronzato@i3s.unice.fr}} \ and Anatoly Zhigljavsky\footnote{School of Mathematics, Cardiff University, UK, {\tt ZhigljavskyAA@cardiff.ac.uk}}}

\date{\today}

\maketitle

\begin{abstract}
Zador’s celebrated theorem is a cornerstone of optimal quantisation, establishing both the weak limit of the empirical distribution of an $n$-point optimal quantiser in $\RR^d$ and the decay rate of the associated $L_s$-mean quantisation error. However, for large dimensions $d$, observing this asymptotic behaviour demands an astronomically large sample size $n$, which grows super-exponentially with $d$. Through a detailed analysis of the quantisation problem for spherically symmetric distributions, we demonstrate that for moderate $n$ random quantisers uniformly distributed on a sphere of suitable radius $R$ achieve exceptional performance. The expected distortion, expressed as a triple integral, can be computed with arbitrary precision, and the optimal radius $R$ can be efficiently determined numerically. Leveraging results from extreme-value theory, we derive approximations for $R$, particularly in scenarios where $n$ scales with $d$. Depending on the growth rate of $n$, $R$ may either converge to zero or approach a limiting value $R_\infty$ that is independent of $s$.
\end{abstract}

{\bf keywords:} quantisation, distortion, spherically symmetric distribution, Zador's theorem, extreme-value theory





\section{Introduction}
\label{sec:intr}

For $\mu$ a measure on $\SX\subseteq\RR^d$ (or a $d$-dimensional Riemannian manifold),
with density $\varphi$ and finite moment of order $s'>s\geq 1$, the $(\mu,s)$-distortion on an $n$-point set $\Xb_n=\{\xb_1,\ldots,\xb_n\}\in\RR^{n\times d}$ is $D_{\mu,s}(\Xb_n)=\Ex_\mu\{ \min_{i=1,\ldots,n} \|U-\xb_i\|^s \}$, where $U\simd\mu$. Zador's celebrated theorem \cite{Zador82} states that the empirical distribution of an $n$-point optimal quantiser $\Xb_n^*$ minimising $D_{\mu,s}(\Xb_n)$ converges weakly to the probability measure with density proportional to $\varphi^{d/(d+s)}$ as $n\to\infty$, and the normalised optimal $(\mu,s)$-distortion $n^{1/d}\,D_{\mu,s}(\Xb_n^*)$ reaches asymptotically its minimum as $n\to\infty$. The normalisation by $n^{1/d}$ is essential, in the sense that these asymptotic considerations are not applicable if $n^{1/d}$  is not large enough. For large $d$, this requires astronomically large values of $n$ to approach the asymptotic behaviour.

The purpose of this paper is to show that for $\mu$ spherically symmetric, random quantisers with a suitable spherically symmetric distribution can achieve exceptional performance. When $n$ is large but $n^{1/d}$ is small, the best random quantisers we obtain are for designs $\Xb_n$ uniform on a sphere $\SS_{d-1}(R)$ with suitable radius $R$, and the variability of $D_{\mu,s}(\Xb_n)$ vanishes as $n$ increases. Using extreme-value theory, we derive approximations for the radius $R$ of $\SS_{d-1}$ and for the best expected $(\mu,s)$-distortion. In the particular case $s=2$, $R$ is proportional to $\Ex_\mu\{\|U\|\}$ with a constant depending only on $n$ and $d$, and the best expected $(\mu,2)$-distortion is $\Ex_\mu\{\|U\|^2\}-R^2$; see Corollary~\ref{Coro:astar-s2-Pa}. We then identify different asymptotic regimes where $d$ increases and $n$ grows with $d$. When $\mu$ possesses a norm-concentration property and concentrates on a sphere of radius $r$ as $d \to \infty$, the optimal radius $R$ of the random quantiser exhibits distinct behaviours depending on the growth rate of $n$ relative to $d$:
for a sub-exponential growth, $R \to 0$; for a super-exponential growth, $R \to r$; for an exponential growth $n = \ml^d(1 + o(1))$, $R \to r\sqrt{1 - 1/\ml^2}$; see Proposition~\ref{Prop:extreme-value-general-mu}.
Overall, the paper provides a framework for generating nearly optimal quantisers in the large $n$ and $d$ situation for spherically symmetric distributions.
Sequences of nested designs with low quantisation error can thus be efficiently generated, offering a compelling alternative to the classical greedy-packing algorithm for space-filling design (see, e.g., \cite{PZ2022}). While this paper focuses exclusively on spherically symmetric distributions, the conclusion highlights that extending this approach to the more common case of quantising the uniform measure in the $d$-dimensional hypercube yields promising numerical results.

\vsp
The paper is organised as follows. Section~\ref{S:distancecdt-etc} introduces the key concepts used throughout the paper. It also derives an explicit integral expression of the expected $(\mu,s)$-distortion of a random quantiser whose $n$ points are independently identically distributed (i.i.d.) according to a spherically symmetric distribution $\PP$. Section~\ref{S:ball-sphere-annulus} examines three emblematic cases: (\textit{i}) $\mu$ is uniform on the unit sphere, (\textit{ii}) $\mu$ is uniform in the unit ball, and (\textit{iii}) $\mu$ follows a multivariate spherically symmetric normal distribution. In the latter two scenarios, in addition to the case where $\PP$ is uniform on a sphere, we also analyse the cases where $\PP$ is uniform in a ball and where $\PP$ is a multivariate spherically symmetric normal distribution. Section~\ref{S:asymptotic} applies extreme-value theory to derive asymptotic results ($n \to \infty$) for random quantisers uniform on a sphere. The analysis is conducted in two scenarios: when $\mu$ is uniform on a sphere; when $\mu$ is spherically symmetric and satisfies a norm-concentration property, with the ball-uniform and normal distributions serving as key examples.
Section~\ref{S:conclusion} provides a brief conclusion to the paper.

\vsp
We denote by $\SB_d(r)$ the $d$-dimensional (closed) Euclidean ball with center $\0b_d$ and radius $r$ 
and by $\SS_{d-1}(r)$ the $d$-dimensional Euclidean sphere with center $\0b_d$ and radius $r$; $\|\cdot\|$ is the $\ell_2$-norm.

\section{Distance c.d.f., quantisation error and distortion}\label{S:distancecdt-etc}



\subsection{Definitions and basic properties}

For any $n$-point set $\Xb_n$ we denote by $d(\cdot,\Xb_n)$ the distance function defined by $\xb\in\RR^d \mapsto d(\xb,\Xb_n)=\min_{\xb_i\in\Xb_n}\|\xb-\xb_i\|$; $\nu$ denotes a probability measure on $\RR^d$.

\begin{definition}\label{D:distance-cdf}
For $\Xb_n$ a fixed $n$-point set in $\RR^d$, the \emph{distance c.d.f.} $F(\cdot\,;\Xb_n,\nu)$ is the c.d.f.\ of the random variable $d(U,\Xb_n)$ when $U$ is distributed with $\nu$:
\bea 
F(t;\Xb_n,\nu) = \nu\left\{U\in \bigcup_{i=1}^n \SB_d(\xb_i,t)\right\}  = \nu\{d(U,\Xb_n) \leq t\}\,.
\eea
When $\Xb_n=\Rb_n$ is a random $n$-point set generated with some probability measure $\Px$ on $\RR^{n\times d}$, we define the \emph{mean distance c.d.f.} $F_n(\cdot\,; \Px,\nu)$ of the random variable $d(U,\Rb_n)$ by $F_n(t; \Px,\nu) = \Ex_\Px\{F(t;\Rb_n,\nu) \}$.
\end{definition}

If $\nu$ is the delta measure $\delta_\ub$ concentrated at $\ub\in\RR^d$, the mean distance c.d.f.\ is simply $F_n(t; \Px,\delta_\ub)=\Px \left\{ d(\ub,\Rb_n)\leq t  \right\}$.

When $\SX$ is a compact subset of $\RR^d$, we denote by $\CR(\Xb_n)= \max_{\xb\in\SX} d(\xb,\Xb_n)$ the covering radius of $\Xb_n$. If $\nu$ is equivalent to the Lebesgue measure on $\SX$, $F(\cdot\,;\Xb_n,\nu)$ has a density on $[0,\CR(\Xb_n)]$, which we denote by $f(\cdot\,;\Xb_n,\nu)$, and the essential supremum of the random variable $d(U,\Xb_n)$ equals $\CR(\Xb_n)$.
When $\nu$ is the uniform measure on $\SX$, we simply denote the distance c.d.f.\ by $F(\cdot\,;\Xb_n)$: the random variable $d(U,\xb_n)$ is supported on $[0,\CR(\Xb_n)]$ and $F(\cdot\,;\Xb_n)$ contains all information about the filling of $\SX$ by $\Xb_n$. In particular, the $\mg$-quantiles $q_\mg(\Xb_n)$ of $F(\cdot\,;\Xb_n)$ provide useful space-filling characteristics. Assume that $\SX$ is connected and coincides with the closure of its interior. Then, the density $f(\cdot\,;\Xb_n)$ is strictly positive on $(0,\CR(\Xb_n))$ and $q_\mg(\Xb_n)$ is defined by
\bea 
F[q_\mg(\Xb_n);\Xb_n]=\mg \ \mbox{ for any }\mg\in(0,1)\,,
\eea
with $q_{0}(\Xb_n)=0$ and $q_{1}(\Xb_n)=\CR(\Xb_n)$.

\begin{definition}\label{D:quantisation-error}
The \emph{$L_s$-mean quantisation error} of a general probability measure $\mu$ on $\RR^d$ with finite $s$-th moment $\Ex\{\|X\|^s\}$ for a given $n$-point set $\Xb_n$ is the $L_s(\mu)$-norm of the distance function $d(\cdot,\Xb_n)$:
\bea 
E_{\mu,s}(\Xb_n) = \|d(\cdot,\Xb_n)\|_{L_s} = \left[\int_\SX d^s(\xb,\Xb_n)\,\mu(\dd\xb)\right]^{1/s}\,, \quad s>0\,.\;\;\;
\eea
The quantity $E_{\mu,s}^s(\Xb_n)$ is called the $(\mu,s)$-distortion related to $\Xb_n$ {\rm \cite{Pages97}}. When $\Xb_n=\Rb_n$ is random, we define the expected $(\mu,s)$-distortion as $D_{\mu,s}(\Px)=\Ex_\Px\{E_{\mu,s}^s(\Rb_n)\}$.
\end{definition}

For any $s>0$, the $(\mu,s)$-distortion satisfies (see, e.g., \cite[p.~150]{Feller71-2}):
\be
E_{\mu,s}^s(\Xb_n) = \Ex_\mu\{d^s(U,\Xb_n)\} = s\, \int_0^\infty t^{s-1}\,[1-F(t;\Xb_n,\mu)] \,\dd t \,. \label{moment-ds-cdf}
\ee
The result is easily obtained when $\mu$ is such that $F(\cdot\,;\Xb_n,\mu)$ has a density $f(\cdot\,;\Xb_n,\mu)$. Indeed, by swapping the order of integration we get
\bea
\Ex_\mu\{d(U,\Xb_n)\} &=& \int_0^\infty \tau\,f(\tau;\Xb_n,\mu)\,\dd \tau = \int_0^\infty \left(\int_0^\tau \dd t\right) \,f(\tau;\Xb_n,\mu)\,\dd \tau \\
&& \hspace{-1cm} = \, \int_0^\infty \left(\int_t^\infty  f(\tau;\Xb_n,\mu)\,\dd \tau\right) \,\dd t = \int_0^\infty [1-F(t;\Xb_n,\mu)] \,\dd t \,.
\eea
Now, for any $s>0$, $G^{(s)}(\cdot\,;\Xb_n,\mu)$, defined by $G^{(s)}(t\,;\Xb_n,\mu)=F(t^{1/s};\Xb_n,\mu)$ for any $t\geq 0$, is the c.d.f.\ of $d^s(U,\Xb_n)$, and we obtain
\bea
\Ex_\mu\{d^s(U,\Xb_n)\} = \int_0^\infty [1-F(t^{1/s};\Xb_n,\mu)] \,\dd t = s\, \int_0^\infty t^{s-1}\,[1-F(t;\Xb_n,\mu)] \,\dd t \,. 
\eea


\begin{remark}\label{R:distortion-evaluation}
For any probability measure $\mu$ such that $\Ex\{\|X\|^{2s}\}<\infty$, the $(\mu,s)$-distortion of an arbirary $n$-point set $\Xb_n$ can be approximated by its Monte-Carlo estimator $E_{\mu_N,s}^s(\Xb_n)=(1/N)\sum_{i=1}^N d^s(U_i,\Xb_n)$ where the $U_i$ are i.i.d.\ with $\mu$, and $E_{\mu_N,s}^s(\Xb_n)$ satisfies a classical central limit theorem:
\bea
\sqrt{N} \left[E_{\mu_N,s}^s(\Xb_n)-E_{\mu,s}^s(\Xb_n)\right] \rad \SN(0,V(\Xb_n))\,, \quad N\to\infty\,,
\eea
where $V(\Xb_n)=E_{\mu,2s}^{2s}(\Xb_n)-E_{\mu,s}^{2s}(\Xb_n)$. When $\mu$ is supported on $\SX$ compact, $\CR(\Xb_n)<\infty$ and a direct application of Hoeffding's inequality gives:
\bea
\mbox{for all } \ma\in(0,1)\,, \ \mu\left\{|E_{\mu_N,s}^s(\Xb_n)-E_{\mu,s}^s(\Xb_n)|>\ma\, \CR(\Xb_n) \right\} < 2\,\e1^{-2\,N\ma^2} \,,
\eea
showing the exponentially fast concentration of $E_{\mu_N,s}^s(\Xb_n)$ to its mean $E_{\mu,s}^s(\Xb_n)$ as $N$ increases. When $\mu$ has unbounded support, depending on its tail properties, other concentration inequalities can also be derived.
\fin
\end{remark}

\subsection{Quantisation error of random quantisers}

Let $\Rb_n=\{\xb_1,\ldots,\xb_n\}$ denote a random $n$-point set generated with a probability measure $\Px$ on $\RR^{n\times d}$. Our objective is to choose $\Px$ so that the expected $(\mu,s)$-distortion, $D_{\mu,s}(\Px)$ of Definition~\ref{D:quantisation-error}, is minimum.

From \eqref{moment-ds-cdf}, we can write
\be\label{Ls-mean-meandcdf}
D_{\mu,s}(\Px) =  s \int_{t\geq 0} t^{s-1}[1-F_n(t;\Px,\mu)]\,\dd t
\ee
where $F_n(\cdot\,; \Px,\mu) = \Ex_\Px\{F(t;\Rb_n,\mu) \}$ is the mean distance c.d.f.\ of Definition~\ref{D:distance-cdf}; that is, the c.d.f.\ of the random variable $d(U,\Rb_n)$ where both $U$ and $\Rb_n$ are random. The expected $(\mu,s)$-distortion $D_{\mu,s}(\Px)$ is the $(\mu,s)$-distortion for the c.d.f.\ $F_n(\cdot\,; \Px,\mu)$. In this paper we focus on distortion and quantisation error, but we could have also considered the $\mg$-quantiles of $F_n(\cdot\,; \Px,\mu)$; see Remark~\ref{R:quantiles}.
Note that Jensen inequality yields the following upper bound on the expected quantisation error: $\Ex_\Px\{ E_{\mu,s}(\Rb_n) \}\leq [D_{\mu,s}(\Px)]^{1/s}$ for $s \geq 1$.

By swapping the order of expectations, we get
\be
F_n(t; \Px,\mu)  &=& \Ex_\Px \left\{\Ex_\mu \left\{\Ind_{\{\ub\in\RR^d:\,d(\ub,\Rb_n)\leq t\}}(U) \right\}\right\} \nonumber \\
&=&  \Ex_\mu \left\{\Ex_\Px\left\{\Ind_{\{\ub\in\RR^d:\,d(\ub,\Rb_n)\leq t\}}(U) \right\}\right\} \nonumber \\
&=& \label{eq:prod_partial_cdf} \Ex_\mu  \left\{ F_n(t; \Px,\delta_U) \right\}\,,
\ee
where $F_n(t; \Px,\delta_\ub) = \Px  \left\{ d(\ub,\Rb_n)\leq t  \right\}$.

In the following, we consider random $n$-point set $\Rb_n$ with i.i.d.\ points having the distribution $\mathbb{P}$, so that $\Px(\dd\xb_1,\ldots,\dd\xb_n)=\prod_{i=1}^n \PP(\dd\xb_i)$; we shall denote this distribution $\Px=\PP^{[n]}$.
In Sections~\ref{S:PP-distribution-d2-Xspherical} and \ref{S:F-distribution-d2-Xspherical} we show that $F_n(t; \PP^{[n]},\mu)$ can be expressed in the form of a double integral when $\PP$ is spherically symmetric. This will rely on the following property which is true for any $\PP$.

For any fixed $\ub  \in \RR^d$, any $t \geq 0$ and any $\Rb_n$ with i.i.d.\ points having the arbitrary distribution $\PP$, we have
\be
F_n(t; \PP^{[n]},\delta_\ub) &=& 1-\prod_{j=1}^n \PP\left\{ \ub  \notin \SB_d({\xb}_j,t)  \right\} \nonumber \\
&& \hspace{-3cm} = \, 1-\prod_{j=1}^n\left(1- \PP  \left\{ \ub  \in \SB_d({\xb}_j,t)  \right\} \right) = 1-\bigg(1-\PP \left\{ \|X - \ub\| \leq t \right\} \bigg)^n\,,
\label{eq:prod_partial}
\ee
where $X \simd \PP$.

The expression \eqref{eq:prod_partial} indicates that the calculations of the mean distance c.d.f.\ $F_n(t; \PP^{[n]},\mu)$
and then of the expected $(\mu,s)$-distortion require the calculation of the probability
\bea 
\PP \left\{ \|X - \ub\| \leq t \right\} = \PP \left\{\ub    \in \SB_d(X,t)  \right\}\,,
\eea
which depends on $\PP$, $\ub $ and $r$. It is a consequence of the exchange of order of integration in \eqref{eq:prod_partial_cdf} that we have to consider another distance c.d.f., where now $\ub$ is fixed and plays the role of a one-point quantiser, whereas $X$ is random. Of course, this symmetry vanishes when considering random $n$-point sets, and the presence of $n$ i.i.d.\ points is accounted for explicitly in \eqref{eq:prod_partial}.
It is noticeable that this is the only place where the size $n$ appears.

\subsection{$\PP \left\{ \|X- \ub\| \leq t \right\}$ when $\PP$ is spherically symmetric}\label{S:PP-distribution-d2-Xspherical}

By definition, the random vector $X=(X_1, \ldots,X_d)$ is spherically symmetric if it can be represented as $X \eqd  R \cdot Z^{(d)} $, where $R=\|X\|$, $Z^{(d)}=(Z_1,\ldots,Z_d)$ is uniformly distributed on the unit sphere $\SS_{d-1}(1)$, and $R$ and $Z^{(d)}$ are independent.

When $\PP$ is spherically symmetric the distribution of $\|X  - \ub \|$ for $X\simd \PP$ only depends on $\|\ub\|$ and $d$. This distribution is specified in Proposition~\ref{th:dist} below. In the following, $\beta_{a,b}$ denotes a random variable with the Beta-density
\bea
\varphi_{a,b}(t)=t^{a-1}(1-t)^{b-1}/B(a,b) \ \mbox{ for } 0\leq t \leq 1\,,
\eea
where $B(a,b)$ is the Beta-function and $a,b>0$. The c.d.f.\ of $\beta_{a,b}$ is $\Pr\{\beta_{a,b} \leq t\}=I_t(a,b)$, where $I_t(a,b)$ is the regularised incomplete Beta-function, for which we use the convention
\be\label{convention}
I_t(\cdot,\cdot)=   \left \{\begin{array}{ll}
    0 &\;\;  {\rm for}\;\; t\leq 0 \\
  1 & \;\; {\rm for}\;\; t \geq 1 \, .\\
  \end{array}
\right.
\ee
The following lemma (see, e.g., \cite[Sect.~2.2]{FangKN90}) will be used several times.

\begin{lemma}\label{L:projections-beta}
Let $X=(X_1,\ldots,X_d) \eqd  R \cdot Z^{(d)}$ be a spherically symmetric random vector in $\RR^d$. Then, for any $m\in\{1,\ldots,d-1\}$, we have
\bea
X^{(m)}=(X_1,\ldots,X_m) \eqd  R \cdot \sqrt{\beta_{m/2,(d-m)/2}}\cdot Z^{(m)} \,,
\eea
where $R=\|X\|$, $\beta_{m/2,(d-m)/2}$ and $Z^{(m)}$ are independent.  Moreover, the joint density of $V^{(m)}=X^{(m)}/R$ is
\bea
\varphi_m(v_1,\ldots,v_m)= \frac{\Gamma(d/2)}{\pi^{m/2}\,\Gamma((d-m)/2)} \, \left(1-\sum_{i=1}^m v_i^2 \right)^{(d-m)/2-1} \ \mbox{ for } \sum_{i=1}^m v_i^2\leq 1\,.
\eea
\end{lemma}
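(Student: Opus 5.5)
Since $X=R\cdot Z^{(d)}$ with $R$ independent of $Z^{(d)}$ and $X^{(m)}=R\cdot (Z^{(d)})^{(m)}$, the first claim reduces to the case $R\equiv 1$. That is, it suffices to show that the first $m$ coordinates of $Z^{(d)}$ satisfy $(Z_1,\ldots,Z_m)\eqd \sqrt{\beta_{m/2,(d-m)/2}}\cdot Z^{(m)}$, with $\beta_{m/2,(d-m)/2}$ and $Z^{(m)}$ independent. Independence from $R$ is then inherited, and the factorisation $X^{(m)}/R=(Z_1,\ldots,Z_m)$ gives the second claim once the density of $(Z_1,\ldots,Z_m)$ is computed.

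To handle $Z^{(d)}$ I will use the Gaussian representation. Let $G=(G_1,\ldots,G_d)$ be standard normal, so that $Z^{(d)}\eqd G/\|G\|$. Write $G=(G',G'')$ with $G'\in\RR^m$ and $G''\in\RR^{d-m}$, which are independent. Then
\[
(Z_1,\ldots,Z_m)\eqd \frac{G'}{\|G\|}=\frac{\|G'\|}{\|G\|}\cdot\frac{G'}{\|G'\|}.
\]
Set $A=\|G'\|^2\sim\chi^2_m$ and $B=\|G''\|^2\sim\chi^2_{d-m}$, which are independent. The direction $G'/\|G'\|$ is uniform on $\SS_{m-1}(1)$ and independent of $\|G'\|$, hence independent of the pair $(A,B)$. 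By the classical Gamma/Beta relation, $A/(A+B)\sim\beta_{m/2,(d-m)/2}$. This yields the representation with the required independence.

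For the density, I will compute the law of $V=\sqrt{\beta}\,Z^{(m)}$ using polar coordinates in $\RR^m$. The radius $\rho=\sqrt{\beta}$ has density $2\rho\,\varphi_{m/2,(d-m)/2}(\rho^2)$. Dividing by the surface area $2\pi^{m/2}\rho^{m-1}/\Gamma(m/2)$ of $\SS_{m-1}(\rho)$ gives, for $\|v\|<1$,
\[
\varphi_m(v)=\frac{\Gamma(m/2)}{\pi^{m/2}}\cdot\frac{\Gamma(d/2)}{\Gamma(m/2)\Gamma((d-m)/2)}\,(1-\|v\|^2)^{(d-m)/2-1}.
\]
The factors $\Gamma(m/2)$ cancel, leaving the stated constant.

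The only delicate point is the independence of the direction $G'/\|G'\|$ from $(A,B)$ jointly. This holds because $G'$ is independent of $G''$, and within $G'$ the direction and norm are independent by rotation invariance of the standard normal law. The case $m=d-1$ requires no special treatment, since the exponent $-1/2$ is still integrable.
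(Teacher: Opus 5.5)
Your argument is correct. It is essentially the standard proof behind the result the paper only cites (Fang, Kotz and Ng, Sect.~2.2): the Gaussian representation $Z^{(d)}\eqd G/\|G\|$, the Beta law of the ratio $\|G'\|^2/\|G\|^2$ with the direction $G'/\|G'\|$ independent of $(\|G'\|,G'')$, and polar coordinates in $\RR^m$ for the density, with the constants checking out.

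The only implicit assumption, which the statement itself also makes, is $\Pr(R=0)=0$. This is needed so that $V^{(m)}=X^{(m)}/R$ is well defined and inherits the law of $(Z_1,\ldots,Z_m)$ through the measurable map $x\mapsto x^{(m)}/\|x\|$.
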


The next proposition is the key element for the derivation of exact and asymptotic results on the behaviours of the mean distance c.d.f.\ and the $(\mu,s$)-distortion.

\begin{proposition} \label{th:dist}
Assume that $d\geq 2$ and $X\simd\PP$ with $\PP$ spherically symmetric. Then, for any fixed $\ub \in \RR^d$ we have
\be
 \|X  - \ub \|^2 \eqd (\|\ub\|-R)^2 +4\,\|\ub\|\,R \, \beta_{\delta,\delta}\, , \label{eq:quantile2}
\ee
where $\delta=(d-1)/2$ and the random variables $R=\|X \|$ and $\beta_{\delta,\delta}$ are independent.

Moreover, when $\Rb_n\sim \Px=\PP_a^{[n]}$ with $\PP_a$ uniform on $\SS_{d-1}(a)$, $a\geq 0$, we have
\be\label{d2URn}
d^2(\ub,\Rb_n)\eqd (\|\ub\|-a)^2+4\,a\,\|\ub\|\,\zeta(n,d)\,,
\ee
where $\zeta(n,d)=\min_{i=1,\ldots,n} \zeta_i$ and the $\zeta_i\eqd \beta_{\delta,\delta}$ are i.i.d.
\end{proposition}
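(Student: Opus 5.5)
Expand the squared norm and use rotational invariance to reduce everything to a single coordinate. Write $X=R\,Z^{(d)}$ with $R=\|X\|$ independent of $Z^{(d)}$ uniform on $\SS_{d-1}(1)$. Then
\bea
\|X-\ub\|^2=R^2+\|\ub\|^2-2R\,\langle Z^{(d)},\ub\rangle\,.
\eea
If $\ub=\0b_d$ the claim is trivial, since the second term in \eqref{eq:quantile2} vanishes. Otherwise, by rotational invariance of $Z^{(d)}$, I choose an orthogonal $Q$ with $Q\ub=\|\ub\|e_1$. Then $\langle Z^{(d)},\ub\rangle\eqd\|\ub\|\,Z_1$, with $Z_1$ still independent of $R$.

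Next I identify the law of $Z_1$. Lemma~\ref{L:projections-beta} with $m=1$, applied to $Z^{(d)}$ itself (so $R=1$), gives $Z_1\eqd\sqrt{\beta_{1/2,(d-1)/2}}\,Z^{(1)}$ with $Z^{(1)}=\pm1$ equiprobable. Equivalently, $Z_1$ has density proportional to $(1-v^2)^{(d-3)/2}$ on $[-1,1]$.

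The key step is to show that $(1-Z_1)/2\eqd\beta_{\delta,\delta}$ with $\delta=(d-1)/2$. Put $W=(1-Z_1)/2$, so $v=1-2w$ and $1-v^2=4w(1-w)$. The density of $W$ is then proportional to $[w(1-w)]^{(d-3)/2}=w^{\delta-1}(1-w)^{\delta-1}$ on $[0,1]$, which is the $\beta_{\delta,\delta}$ density; normalisation is automatic. This needs $d\geq2$ so that $\delta>0$. It is the only step requiring care, namely the Jacobian and the normalisation, and it is a one-line change of variables.

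Substituting $Z_1=1-2W$ gives
\bea
\|X-\ub\|^2\eqd R^2+\|\ub\|^2-2R\|\ub\|+4R\|\ub\|\,W=(\|\ub\|-R)^2+4\|\ub\|R\,\beta_{\delta,\delta}\,,
\eea
with $R$ independent of $\beta_{\delta,\delta}$. This proves \eqref{eq:quantile2}.

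For \eqref{d2URn}, take $\PP_a$ so that $R\equiv a$. Each point gives $\|\xb_i-\ub\|^2\eqd(\|\ub\|-a)^2+4a\|\ub\|\zeta_i$ with $\zeta_i\eqd\beta_{\delta,\delta}$. The $\zeta_i$ are i.i.d. because each $\zeta_i=(1-\langle Z_i,\ub/\|\ub\|\rangle)/2$ is a fixed measurable function of its own point $\xb_i=aZ_i$, and the points are i.i.d. Since $x\mapsto(\|\ub\|-a)^2+4a\|\ub\|x$ is nondecreasing (as $a\|\ub\|\geq0$), the minimum over $i$ passes inside. Hence $d^2(\ub,\Rb_n)=\min_i\|\xb_i-\ub\|^2\eqd(\|\ub\|-a)^2+4a\|\ub\|\,\zeta(n,d)$. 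The degenerate cases $a=0$ or $\ub=\0b_d$ hold trivially.
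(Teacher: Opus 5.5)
Your proof is correct and follows the paper's route: rotate $\ub$ onto the first axis, expand the square, and use the density $\varphi_1$ of Lemma~\ref{L:projections-beta} to get $(1-Z_1)/2\eqd\beta_{\delta,\delta}$, then apply this to the $n$ i.i.d.\ points. Your explicit change of variables, the monotonicity argument that lets the minimum pass through the affine map, and the handling of the degenerate cases $a=0$ or $\ub=\0b_d$ supply details the paper leaves implicit.
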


\begin{proof}
As $X$ is spherically symmetric, the distribution of $\|X  - \ub \| $ only depends on $\ub $ through $r=\|\ub\|$, and without any loss of generality we can assume that $\ub=(r,0, \ldots, 0)$. We thus have
\bea 
 \|X \!-\! \ub \|^2
\eqd  (r\!-\!X_1)^2\!+\! \sum_{j=2}^d X_j^2 =  r^2\! -\! 2\,r X_1\!  +\!R^2= (R\!-\!r)^2 \!+\!2\,rR(1\!-\!Z_1)\,, \nonumber
\eea
where we have denoted $Z_1=X_1/R$. From Lemma~\ref{L:projections-beta}, $R$ and $Z_1$ are independent and the expression of $\varphi_1(\cdot)$ gives  $(1-Z_1)/2 \eqd  \beta_{\delta,\delta}$, which yields \eqref{eq:quantile2}.
Since the $n$ points of $\Rb_n$ are i.i.d.\ with $\PP_a$, the representation \eqref{eq:quantile2} yields \eqref{d2URn}.
\end{proof}

From \eqref{eq:quantile2},  for all $ t\geq 0$ we have
\bea
\PP \left\{ \| X \!-\!\ub \|\! \leq \! { t } \right\}\!&=&  \! \Pr  \left\{(R-r)^2 +4\,Rr \, \beta_{\delta,\delta}\,  \leq  { t ^2} \right\}  \,,
\eea
with $r=\|\ub\| > 0$, and thus
\bea
 \PP  \left\{ \| X \!-\!\ub \|\! \leq \! { t } \right\}&=&\Pr  \left\{\, \beta_{\delta,\delta}\,  \leq  \frac{t^2 -(R-r)^2 }{4\,Rr}  \right\}\,. \;\;
\eea
By conditioning on the random variable $R= \|X \|$ and denoting $\Phi(\cdot )$ the c.d.f.\ of $R=\|X \|$, we obtain
\be\label{beta_approx_for_cube}
\PP  \left\{ \| X \!-\!\ub \|\! \leq \! { t } \right\}\!&=& \int_{\rho\geq 0}
\Pr  \left\{\, \beta_{\delta,\delta}\,  \leq  \frac{t^2 -(\rho-r)^2 }{4\,\rho r}  \right\} \dd\Phi(\rho )  \nonumber \\
&=& \int_{\rho\geq 0} I_\upsilon(\delta,\delta) \,\dd\Phi(\rho ) \,,
\ee
where
\be\label{v}
\upsilon=\upsilon(t,\rho,r)=[t^2 -(\rho-r)^2]/(4 \rho r)
\ee
and $I_\upsilon(\cdot,\cdot )$ is the regularised incomplete beta-function with the added convention~\eqref{convention}.
The integration in \eqref{beta_approx_for_cube} is over the support of the distribution of $R=\|X \|$.
If $r=\|\ub\| = 0$, then \eqref{beta_approx_for_cube} reduces to
\bea
    \PP  \left\{ \| X\!-\!\ub \|\! \leq \! { t } \right\} = \PP  \left\{\|X\| \leq t \right\} = \int_{0}^{{t}} \dd\Phi(\rho) \,.
\eea

When $\PP=\PP_a$, the uniform distribution on the sphere $\SS_{d-1}(a)$, the expression \eqref{beta_approx_for_cube} with the convention \eqref{convention} can be simplified as follows:
\be\label{PPaa}
\PP_a\left\{ \| X-\ub \| \leq t \right\} =
\left\{\begin{array}{ll}
0                      & \mbox{if } t\leq |r-a| \,,\\
I_\upsilon(\delta,\delta) & \mbox{if } |r-a| \leq t \leq a+r \,, \\
1                      & \mbox{if } a+r \leq t \,,
\end{array} \right.
\ee
where $r=\|\ub\|$ and $\upsilon=\upsilon(t,a,r)$ is given by \eqref{v}.

\subsection{$F_n(t; \PP^{[n]},\mu)$ and $D_{\mu,s}(\PP^{[n]})$ when $\PP$ is spherically symmetric}\label{S:F-distribution-d2-Xspherical}

From \eqref{eq:prod_partial_cdf} and \eqref{eq:prod_partial}, the mean distance c.d.f.\ $F_n(\cdot\,; \PP^{[n]},\mu)$ can be calculated explicitly as
\be \label{eq:prod_partial_cdf4}
F_n(t; \PP^{[n]},\mu) = 1- \Ex_\mu \left\{ \bigg(1-\PP \left\{ \| X-U \| \leq t \right\} \bigg)^n \right\}\,,
\ee
where $U\simd \mu$, $X \simd \PP$, and $X$ and $U$ are independent.
When $\PP$ is spherically symmetric, $\PP \left\{ \|X- \ub \| \leq t \right\} $ is given by \eqref{beta_approx_for_cube} which only depends on $\ub$ through $r=\|\ub\|$, and the expectation with respect to $U$ in \eqref{eq:prod_partial_cdf4} is reduced to an expectation with respect to the distribution of $\|U\|$. The calculation of $F_n(t; \PP^{[n]},\mu)$ thus amounts to a double integration, with respect to the distributions of $\|X\|$ and $\|U\|$.
From \eqref{Ls-mean-meandcdf}, the calculation of $D_{\mu,s}(\PP^{[n]})$ requires an additional integration with respect to $t$,
\be
D_{\mu,s}(\PP^{[n]}) &=&  s \int_{t\geq 0} t^{s-1} \Ex_\mu \left\{ \bigg(1-\PP \left\{ \| X-U \| \leq t \right\} \bigg)^n \right\} \,\dd t \,, \nonumber \\
&=& s \int_{t\geq 0} t^{s-1} \int_{r\geq 0} H(r,t;\Phi,n)\, \dd\Psi(r)\,\dd t \,, \label{Ls-mean-final}
\ee
where
\bea 
H(r,t;\Phi,n) =\bigg(1-\int_{\rho\geq 0} I_\upsilon(\delta,\delta) \,\dd\Phi(\rho ) \bigg)^n \,,
\eea
with $\upsilon=\upsilon(t,\rho,r)$ given by \eqref{v}, $\Psi(\cdot)$ the c.d.f.\ of $\|U\|$ for $U\simd\mu$ and $\Phi(\cdot)$ the c.d.f.\ of $\|X\|$ for $X\simd\PP$.

The integrals required to compute $D_{\mu,s}(\PP^{[n]})$ can be evaluated with arbitrary precision. When $\PP$ is appropriately parameterised, a high-performance random quantiser can be obtained by numerically minimising the distortion with respect to its defining parameters. For the remainder of this paper, we focus on quantisers defined by a distribution $\PP$ that depends on a single scalar parameter: the radius of a sphere (Section~\ref{S:sphere}), the radius of a ball (Section~\ref{S:ball}), or the variance of a multivariate normal distribution (Section~\ref{S:normal}). The optimisation is performed using a derivative-free line-search method, specifically the golden-section algorithm.

%

\begin{remark}
The expression \eqref{Ls-mean-final} gives the expected $(\mu,s)$-distortion for random quantisers. Therefore, according to the paradigm of the so-called ``probabilistic method" (see, e.g., \cite{AlonS2000}), for any $\PP$ there exists at least one non-random $n$-point set $\Xb_n$ with $(\mu,s)$-distortion $E_{\mu,s}^s(\Xb_n)\leq D_{\mu,s}(\PP^{[n]})$. See Section~\ref{S:sphere-2^d} for an illustration involving full factorial designs.
\fin
\end{remark}



\section{Exact, non-asymptotic, results}\label{S:ball-sphere-annulus}

Throughout this section, three emblematic cases of  spherically symmetric measures are investigated. Nevertheless, the results remain valid for any measure $\nu$, regardless of its symmetr, so long as the radial component $r=\|U\|$ follows the same distribution under $\nu$ as under $\mu$. For instance, in Section~\ref{S:sphere} ($\mu$ uniform on $\SS_{d-1}(1)$), $\nu$ can be a point mass $\delta_\ub$ for any unit vector $\ub$; in Sections~\ref{S:ball} ($\mu$ uniform in $\SB_d(1)$) and~\ref{S:normal} ($\mu$ normal), $\nu$ may be supported on a line segment or a line, respectively, provided the distribution of $r$ is preserved.

\subsection{$\mu$ is uniform on the unit sphere $\SS_{d-1}(1)$} \label{S:sphere}

Here $\mu$ is the spherical measure (uniform on $\SS_{d-1}(1)$), and the distribution of $\|U\|$ for $U\simd\mu$ is the delta measure at $r=1$. When $n=1$ and $\Xb_n=\{\0b_d\}$, the distance c.d.f.\ $F(t;\Xb_n,\mu)$ equals 0 for $t<1$ and 1 for $t\geq 1$. For any other $\Xb_n$ it has a density $f(\cdot\,;\Xb_n,\mu)$ on $[0,\CR(\Xb_n)]$.

We first consider random quantisers with distribution $\PP=\PP_a$ uniform on $\SS_{d-1}(a)$  and minimise $D_{\mu,s}(\PP^{[n]}_a)$ with respect to $a\in[0,1]$. The expression \eqref{PPaa} can be simplified as follows:
\bea 
\PP_a\left\{ \| X-\ub \| \leq t \right\} =
\left\{\begin{array}{ll}
0                      & \mbox{if } t\leq 1-a \,,\\
I_\upsilon(\delta,\delta) & \mbox{if } 1-a \leq t \leq 1+a \,, \\
1                      & \mbox{if } 1+a \leq t \,,
\end{array} \right.
\eea
where $\upsilon=[t^2 -(1-a)^2]/(4 a)$. Formula~\eqref{Ls-mean-final} then gives
\be\label{Ds-sphere}
D_{\mu,s}(\PP^{[n]}_a) =  (1-a)^s + s \int_{1-a}^{1+a} t^{s-1} \left[1-I_\upsilon(\delta,\delta)\right]^n\,\dd t \,.
\ee
This exact expression for the expected $(\mu,s)$-distortion depends on $d,n,s$ and $a$; the value $a^*=a^*(d,n,s)$ that minimises $D_{\mu,s}(\PP^{[n]}_a)$ with respect to $a$ can be obtained numerically with arbitrary precision for any given $d,n$ and $s$. In the special case $d=3$, $\delta=1$ and $I_v(1,1)=v$ for $v\in[0,1]$, so that $D_{\mu,s}(\PP^{[n]}_a)$ can be calculated explicitly for $s$ even. It is given by a polynomial in $a$ of degree $s$, with for example $D_{\mu,2}(\PP^{[n]}_a)=(1-a)^2+4\,a/(n+1)$ and $D_{\mu,4}(\PP^{[n]}_a)=(1-a)^4+8\,a[n(1-a)^2+2+2\,a^2]/[(n+1)(n+2)]$, which gives $a^*(3,n,2)=(n-1)/(n+1)$ and $a^*(3,n,4)$ as the root of a cubic equation, which satisfies $a^*(3,n,4)=1-4/n+16/n^2+\SO(1/n^3)$ as $n\to\infty$.

The left column of Figure~\ref{F:sphere} presents $a^*(d,n,s)$ as a function of $d$ for different $n$ and $s=1$ (top) and $s=10$ (bottom). For fixed $s$ and $d$, $a^*$ increases with $n$, and asymptotically, when $n$ tends to infinity, $a^*$ tends to 1 (see, e.g., \cite[Chap.~9]{GrafL2000}). Intuitively the choice $a=1$ seems natural for all $n$ and $s$. However, Figure~\ref{F:sphere} indicates that $a=1$ can be a poor choice, especially in high dimension. For any fixed $d$ and $n$, $a^*(d,n,s) \to 1$ when $s\to 0$ and is a decreasing function of $s$, with $a^*(d,n,\infty)=0$ if $n$ is small enough. Indeed, for $n \leq d$, $\CR(\Xb_n)\geq 1$ for any $\Xb_n$ and the inequality is strict if $\0b_n\not\in\Xb_n$. This follows from the fact that any $d$ points in $\RR^d$ belong to a $d$-dimensional hyperplane $\SH_d$; one of the two parts of $\SS_{d-1}(1)$ cut by $\SH_d$ necessarily contains a hemisphere whose pole is at distance at least one from each point. This implies that $a^*(d,n,s)=0$ for large enough $s$ when $n\leq d$. Also, for fixed $n$ and $s$, $a^*(d,n,s)$ decreases with $d$.

The central column of Figure~\ref{F:sphere} shows that, as expected, $D_{\mu,s}^{1/s}(\PP^{[n]}_{a^*})$ decreases with $n$ and increases with $d$ and $s$. The right column presents the efficiency of the naive quantiser with $n$ points i.i.d.\ on $\SS_{d-1}(1)$ relative to optimised random quantisers with $n$ points on $\SS_{d-1}(a^*)$. It shows that the benefit of using an appropriate $a$ is significant when $d$ is large or $n$ is small.

\begin{figure}[ht!]
\begin{center}
\includegraphics[width=.32\linewidth]{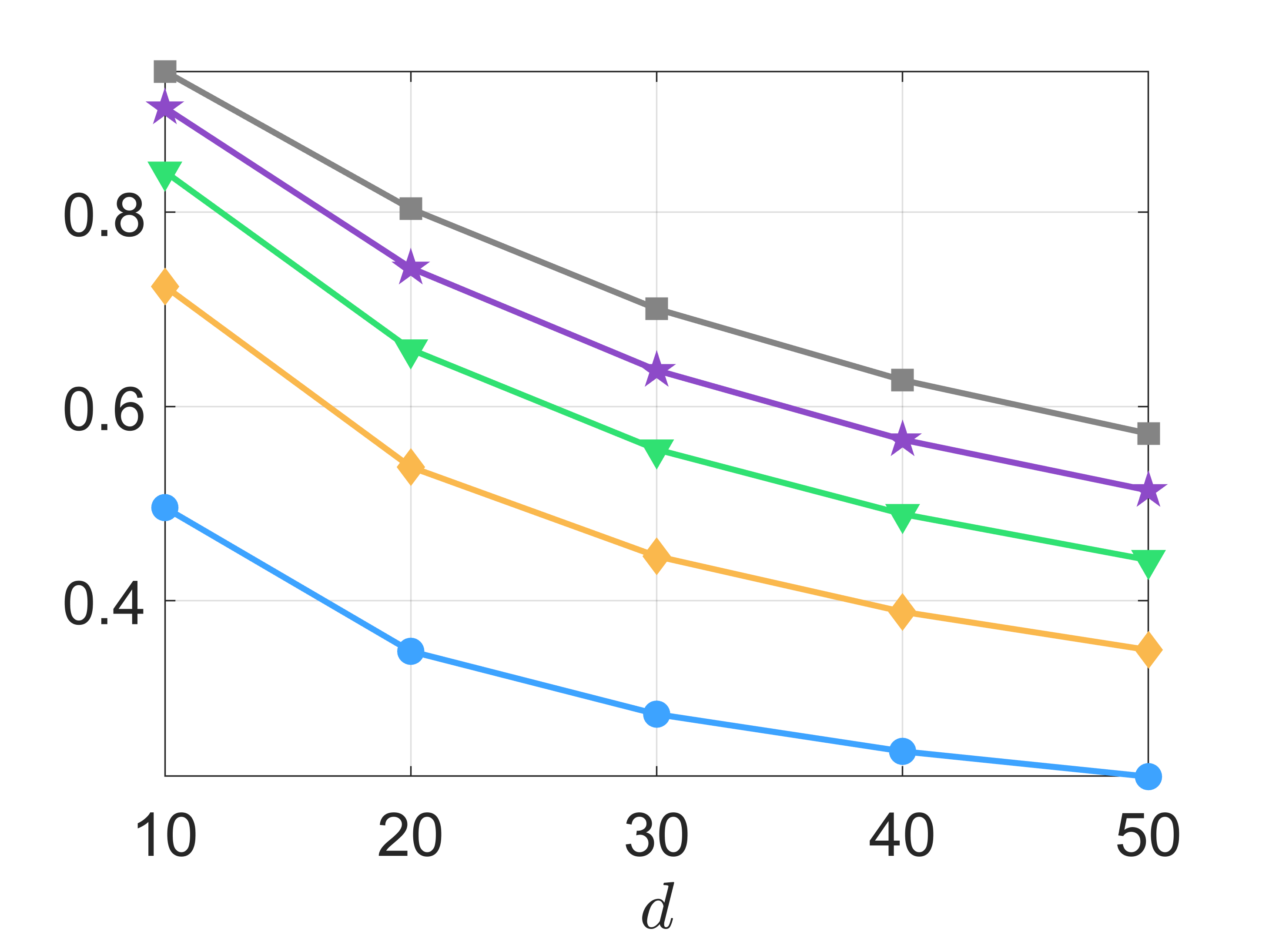}
\includegraphics[width=.32\linewidth]{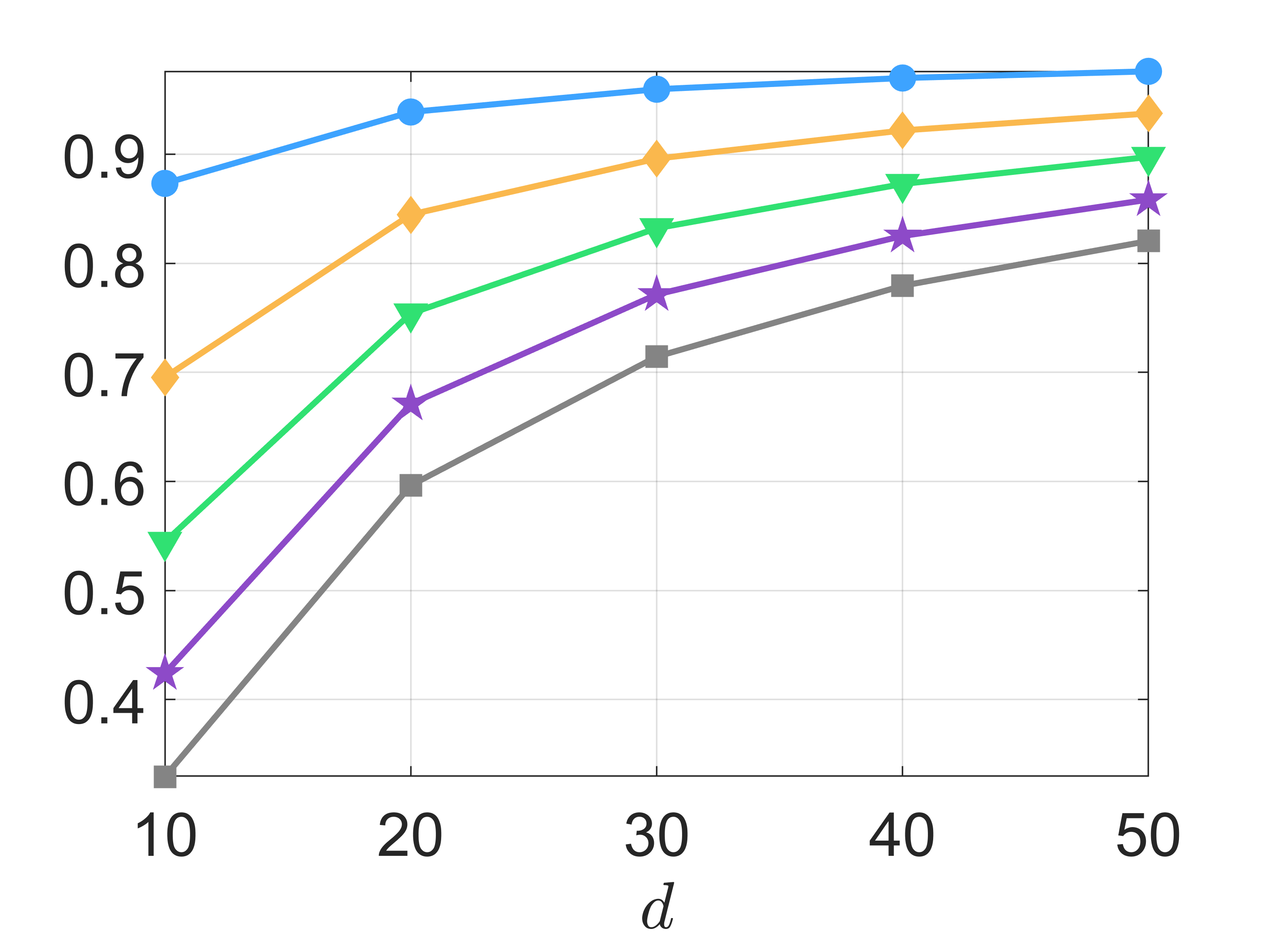}
\includegraphics[width=.32\linewidth]{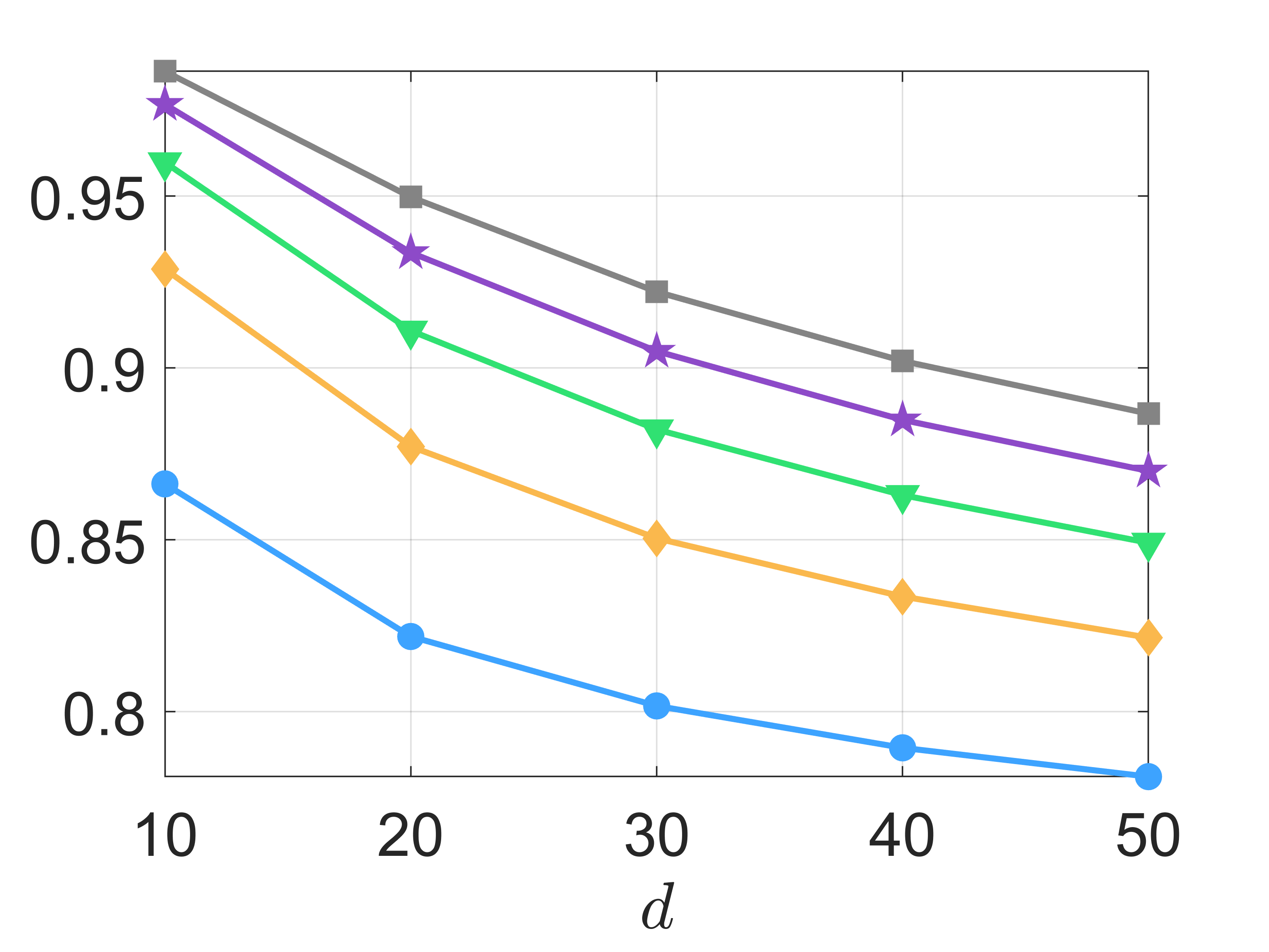} \\
\includegraphics[width=.32\linewidth]{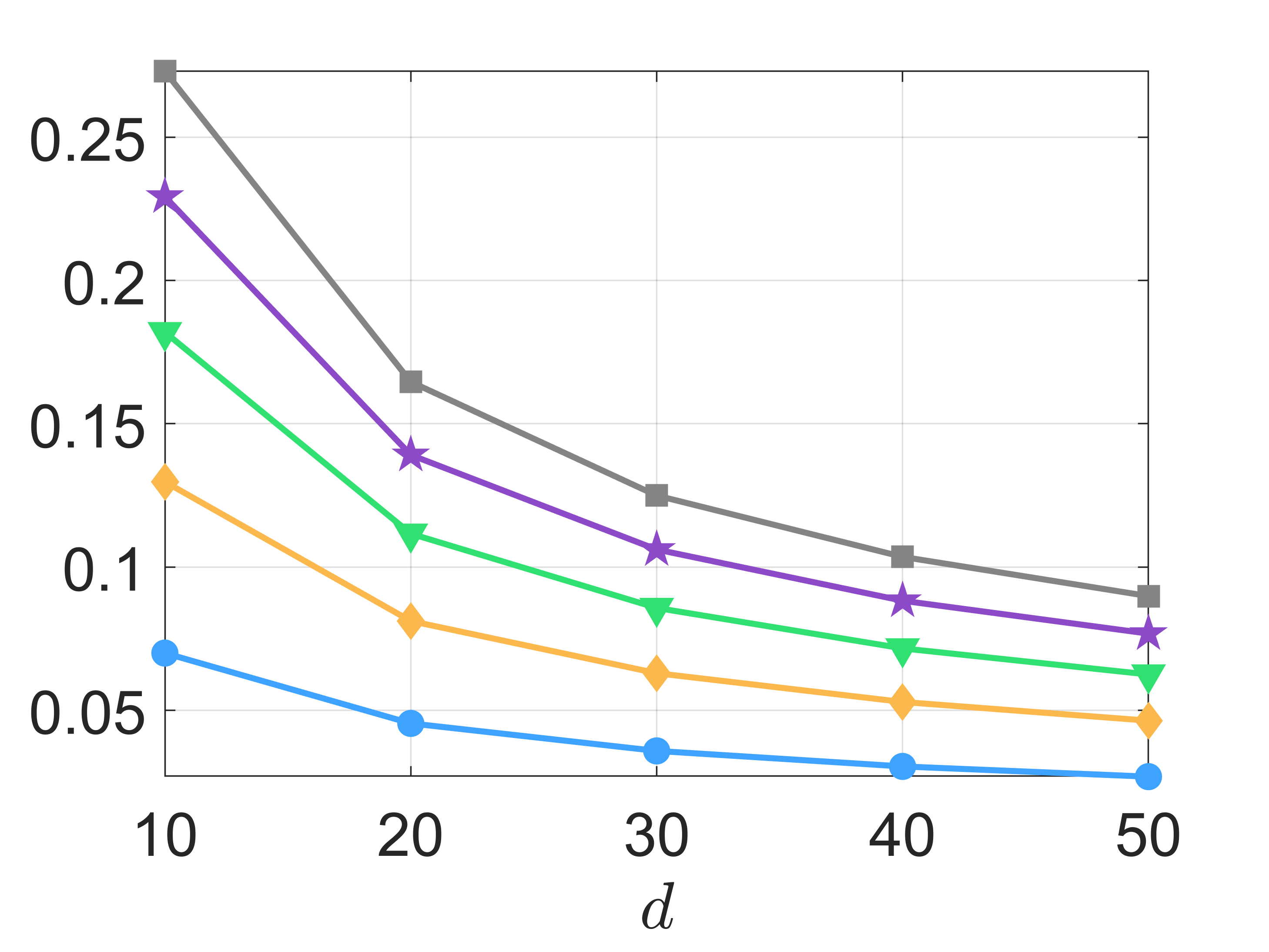}
\includegraphics[width=.32\linewidth]{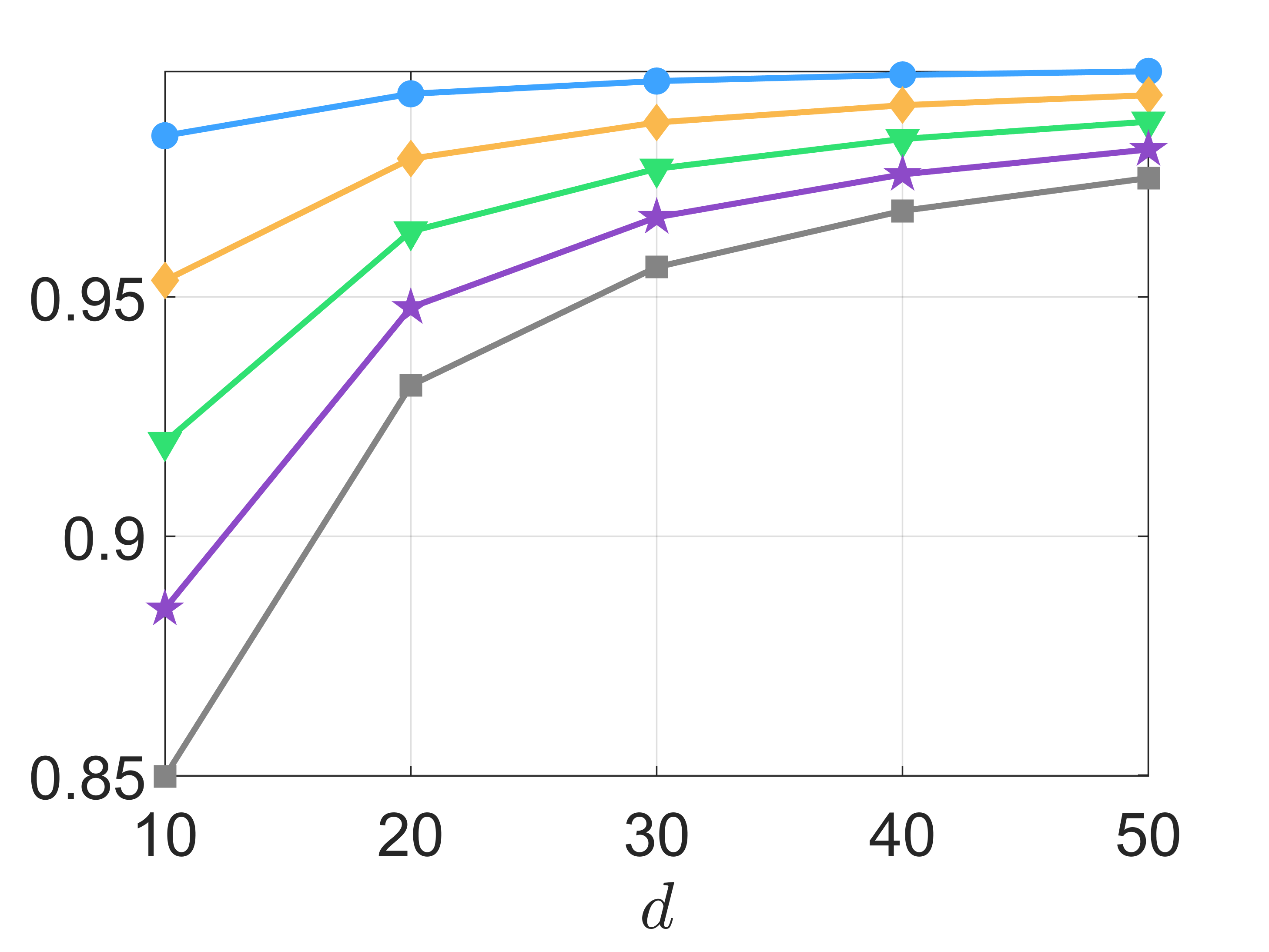}
\includegraphics[width=.32\linewidth]{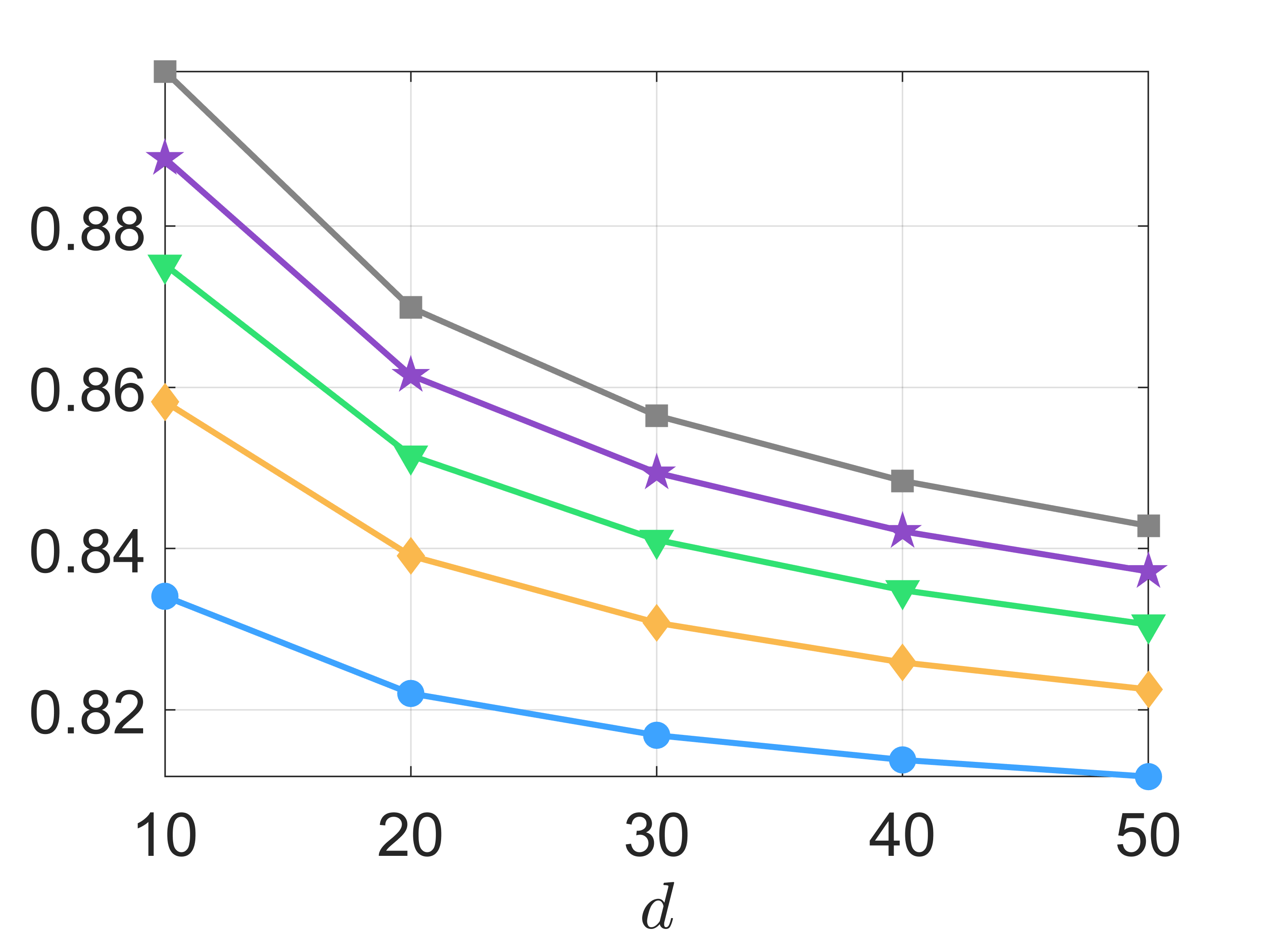}
\end{center}
\caption{\small Sphere. Value $a^*$ minimising $D_{\mu,s}(\PP^{[n]}_a)$ (left column), $D_{\mu,s}^{1/s}(\PP^{[n]}_{a^*})$ (central column) and ratio $D_{\mu,s}^{1/s}(\PP^{[n]}_{a^*})/D_{\mu,s}^{1/s}(\PP^{[n]}_1)$ (right column) as functions of $d$ for $s=1$ (top row) and $s=10$ (bottom row) and different values of $n$: $n=10$ ({\color{Cerulean} $\bullet$}), $n=10^2$ ({\color{Dandelion} $\blacklozenge$}), $n=10^3$ ({\color{green} $\blacktriangledown$}), $n=10^4$ ({\color{Orchid} $\bigstar$}), and $n=10^5$ ({\color{gray} {\tiny $\blacksquare$}}).}
\label{F:sphere}
\end{figure}

\vsp
The next figure (Figure~\ref{F:sphere2}, where $s=10$, $d=10$) shows that adding a delta measure at $\0b_d$ to $\PP_1$ may also yield a decrease of the expected distortion for small~$n$. Denote $\QQ_{\ma,a}=\ma\,\PP_a+(1-\ma)\,\delta_{\0b_d}$. Similarly to \eqref{Ds-sphere}, we obtain
\bea
D_{\mu,s}(\QQ^{[n]}_{\ma,a})  &=&  (1-a)^s + s \int_{1-a}^1 t^{s-1} \left[1-\ma\, I_\upsilon(\delta,\delta)\right]^n\,\dd t \\
&& + s\,\ma^n \int_1^{1+a} t^{s-1} \left[1- I_\upsilon(\delta,\delta)\right]^n\,\dd t \,,
\eea
with $\upsilon=[t^2 -(1-a)^2]/(4 a)$.
The left panel presents the optimal $\ma^*$ minimising $D_{\mu,s}(\QQ^{[n]}_{\ma,1})$ as a function of $n$: unsurprisingly, $\ma^*$ quickly approaches 1 as $n$ increases. One may observe that, for all $n$, $(1-\ma^*)>1/n$ (it corresponds to the probability that a realisation of a random set $\Rb_n$ contains a point at $\0b_d$).
Next, for a given $n$ ($n=20$), we use the corresponding optimal $\ma^*$ minimising $D_{\mu,s}(\QQ^{[n]}_{\ma,1})$ ($\ma^*\simeq 0.846$) and plot $D_{\mu,s}^{1/s}(\QQ^{[n]}_{\ma^*,a})$ and $D_{\mu,s}^{1/s}(\QQ^{[n]}_{1,a})=D_{\mu,s}^{1/s}(\PP^{[n]}_a)$ as functions of $a$ (central panel): the effect of introducing a delta measure at $\0b_d$ is significant for $a$ close to one, but choosing a suitable radius $a$ has bigger impact. Finally, we consider the efficiencies of random quantisers with $\QQ_{\ma^*,1}$ with respect to quantisers distributed with $\PP_1$ and
$\PP_{a^*}$ (with $\ma^*$ and $a^*$ depending on $n$) as functions of $n$ (right panel): when $a$ is set to 1, the choice of a suitable $\ma$ has some positive effect for small $n$, but the impact of choosing a suitable $a$ is stronger for all $n$.

\begin{figure}[ht!]
\begin{center}
\includegraphics[width=.32\linewidth]{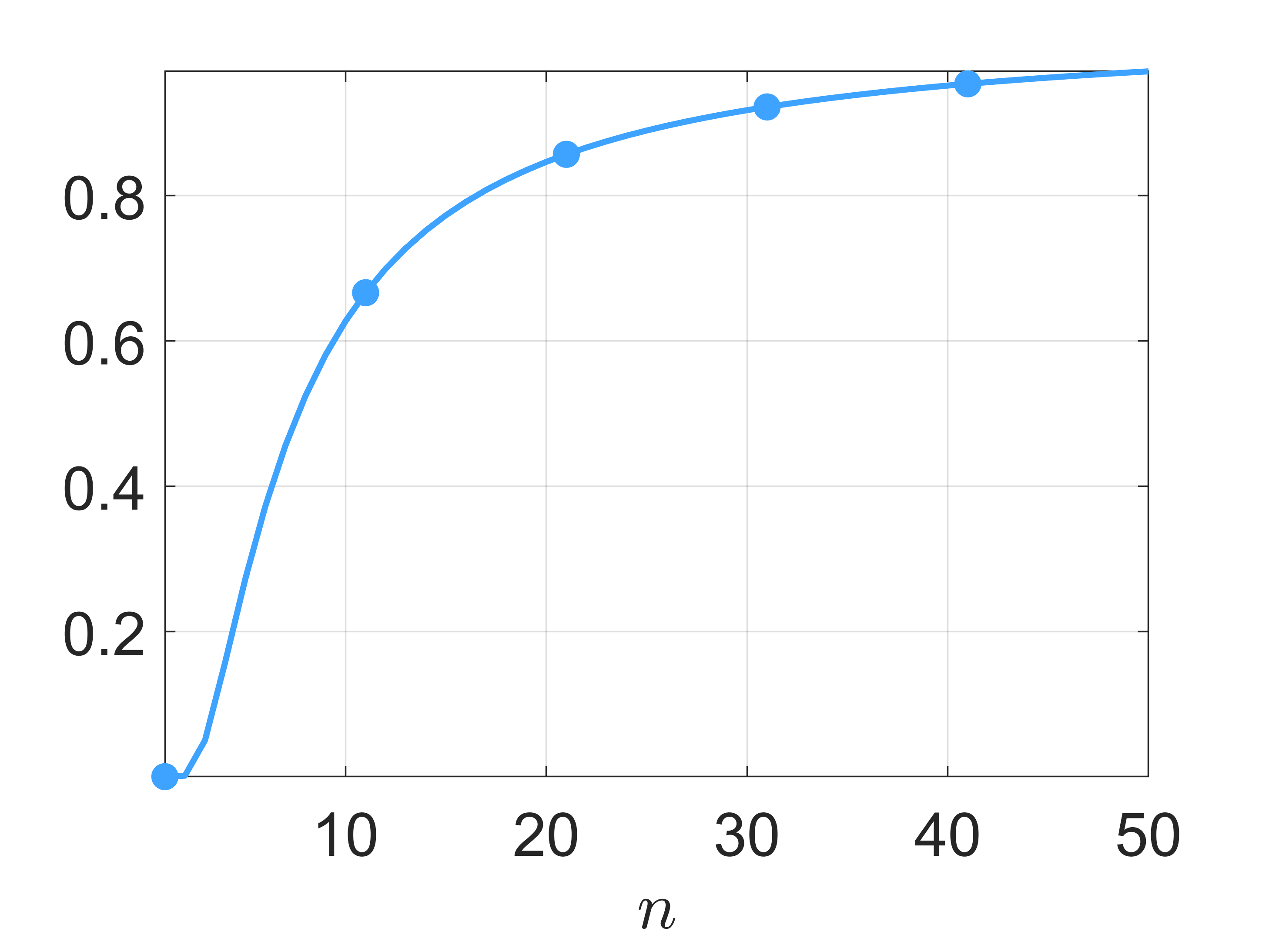}
\includegraphics[width=.32\linewidth]{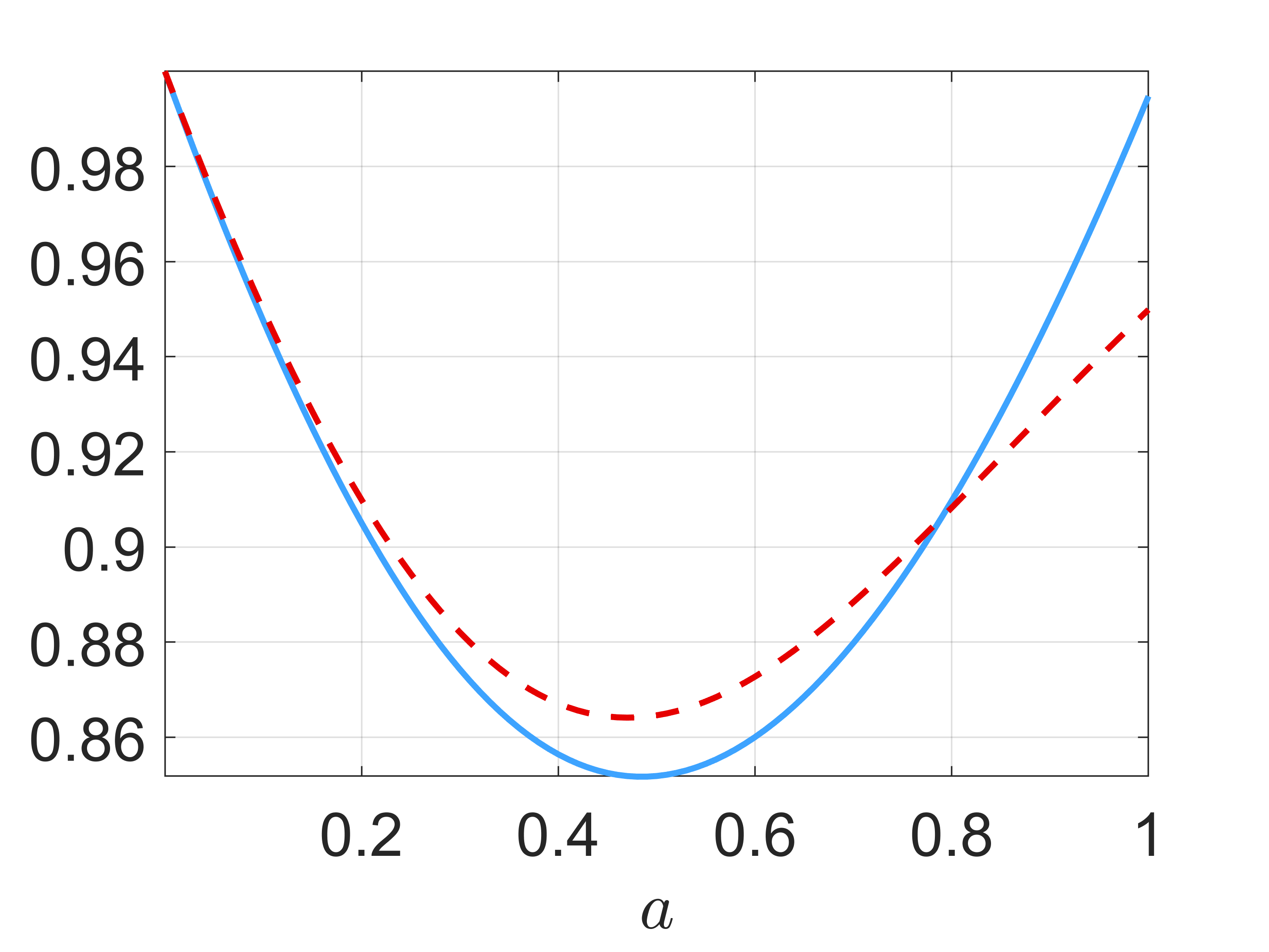}
\includegraphics[width=.32\linewidth]{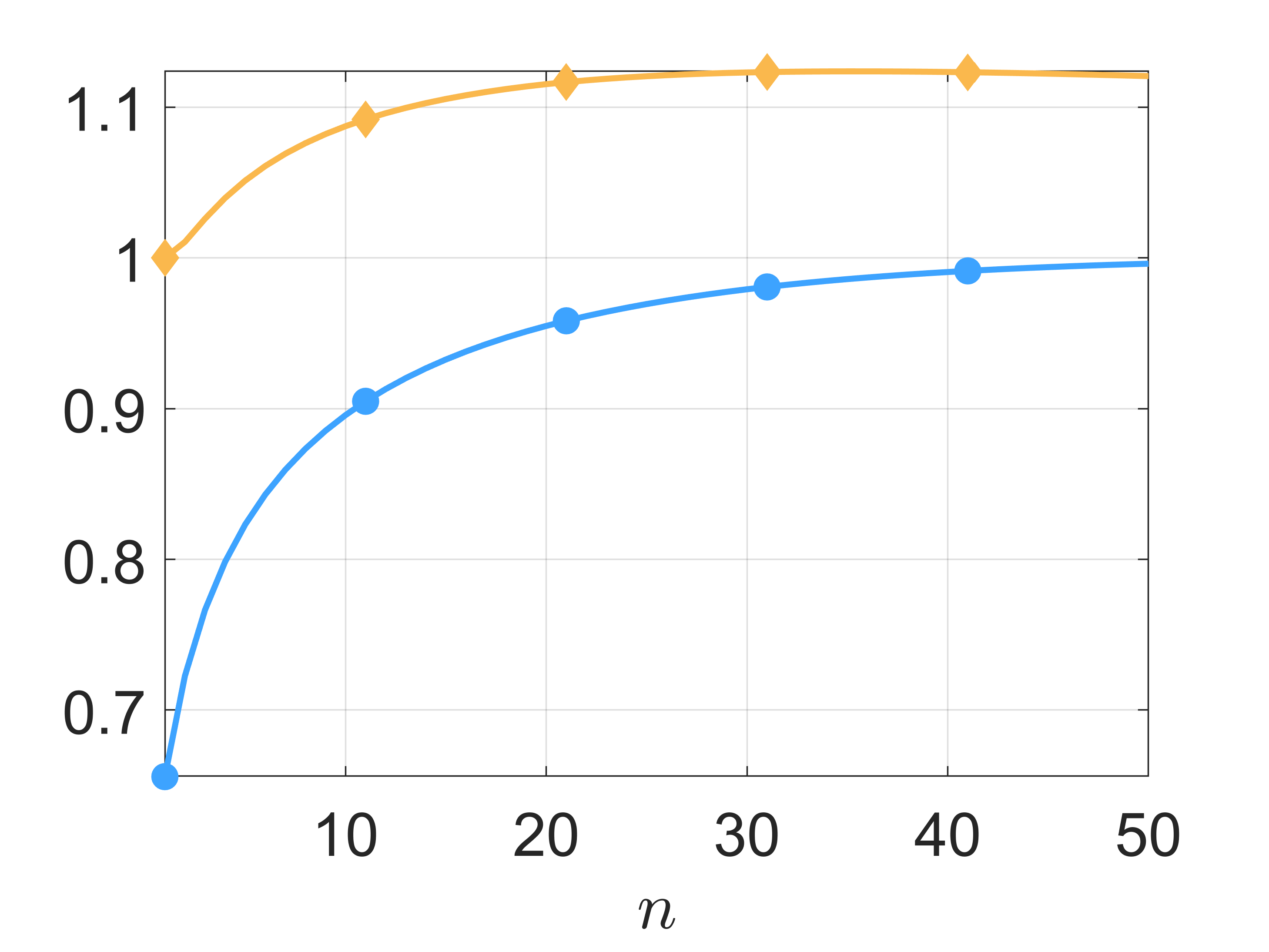} \\
\end{center}
\caption{\small Left: value $\ma^*$ minimising $D_{\mu,s}(\QQ^{[n]}_{\ma,1})$ as function of $n$. Center: $D_{\mu,s}^{1/s}(\QQ^{[n]}_{\ma^*,a})$ ({\color{red} {\bf - - -}}) and $D_{\mu,s}^{1/s}(\QQ^{[n]}_{1,a})$ ({\color{Cerulean} {\bf ---}}) as functions of $a$ for $n=20$ and $\ma^* = 0.846$. Right: $D_{\mu,s}^{1/s}(\QQ^{[n]}_{\ma^*,1})/D_{\mu,s}^{1/s}(\PP^{[n]}_1)$ ({\color{Cerulean} $\bullet$}) and $D_{\mu,s}^{1/s}(\QQ^{[n]}_{\ma^*,1})/D_{\mu,s}^{1/s}(\PP^{[n]}_{a^*})$ ({\color{Dandelion} $\blacklozenge$}) as functions of $n$; $s=10$, $d=10$.}
\label{F:sphere2}
\end{figure}

\subsection{Comparison between random quantisers uniform on a sphere and optimised full factorial $2^d$ designs} \label{S:sphere-2^d}
Let $n=2^d$ and $\Xb_n(b)$ be the full factorial $2^d$ design with points $\xb_i$ having coordinates $\pm b$, $b\in(0,1]$: the $\xb_i$ are vertices of the cube $[-b,b]^d$; they also belong to the sphere $\SS_{d-1}(a)$ with $a=\sqrt{d}\,b$. For $\SX=\SS_{d-1}(1)$, the Voronoi regions $\SV(\xb_i)=\{\xb\in\SX: \|\xb-\xb_i\| = \min_{\xb_j\in\Xb_n(b)} \|\xb-\xb_j\|\}$ are all identical up to a permutation of coordinates and do not depend on the value of $b$.

Consider first the case $s=2$. For $\mu$ uniform on $\SX=\SS_{d-1}(1)$, $E_{2,\mu}^2[\Xb_n(b)]=\Ex\{\|V-\bb\|^2\}$ where $\bb=(b,\ldots,b)\in\RR^d$ and $V=(V_1,\ldots,V_d)$ is uniform in $\SV(\bb)=\SS_{d-1}(1)\cap \RR_+^d$. Therefore
\bea
E_{2,\mu}^2[\Xb_n(b)]= \Ex\{ \|V\|^2 - 2\,b\sum_{i=1}^d  V_i + d b^2\} = 1 - 2\,bd\, \Ex\{V_1\} + db^2 \,.
\eea
Minimisation of $E_{2,\mu}^2[\Xb_n(b)]$ with respect to $b$ yields $b_2^*=\Ex\{V_1\}$ and the optimal $(\mu,2)$-distortion of a $2^d$ factorial design is $E_{2,\mu}^2[\Xb_n(b_2^*)]=1-d (b_2^*)^2$.
For $U^{(d)}=(U_1,\ldots,U_d)\simd \mu$, the first component $U_1$ has the density $\varphi_1(\cdot)$ of Lemma~\ref{L:projections-beta} on $[-1,1]$, therefore, $V_1$ has the density $2\,\varphi_1(\cdot)$ on $[0,1]$, so that
\bea
b_2^* =\Ex\{V_1\} &=& \frac{\Gamma(d/2)}{\sqrt{\pi}\,\Gamma((d+1)/2)}\,, \\
E_{2,\mu}^2[\Xb_n(b_2^*)] &=& 1 - \frac{d}{\pi} \left(\frac{\Gamma(d/2)}{\Gamma((d+1)/2)}\right)^2 \,.
\eea

More generally, the analytic expression for the $(\mu,s)$-distortion $E_{\mu,s}^s[\Xb_n(b)]$ can be derived for any even $s$, but the calculations become more complicated as $s$ increases. For $s=4$ we obtain
\bea
E_{4,\mu}^4[\Xb_n(b)] &=& \Ex\{\|V-\bb\|^4\} \\
&=& 1 +4\,b^2\, \Ex\left\{\left(\sum_{i=1}^d V_i \right)^2\right\} + d^2 b^4 + 2\,d b^2 - 4\, bd\, \Ex\{V_1\} (1+d b^2) \,.
\eea
Using $\Ex\{(\sum_{i=1}^d V_i)^2\}=1+d(d-1) \Ex\{V_1V_2\}$, where $\Ex\{V_1V_2\}=2/(\pi d)$ from $\varphi_2(\cdot,\cdot)$ of Lemma~\ref{L:projections-beta}, together with the expression above for $\Ex\{V_1\}$, we can express $E_{4,\mu}^4[\Xb_n(b)]$ as a fourth-degree polynomial in $b$ with coefficients depending on $d$. The optimal value $b_4^*$ can be obtained explicitly.
The values of $b_2^*$ and $b_4^*$ are very close for large $d$ as $a_j^*=\sqrt{d}\,b_j^*=\sqrt{2/\pi}+\SO(1/d)$ for $j=2,4$.

For $s=\infty$, $E_{\infty,\mu}[\Xb_n(b_\infty^*)]=\CR[\Xb_n(b)]$, the covering radius of $\Xb_n(b)$, with $(\CR[\Xb_n(b)])^2=1+d\,b^2-2\,b$ being minimum for $b_\infty^*=1/d$, which gives
$\CR[\Xb_n(b_\infty^*)]=\sqrt{1-1/d}$.

Figure~\ref{F:sphere3} compares the optimal full factorial designs $\Xb_n(b_s^*)$ for $s=2$ and 4 with optimised random quantisers $\PP_{a^*}^{[n]}$ on $\SS_{d-1}(1)$ with the same sample size $n=2^d$. The left panel shows the values of the radii of the various spheres on which the points lie, as functions of $d$. The right panel presents $D_{\mu,s}^{1/s}(\PP^{[n]}_{a^*})$ for the random quantisers and $E_{\mu,s}[\Xb_n(b_s^*)]$ for the full factorial designs, for $s=2$ and 4. For both types of point sets, the behaviours for $s=2$ and $s=4$ are similar. As $d$ tends to infinity, for the full factorial designs $a_s^*=\sqrt{d}\,b_s^*$ tends to the limit $\sqrt{2/\pi}\simeq 0.797885$ from above. For random quantisers, we observe that $a^*(s)=a^*(d,n,s)$ defined in Section~\ref{S:sphere} tends from below to a larger limiting value---we shall see in Section~\ref{S:extreme-value-sphere} that this limiting value equals $\sqrt{3}/2 \simeq 0.8660$.
The expected $(\mu,s)$-distortion of random quantisers shows little sensitivity to the choice of $a$ in the neighborhood of $a^*(s)$.
In particular, the plots of $D_{\mu,s}^{1/s}(\PP^{[n]}_{a_s^*})$ (not shown) and $D_{\mu,s}^{1/s}(\PP^{[n]}_{a^*(s)})$ almost coincide for $d>3$.

For both values of $s$, $a_s^* \approx a^*(s)$ for $d=7$. This dimension is the critical one at which random quantisers start exhibiting a smaller quantisation error than full factorial designs. As a consequence of the probabilistic method, it implies that for all $d\geq 7$ there exist non-random quantisers with $n=2^d$ that have smaller quantisation errors for $s=2$ and 4 than optimised full factorial designs.

\begin{figure}[ht!]
\begin{center}
\includegraphics[width=.49\linewidth]{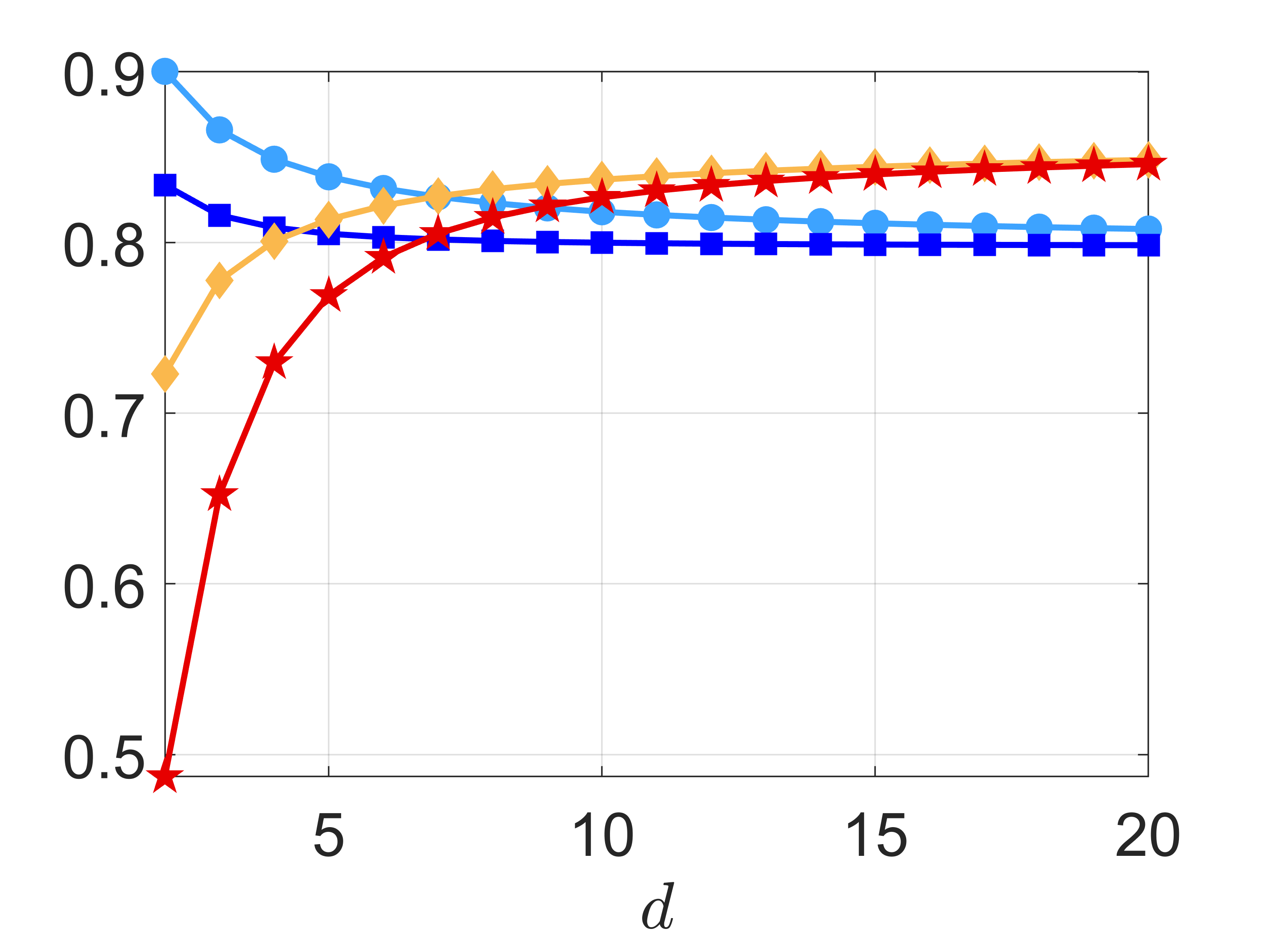}
\includegraphics[width=.49\linewidth]{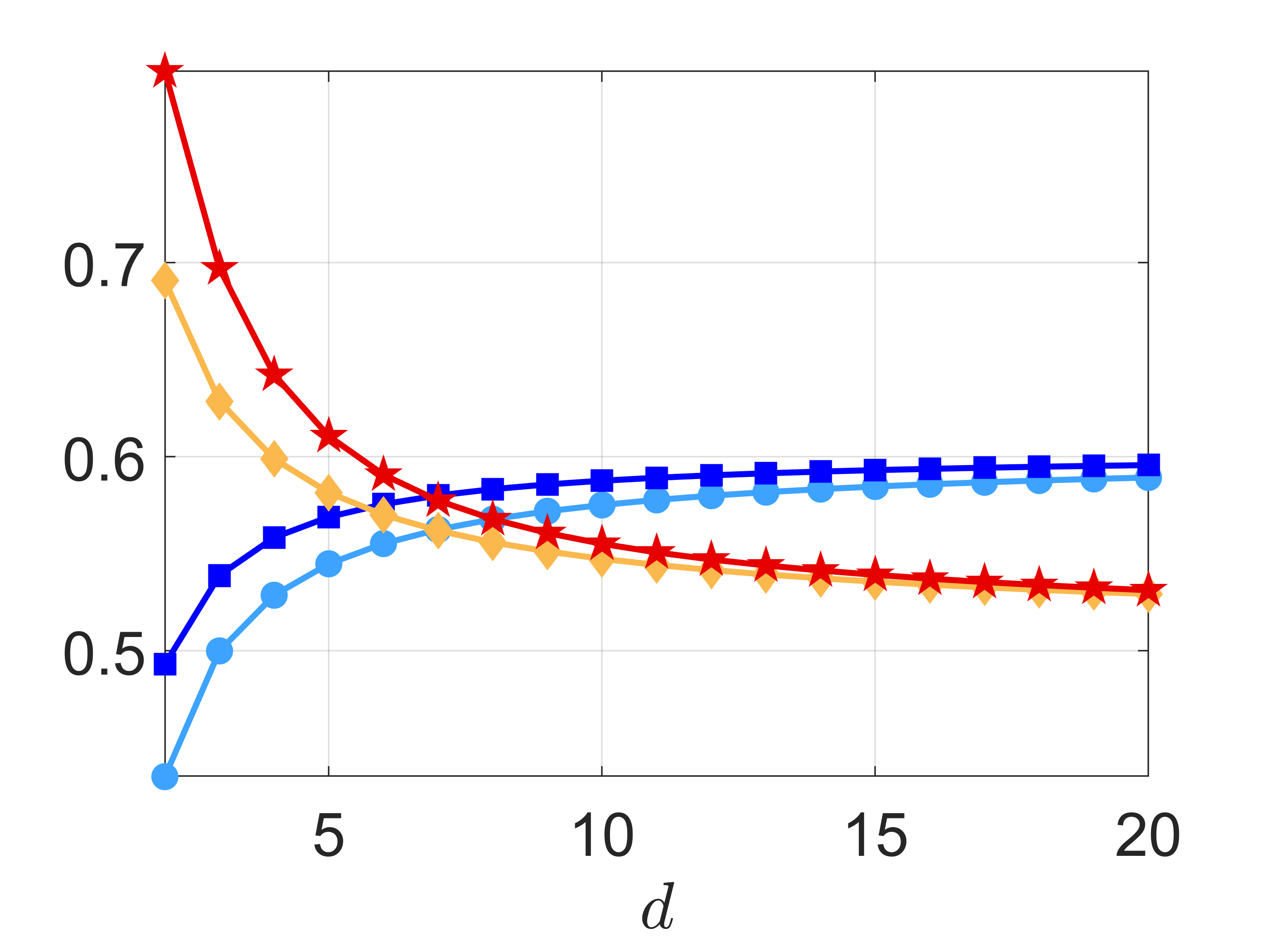} \\
\end{center}
\caption{\small Left: Values $a^*(s)$ minimising $D_{\mu,s}(\PP^{[n]}_a)$ for $n=2^d$ with $s=2$ ({\color{Dandelion} $\blacklozenge$}) and $s=4$ ({\color{red} $\bigstar$}), and values $a_s^*=\sqrt{d}\,b_s^*$ minimising $E_{\mu,s}[\Xb_n(b)]$ for $s=2$ ({\color{Cerulean} $\bullet$}) and $s=4$ ({\color{blue} {\tiny $\blacksquare$}}), as functions of $d$.
Right: $D_{\mu,s}^{1/s}(\PP^{[n]}_{a^*})$ for $n=2^d$ with $a^*=a^*(d,n,s)$, $s=2$ ({\color{Dandelion} $\blacklozenge$}) and $s=4$ ({\color{red} $\bigstar$}), and values $E_{\mu,s}[\Xb_n(b_s^*)]$ for $s=2$ ({\color{Cerulean} $\bullet$}) and $s=4$ ({\color{blue} {\tiny $\blacksquare$}}), as functions of $d$.}
\label{F:sphere3}
\end{figure}

\subsection{$\mu$ is uniform in the unit ball $\SB_d(1)$} 
\label{S:ball}




For $\mu$ uniform in $\SB_d(1)$, the density of $r=\|U\|$ for $U\simd\mu$ is $\psi(\tau)=d\,\tau^{d-1}$.





Consider first the case where $\PP=\PP_a$ uniform on $\SS_{d-1}(a)$: \eqref{Ls-mean-final} gives
\be\label{DS-Pa}
D_{\mu,s}(\PP_a^{[n]}) =  s \int_{t\geq 0} t^{s-1} \int_{r\geq 0}  H(r,t;\Phi_a,n) \,\psi(r)\,\dd r \,\dd t \,,
\ee
where $H(r,t;\Phi_a,n) =\bigg(1-\PP_a \left\{ \| X-\ub \| \leq t \right\} \bigg)^n$ with $\Phi_a(\cdot)$ the c.d.f.\ of the delta measure $\delta_a$ and $r=\|\ub\|$, so that $\PP_a\{ \| X-\ub \| \leq t \}$ is given by \eqref{PPaa}.

We also consider random quantisers whose $n$ points are i.i.d.\ with $\PP=\PP_{0,b}$ uniform in $\SB_d(b)$.
The c.d.f.\ $\Phi(\cdot)$ of $R=\|X\|$ has then the density $\phi_{0,b}(\rho)=d\,\rho^{d-1}/b^d$ for $\rho\in[0,b]$.
More general spherically symmetric distributions $\PP$ could also be used, but we found that, unless $n$ is extremely small, the class of distributions $\PP_{0,b}$ is rich enough.
The expected $(\mu,s)$-distortion $D_{\mu,s}(\PP_{0,b}^{[n]})$ is given by \eqref{Ls-mean-final}
where
\bea
H(r,t;\Phi,n) =\bigg(1- \int_a^b I_\upsilon(\delta,\delta) \,\phi_{0,b}(\rho)\,\dd\rho \bigg)^n \,,
\eea
with $\upsilon=\upsilon(t,\rho,r)$ given by \eqref{v}.

Figure~\ref{F:ball} presents the same information as Figure~\ref{F:sphere} for the ball configuration: we consider optimal design defined by $\PP_{0,b^*}$, where $b^*=b^*(d,n,s)$ minimises $D_{\mu,s}(\PP_{0,b}^{[n]})$ and the right column compares performances of $\PP_{0,b^*}$ and $\PP_{0,1}$.

\begin{figure}[ht!]
\begin{center}
\includegraphics[width=.32\linewidth]{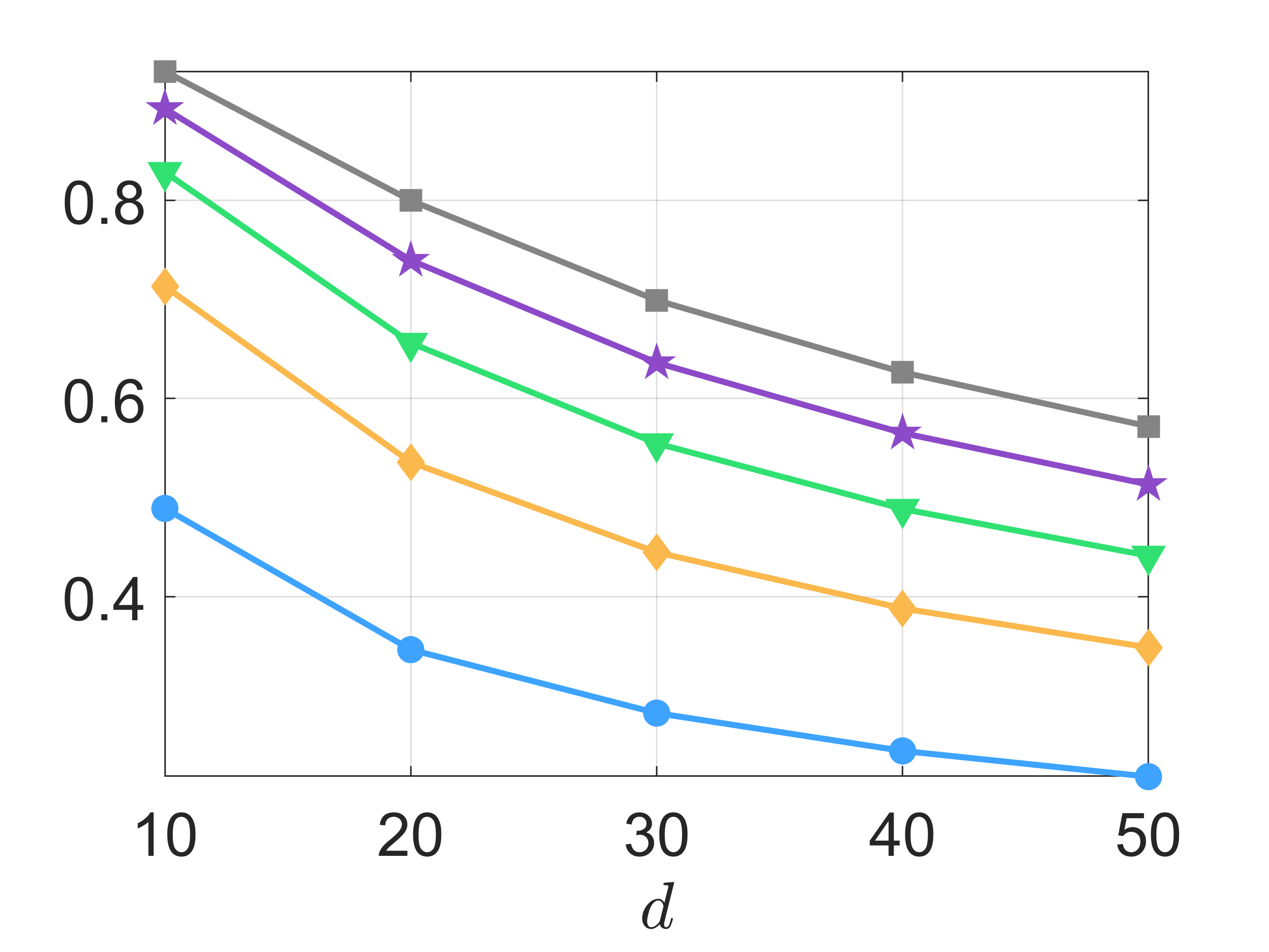}
\includegraphics[width=.32\linewidth]{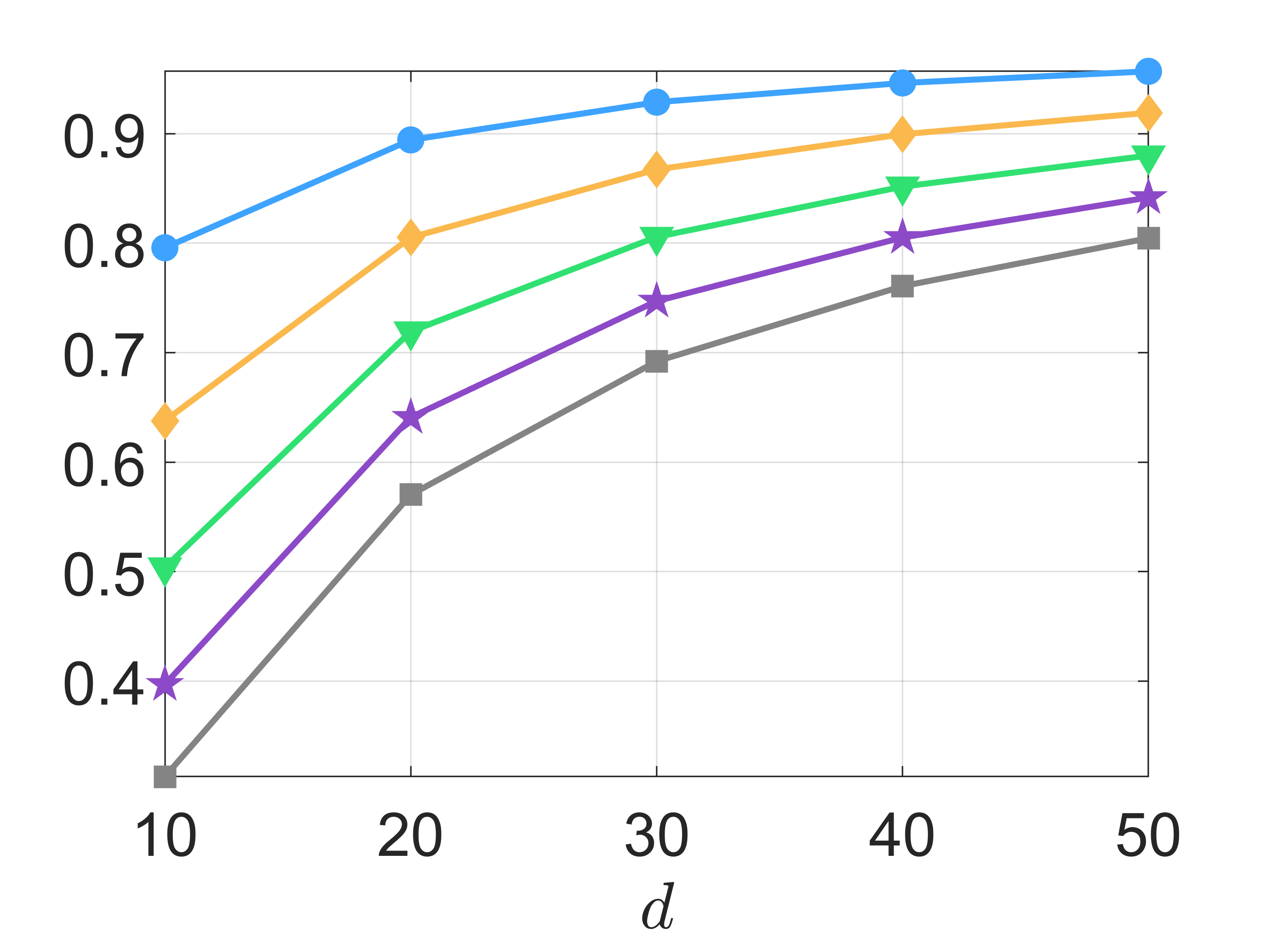}
\includegraphics[width=.32\linewidth]{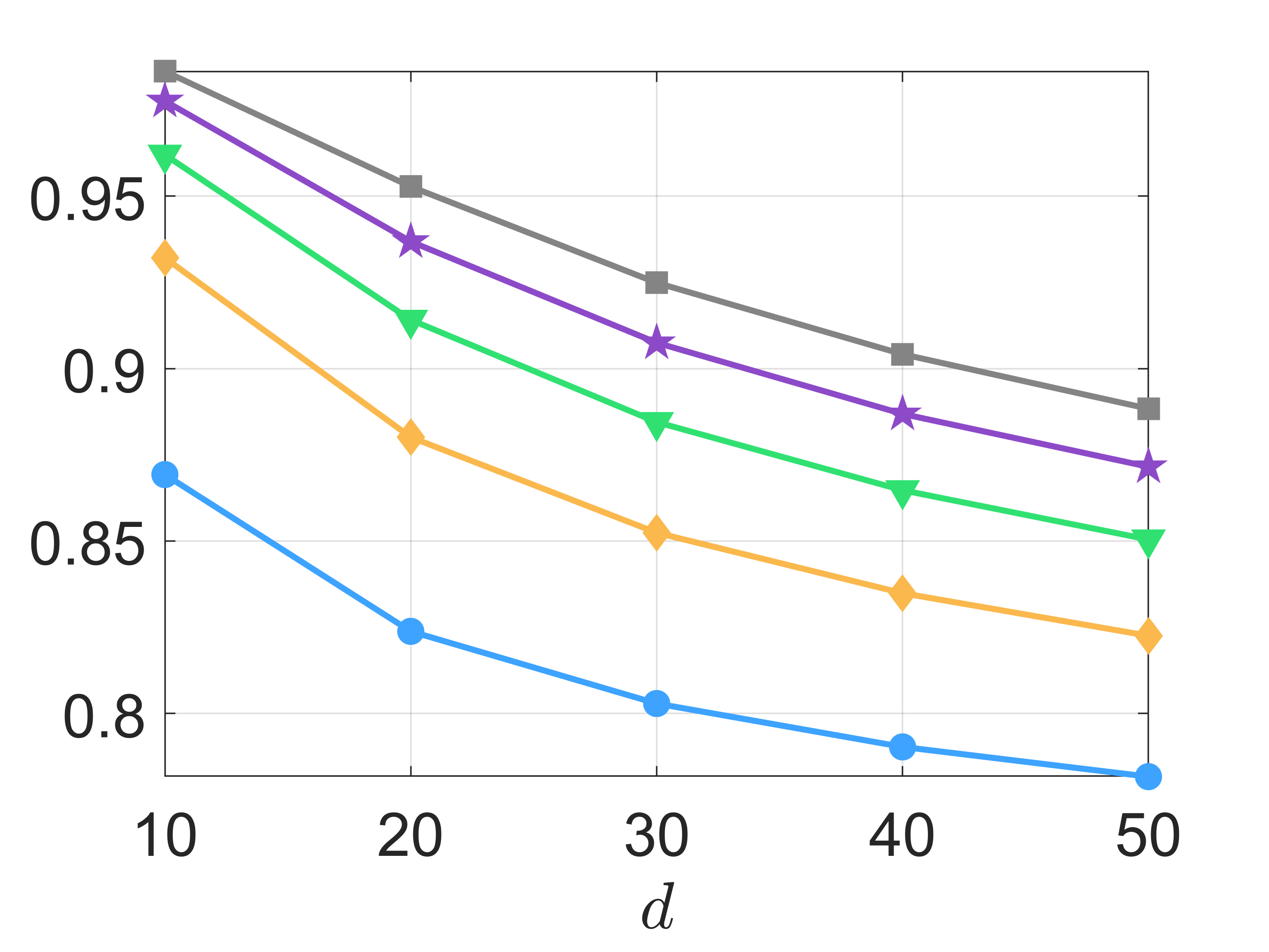} \\
\includegraphics[width=.32\linewidth]{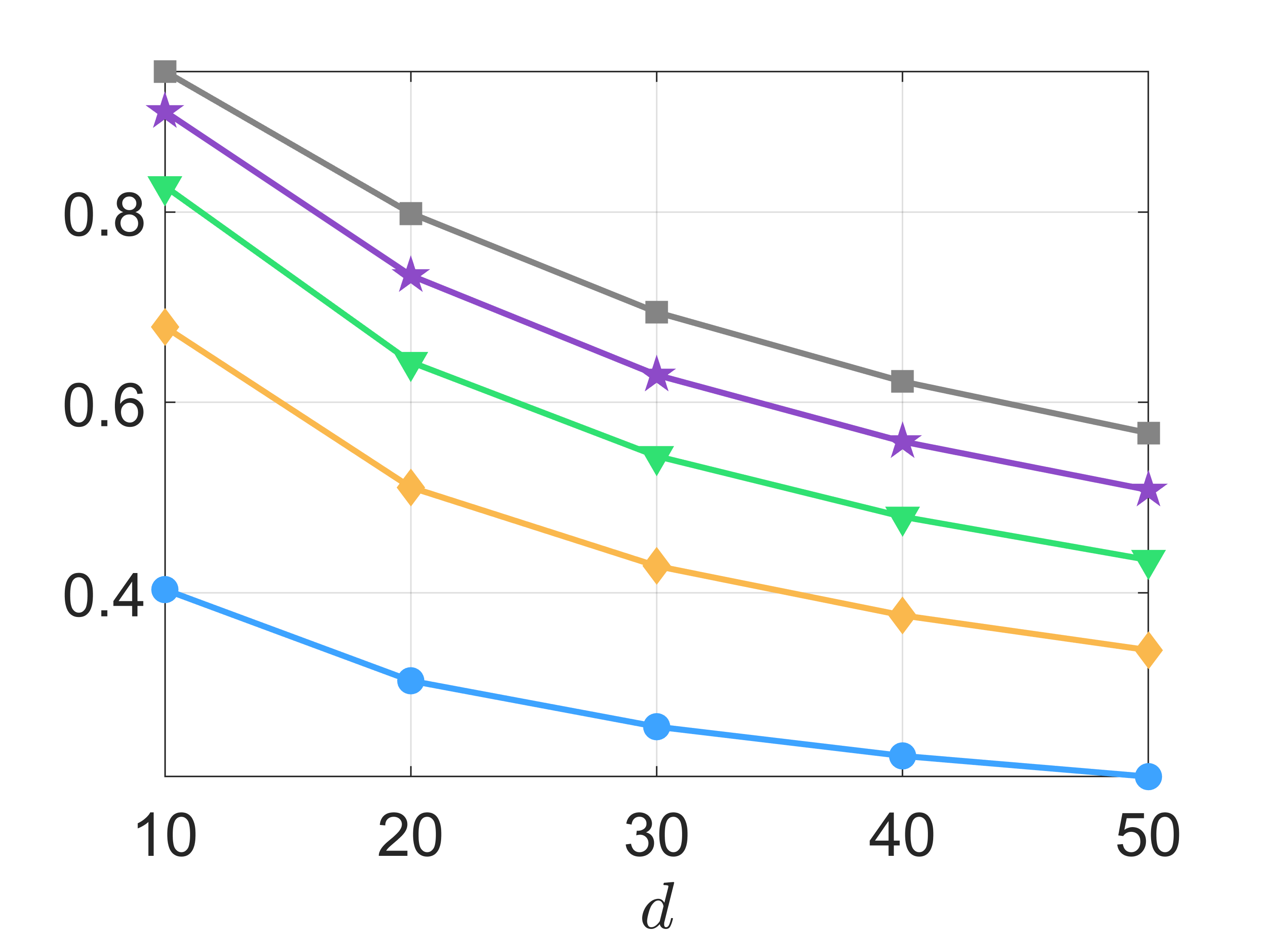}
\includegraphics[width=.32\linewidth]{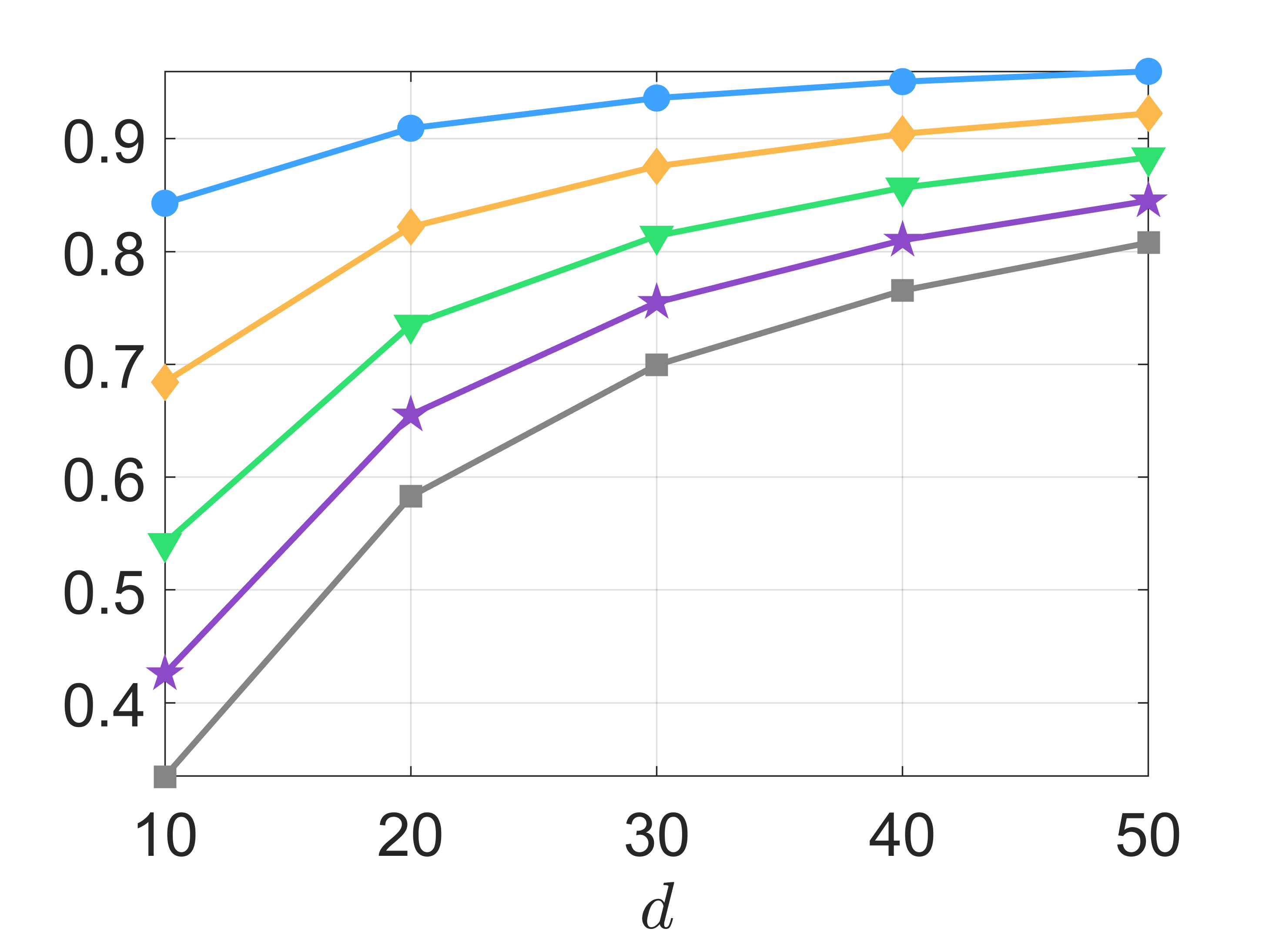}
\includegraphics[width=.32\linewidth]{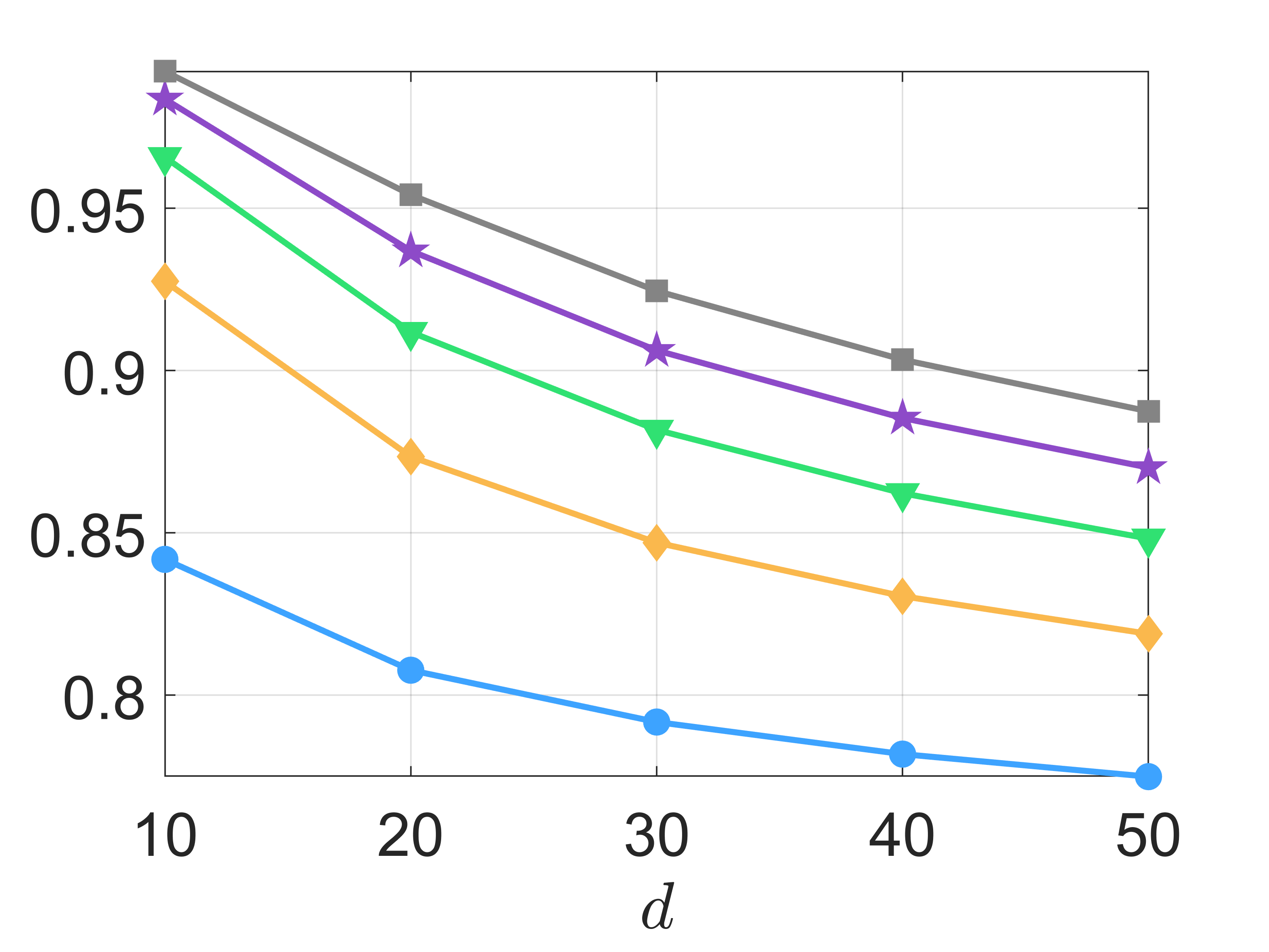}
\end{center}
\caption{\small Ball. Value $b^*$ minimising $D_{\mu,s}(\PP^{[n]}_{0,b})$ (left column), $D_{\mu,s}^{1/s}(\PP^{[n]}_{0,b^*})$ (central column) and ratio $D_{\mu,s}^{1/s}(\PP^{[n]}_{0,b^*})/D_{\mu,s}^{1/s}(\PP^{[n]}_{0,1})$ (right column) as functions of $d$ for $s=1$ (top row) and $s=10$ (bottom row) and different values of $n$: $n=10$ ({\color{Cerulean} $\bullet$}), $n=10^2$ ({\color{Dandelion} $\blacklozenge$}), $n=10^3$ ({\color{green} $\blacktriangledown$}), $n=10^4$ ({\color{Orchid} $\bigstar$}), and $n=10^5$ ({\color{gray} {\tiny $\blacksquare$}}).}
\label{F:ball}
\end{figure}

The quantities considered behave qualitatively as in Figure~\ref{F:sphere}.  Note, however, that $b^*$ is larger than $a^*$ plotted in Figure~\ref{F:sphere} (left columns---$b^*$ is much larger than $a^*$ on the bottom row with $s=10$), that $D_{\mu,s}(\PP_{0,b^*}^{[n]})$ is smaller than $D_{\mu,s}(\PP_{a^*}^{[n]})$ (central columns), and that the efficiency of a ``classical" random design compared to an optimised design is slightly larger for the ball than for the sphere (right columns). 
For a fixed dimension $d$, the empirical distribution of an optimal quantiser which minimises $E_{\mu,s}(\Xb_n)$ converges weakly to $\PP_{0,1}^{[n]}$ as $n \to \infty$ (see \cite[Table~7.1]{GrafL2000}). Consistent with this theoretical result, we empirically observe that the optimal parameters $b^*$ and $a^*$ both converge to 1 as $n \to \infty$; see also Section~\ref{S:extreme-value-ball} for further details on their asymptotic behaviour.

Figure~\ref{F:bstar_ALLastar_ball_s2_D} of Section~\ref{S:extreme-value-ball} displays the optimal values $b^*(d,n,s)$ and $a^*(d,n,s)$, respectively minimising $D_{\mu,s}(\PP_{0,b}^{[n]})$ and $D_{\mu,s}(\PP_a^{[n]})$, plotted as functions of $d$ for $s=2$ and two distinct values of $n$. The figure shows that $b^*>a^*$, and the gap $b^*-a^*$ narrows as $d$ increases, reflecting the fact that both $a^*$ and $b^*$ converge to zero as $d\to\infty$. The behaviour remains qualitatively unchanged for other values of $s$, especially when the dimension $d$ is large.


Given the observations in Figures~\ref{F:ball} and 
\ref{F:bstar_ALLastar_ball_s2_D}, two key patterns emerge:
(\textit{i}) for large dimensions $d$, the values of $a^*(d,n,s)$ and $b^*(d,n,s)$ become very close; (\textit{ii}) the quantiser $\PP_{0,b^*}^{[n]}$ significantly outperforms $\PP_{0,1}^{[n]}$ when $d$ is sufficiently large. Moreover, we also observe that $\PP_{a^*}^{[n]}$ outperforms $\PP_{0,b^*}^{[n]}$ for reasonable design sizes $n$; see Figure~\ref{F:ALLeff_ball_s2_D} in Section~\ref{S:extreme-value-ball}.
Since $b^*$ converges to 1 as $n \to \infty$, with $\PP_{0,1}^{[n]}$ being asymptotically optimal, a natural question arises: for a fixed dimension $d$, at what sample size $n=n^*(d,s)$ does $\PP_{a^*}^{[n]}$ stop being preferable to $\PP_{0,b^*}^{[n]}$? Empirical analysis reveals that this transition occurs only for extremely large values of $n$.
Define
\be\label{nstar-ball}
n^*(d,s) = \max\{n\in\NN: D_{\mu,s}(\PP_{a^*}^{[n]}) < D_{\mu,s}(\PP_{0,b^*}^{[n]}) \} \,.
\ee
The left panel of Figure~\ref{F:nstar-ball_s2-4-10} presents $n^*(d,s)$ (in log scale, obtained by dichotomous search) as a function of $d=3,\ldots,20$ for three values of $s$, indicating a superexponential increase of $n^*(d,s)$ with $d$.

\begin{figure}[ht!]
\begin{center}
\includegraphics[width=.49\linewidth]{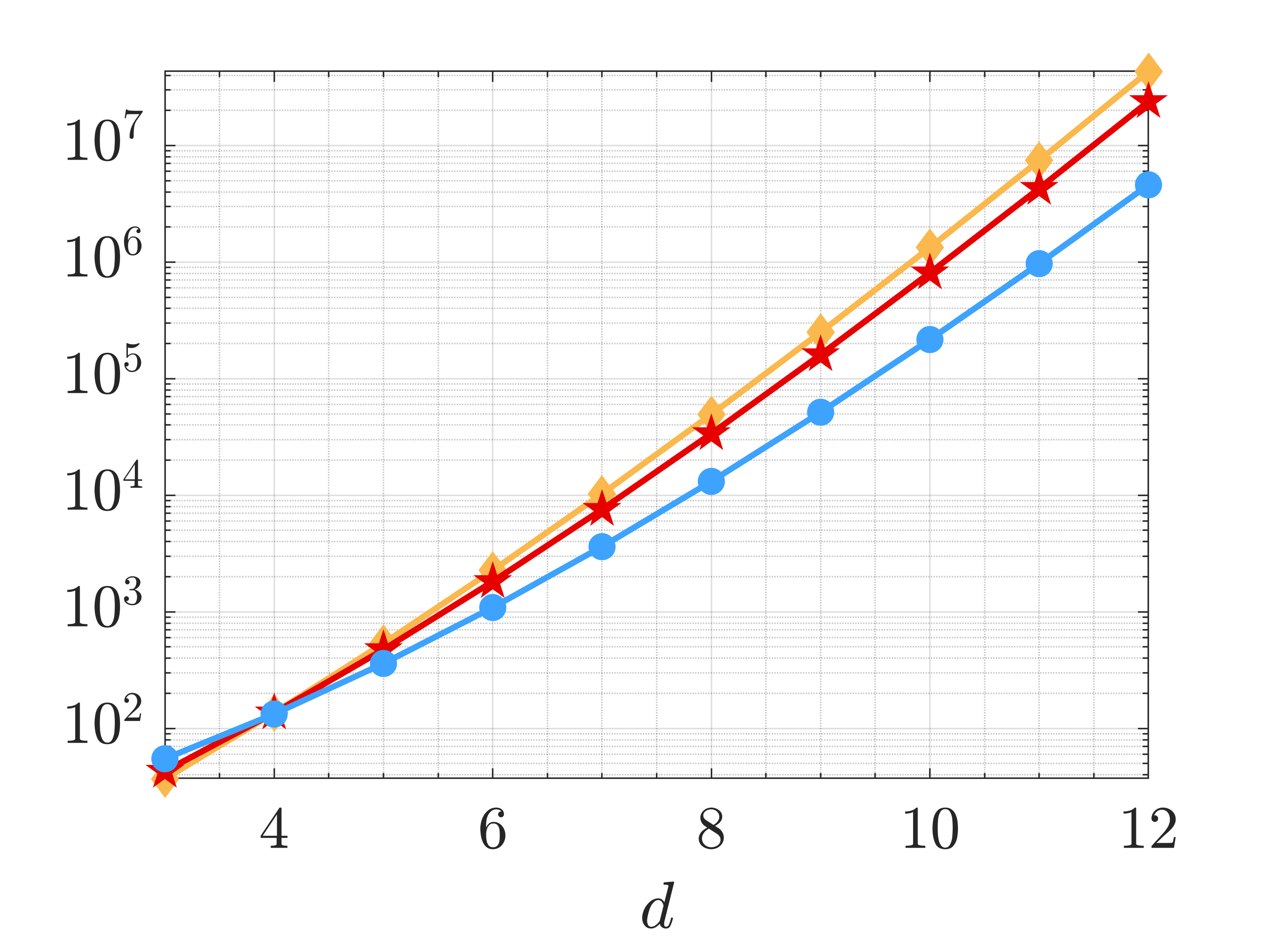}
\includegraphics[width=.49\linewidth]{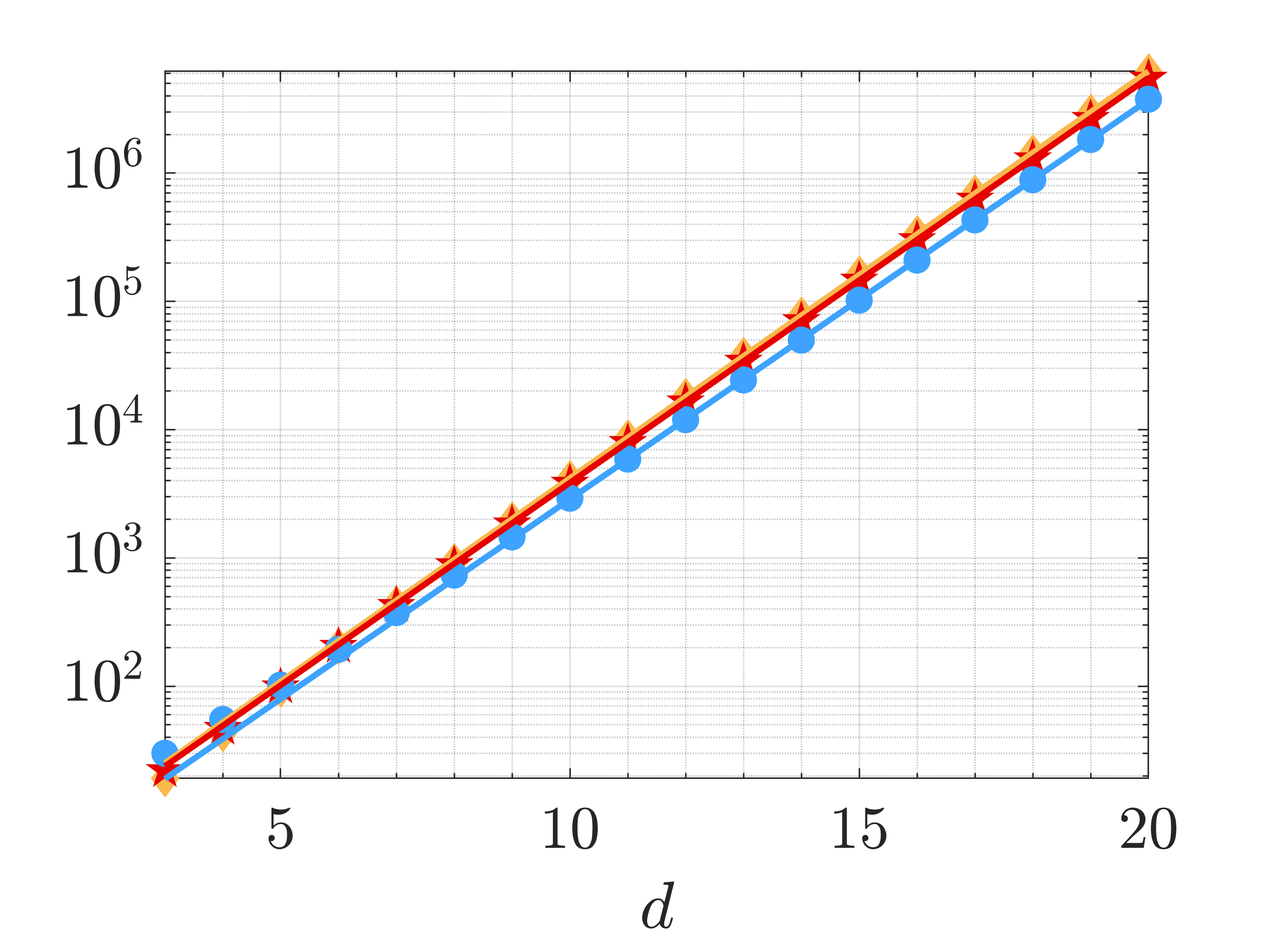}
\end{center}
\caption{\small $n^*(d,s)$ as a function of $d$ for $s=2$ ({\color{Dandelion} $\blacklozenge$}), $s=4$ ({\color{red} $\bigstar$}) and $s=10$ ({\color{Cerulean} $\bullet$}). Left: $\mu$ is uniform on $\SS_{d-1}(1)$ and $n^*(d,s)$ is given by \eqref{nstar-ball}. Right: $\mu$ is spherically normal and $n^*(d,s)$ is given by \eqref{nstar}.
}
\label{F:nstar-ball_s2-4-10}
\end{figure}


\subsection{$\mu$ is a spherically symmetric normal distribution}\label{S:normal}

Any spherically symmetric normal distribution in $\RR^d$ can be renormalised as $\SN(\0b_d,\Ib_d/d)$, with $\Ib_d$ the $d$-dimensional identity matrix. In this section, we assume that $\mu$ is the corresponding probability measure. When $U\simd\mu$, $d\,\|U\|^2$ has the chi-square distribution with $d$ degrees of freedom, so that the density of $\|U\|$ is
\be\label{psi-normal}
\psi(r)=\frac{d^{d/2}}{2^{d/2-1}\,\Gamma(d/2)}\,r^{d-1}\,\e1^{-d r^2/2} \,, \ r \geq 0\,.
\ee
The distribution of $r$ is more and more concentrated around 1 as $d$ increases. Its $k$-th moment is
\bea
M_{\psi,k} = \Ex_\mu\{\|U\|^k\}=(2^{k/2}/d^{k/2}) \Gamma((k+d)/2)/\Gamma(d/2)\,, \ k=1,2,\ldots
\eea
so that its mean is smaller than 1 for any $d$ and its variance is $1/(2\,d)-1/(8\, d^2)+ \SO(1/d^3)$, $d\to\infty$.

When $\PP=\PP_a$ uniform on $\SS_{d-1}(a)$, the expected $(\mu,s)$-distortion $D_{\mu,s}(\PP_a^{[n]})$ is given by \eqref{DS-Pa},
with $\psi(\cdot)$ given by \eqref{psi-normal}.

\vsp
When $\PP=\mu_\ms$ is the probability measure of $\SN(\0b_d,\ms^2\Ib_d/d)$, $R=\|X\|$ has the p.d.f.\ $\phi_\ms(\rho)=(1/\ms)\psi(\rho/\ms)$ where $\psi(\cdot)$ given by \eqref{psi-normal}, and the expected $(\mu,s)$-distortion $D_{\mu,s}(\mu^{[n]}_\ms)$ is given by \eqref{Ls-mean-final}
where
\bea
H(r,t;\Phi,n) =\bigg(1-\frac{1}{\ms}\,\int_{\rho\geq 0} I_\upsilon(\delta,\delta) \,\psi(\rho/\ms)\,\dd\rho \bigg)^n \,,
\eea
with $\upsilon=\upsilon(t,\rho,r)$ given by \eqref{v}.

\vsp
Let $\ms^* = \ms^*(d,n,s)$ denote the value of $\ms$ that minimises the distortion $D_{\mu,s}(\mu^{[n]}_\ms)$ and $a^* = a^*(d,n,s)$ denote the value of $a$ that minimises $D_{\mu,s}(\PP_a^{[n]})$, as in previous sections.

The empirical distribution of an optimal quantiser $\Xb_n^*$ minimising $E_{\mu,s}(\Xb_n)$ weakly converges to $\SN(\0b_d,\ms_\infty^*\,\Ib_d/d)$ as $n\to\infty$, with $\ms_\infty^*=1+s/d$; see \cite[Table~7.1]{GrafL2000}. Figure~\ref{F:normal-a-sigma-s} illustrates that $\ms^*=\ms^*(d,n,s)$ increases with $s$, similarly to $\ms_\infty^*$, and that $a^*=a^*(d,n,s)$ also increases with $s$, contrary to Sections~\ref{S:sphere} and \ref{S:ball}. The reason is that large values of $r=\|U\|$ get increasing importance as $s$ increases, while $r=1$ in Section~\ref{S:sphere} and $r\leq 1$ in Section~\ref{S:ball}. Note that $\ms^*(d,n,s)$ is significantly smaller than $\ms_\infty^*=1+s/d$ (Figure~\ref{F:normal-a-sigma-s} is for $d=10$).

\begin{figure}[ht!]
\begin{center}
\includegraphics[width=.49\linewidth]{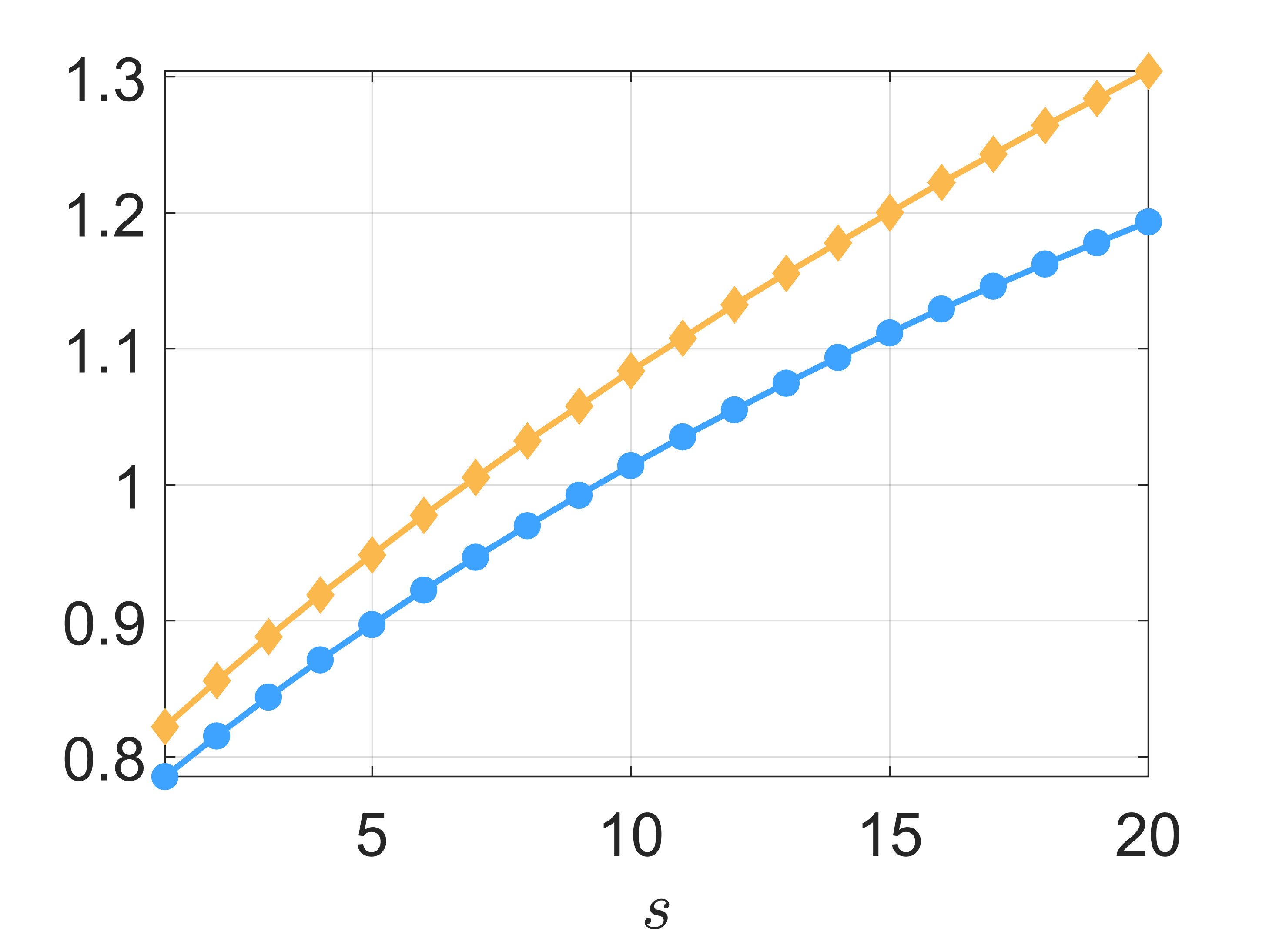}
\includegraphics[width=.49\linewidth]{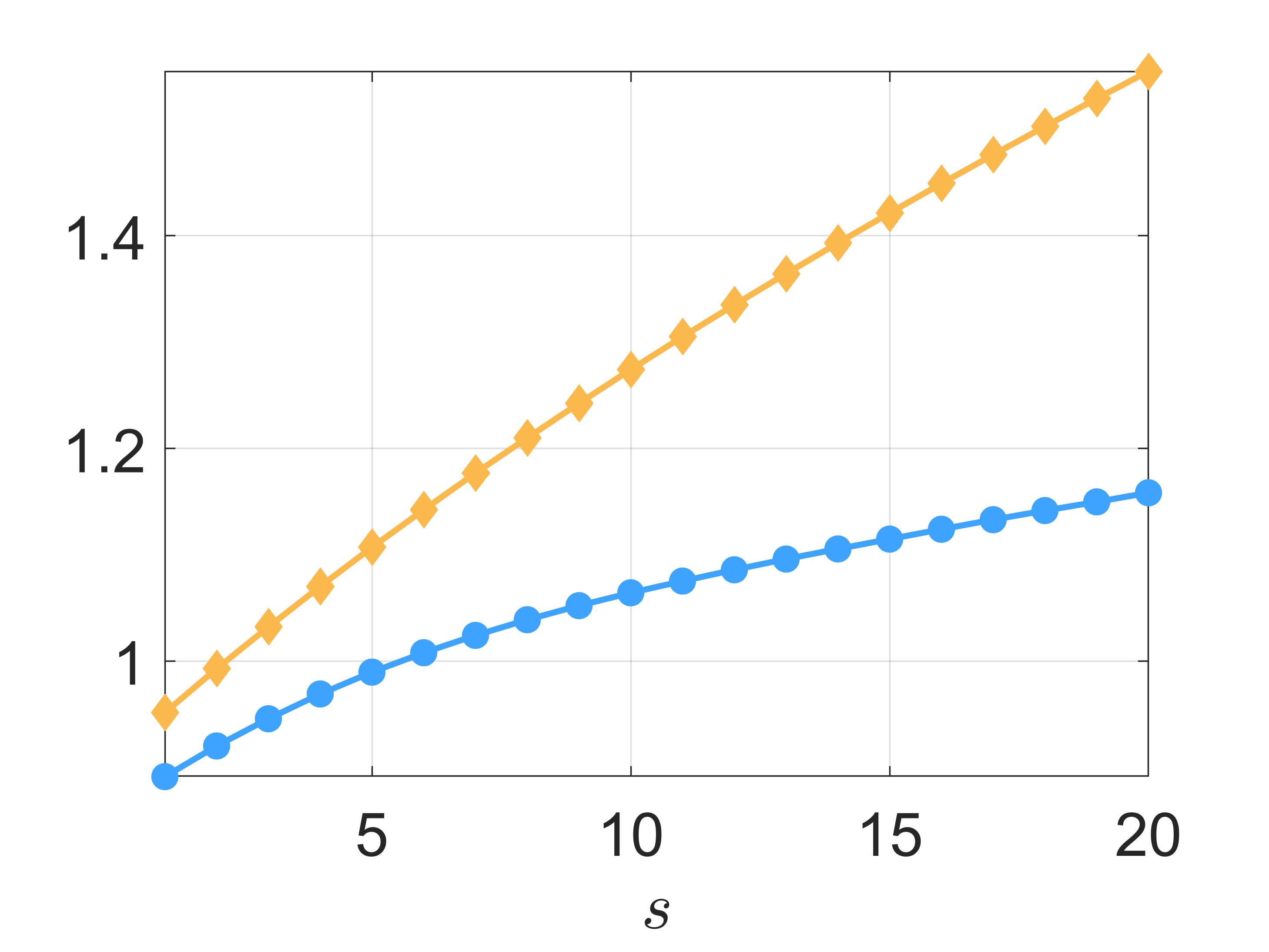}
\end{center}
\caption{\small Optimal values $a^*(d,n,s)$ ({\color{Cerulean} $\bullet$}) and $\ms^*(d,n,s)$ ({\color{Dandelion} $\blacklozenge$}) as functions of $s$ for $d=10$, $n=1\,000$ (left) and $n=100\,000$ (right).}
\label{F:normal-a-sigma-s}
\end{figure}


Figure~\ref{F:normal} presents $D_{\mu,s}^{1/s}(\mu^{[n]}_\ms)$ and $D_{\mu,s}^{1/s}(\PP_{a=\ms}^{[n]})$ as functions of $\ms$ for $s=2$, $n=1\,000$ and $d=5,10$ and $20$.
Note that, already for $d=5$ (left panel), $1\,000$ points are not enough for $\ms^*=\ms^*(d,n,s)\simeq 1.1832$ to approach the asymptotically optimal value $\ms^*_\infty=1.4$. For $d\geq 10$ (center and right panels) the plots of $D_{\mu,s}^{1/s}(\PP_\ms^{[n]})$ and $D_{\mu,s}^{1/s}(\mu^{[n]}_\ms)$ are close, especially near their minima.
The behaviour is similar when using other values of $s$.

The left panel of Figure~\ref{F:normal2} shows that $\ms^*-a^*$ tends to zero as $d$ increases, a consequence of $D_{\mu,s}(\mu_\ms^{[n]})$ tending to $D_{\mu,s}(\PP_\ms^{[n]})$. Moreover, as $d$ grows, $\psi(\cdot)$ is more and more concentrated around 1. This explains why for $d\geq 10$ the values of $a^*$ are close to those obtained for $\mu$ uniform on $\SS_{d-1}(1)$; see the curves on Figure~\ref{F:sphere}-top-left (they are plotted for $s=1$ but those obtained for $s=2$ are very similar).
The right panel presents the efficiencies of the asymptotically optimal distribution $\mu_{\ms^*_\infty}$ relative to $\PP_{a^*}$ and $\mu_{\ms^*}$, for $n=10\,000$, as functions of $d$. These efficiencies are always less than 1, meaning that the asymptotic regime is not yet reached for the values of $d$ considered. At $d=10$, however, we can observe that, contrary to the central panel of Figure~\ref{F:normal} where $n=1\,000$, now $D_{\mu,2}(\PP_{a^*}^{[n]}) > D_{\mu,2}(\mu^{[n]}_{\ms^*})$: with $n=10\,000$ points we approach the asymptotic regime and $\PP_{a^*}$ is dominated by a suitably chosen normal distribution.

\begin{figure}[ht!]
\begin{center}
\includegraphics[width=.32\linewidth]{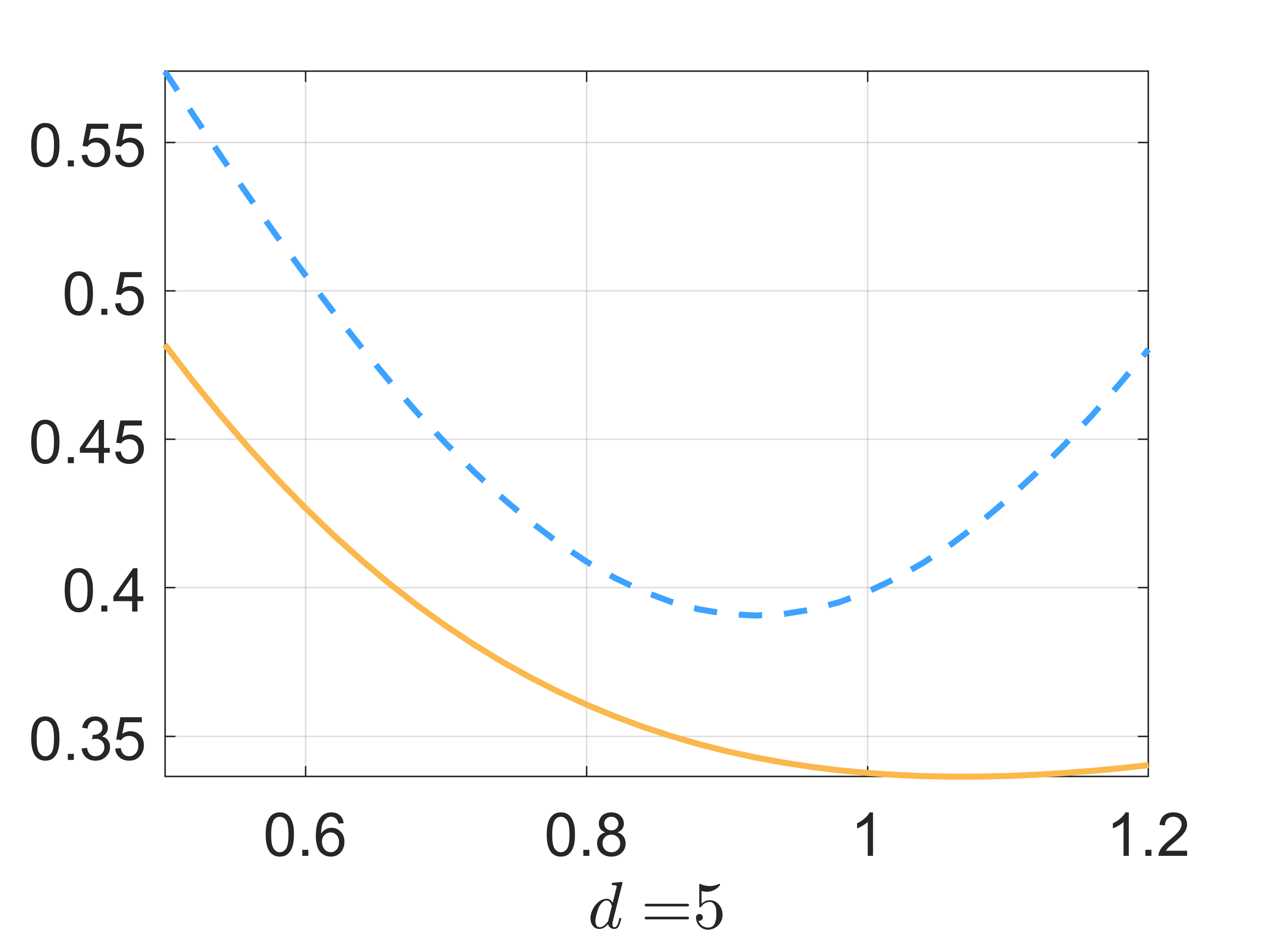}
\includegraphics[width=.32\linewidth]{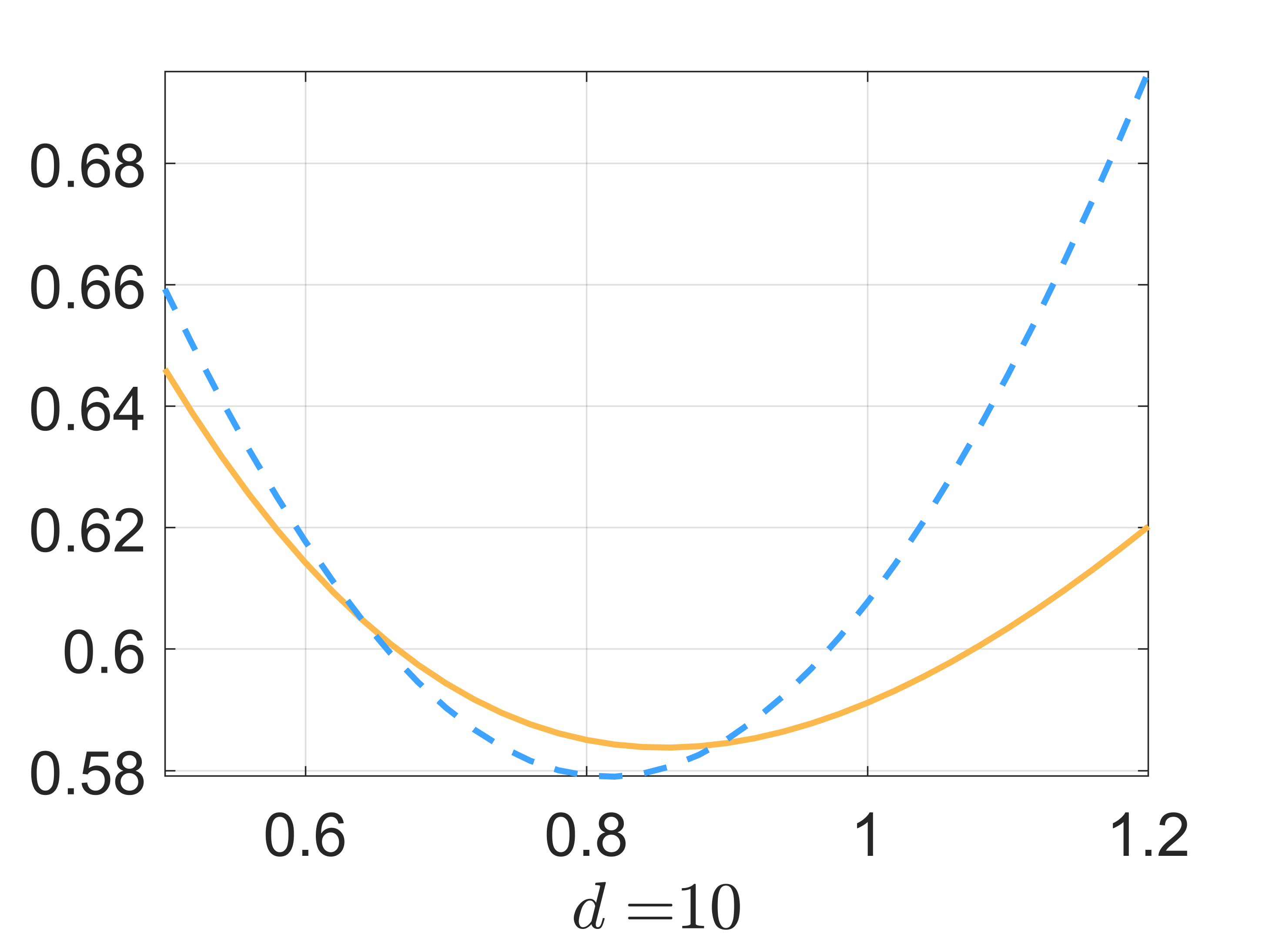}
\includegraphics[width=.32\linewidth]{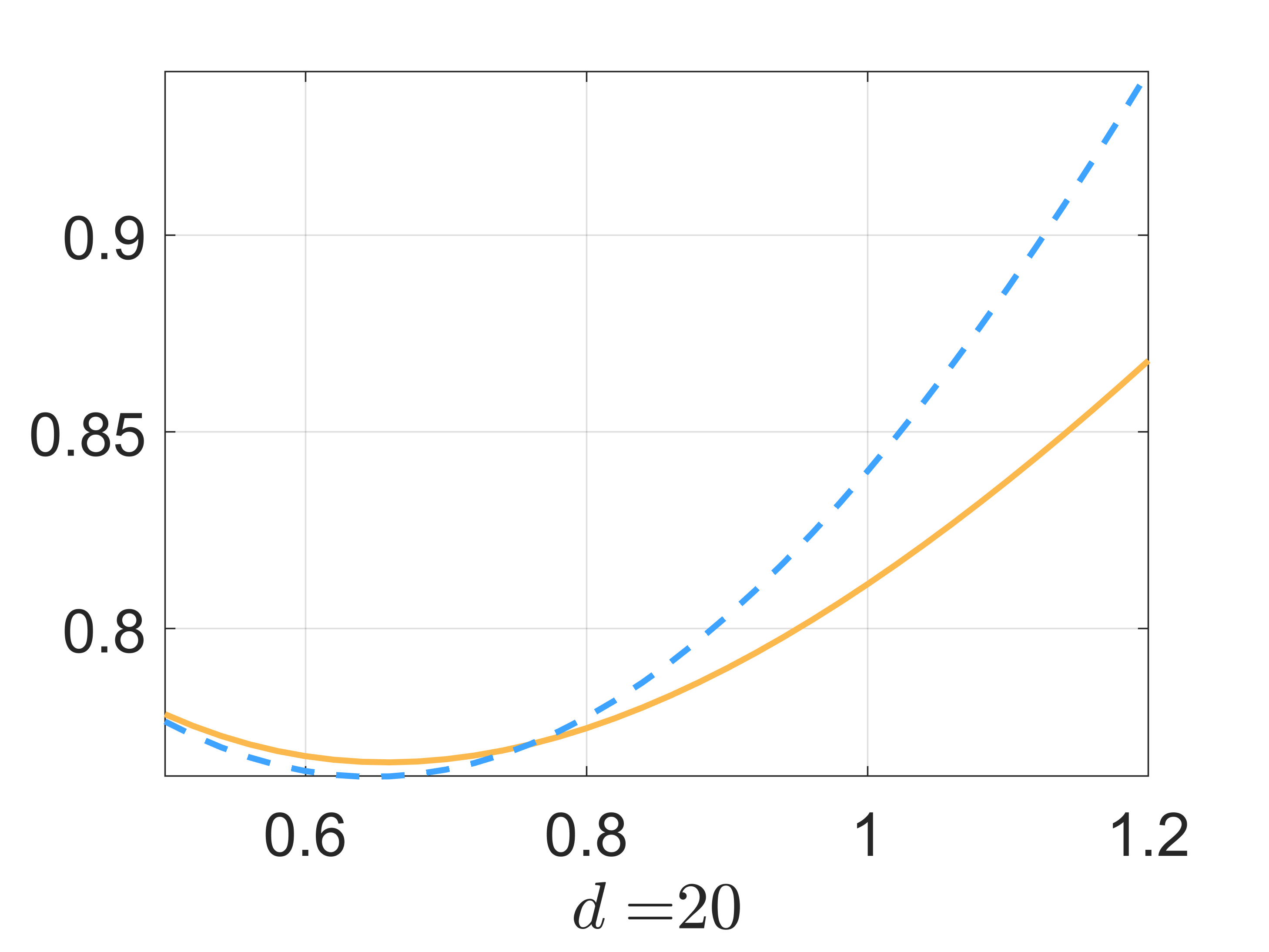}
\end{center}
\caption{\small $D_{\mu,s}^{1/s}(\PP^{[n]}_{a=\ms})$ ({\color{Cerulean} {\bf - - -}}) and $D_{\mu,s}^{1/s}(\mu^{[n]}_\ms)$ ({\color{Dandelion} {\bf ---}}) as functions of $\sigma$ for different $d$ with $n=1\,000$ and $s=2$.}
\label{F:normal}
\end{figure}


\begin{figure}[ht!]
\begin{center}
\includegraphics[width=.49\linewidth]{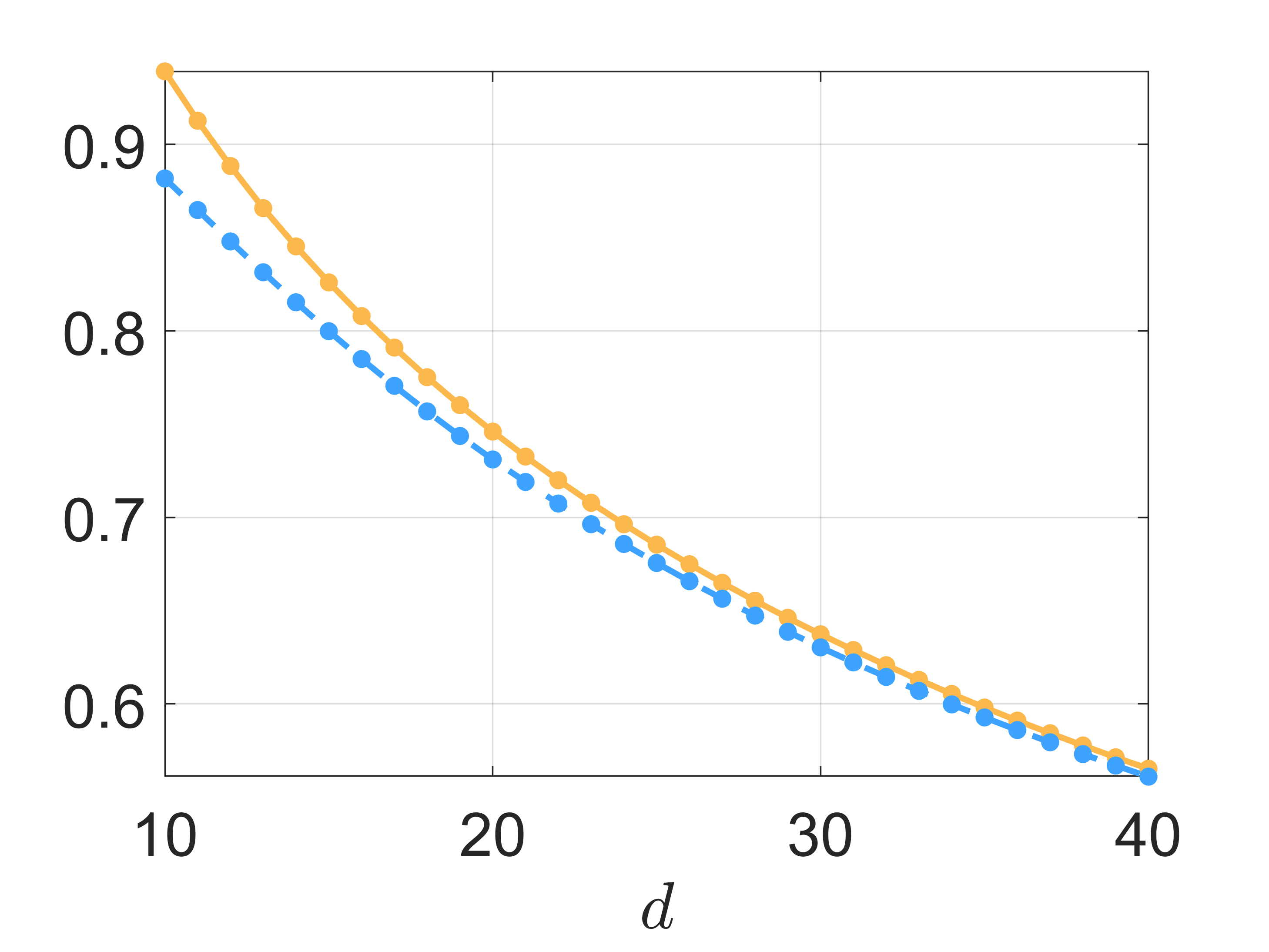}
\includegraphics[width=.49\linewidth]{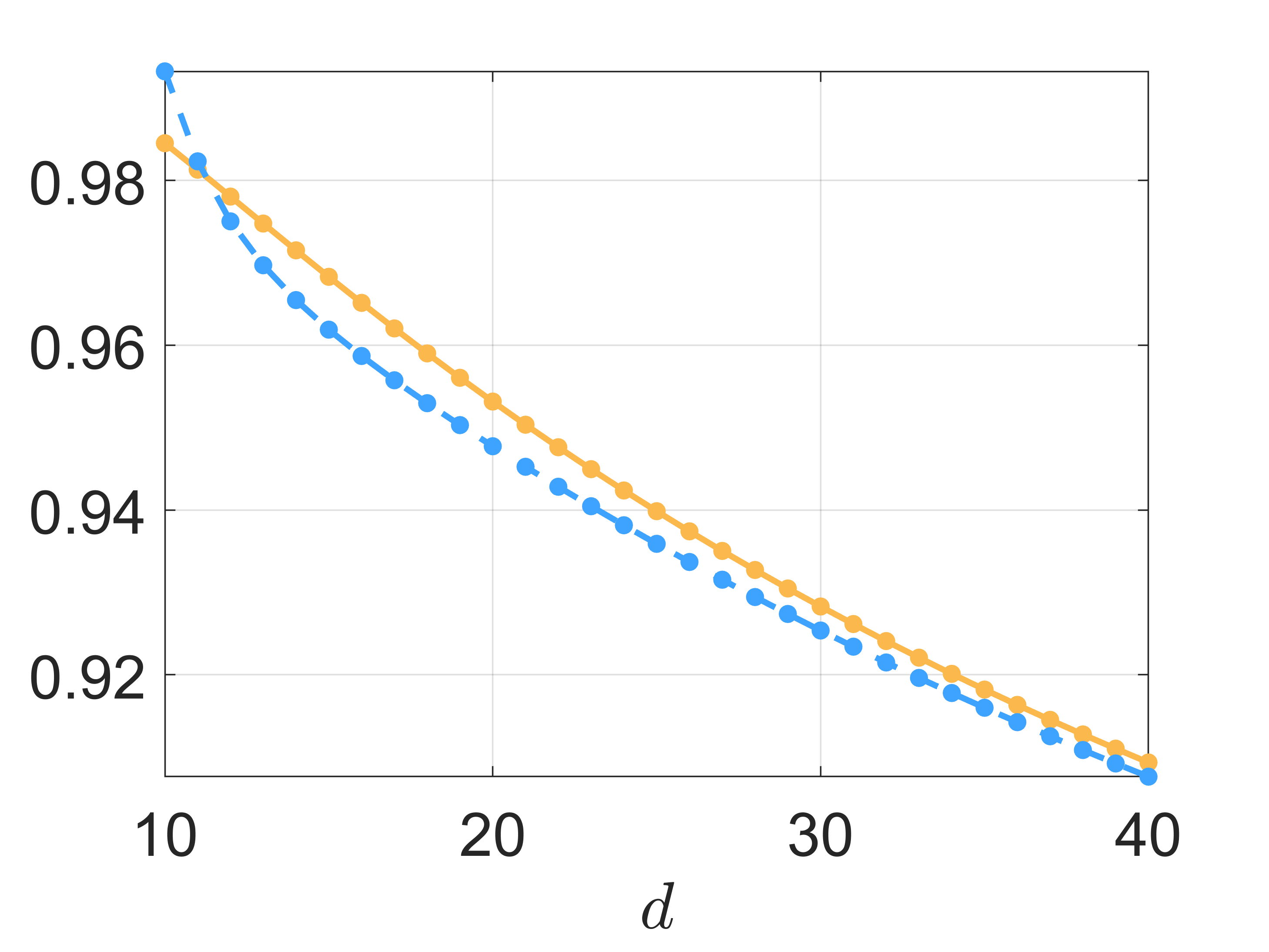}
\end{center}
\caption{\small Left: Optimal values $a^*(d,n,s)$ ({\color{Cerulean} {\bf - - -}}) and $\ms^*(d,n,s)$ ({\color{Dandelion} {\bf ---}}) as functions of $d$.
Right: ratios $D_{\mu,s}^{1/s}(\PP^{[n]}_{a^*})/D_{\mu,s}^{1/s}(\mu^{[n]}_{\ms^*_\infty})$ ({\color{Cerulean} {\bf - - -}}) and $D_{\mu,s}^{1/s}(\mu^{[n]}_{\ms^*})/D_{\mu,s}^{1/s}(\mu^{[n]}_{\ms^*_\infty})$ ({\color{Dandelion} {\bf ---}}) as functions of $d$ for $s=2$ and $n=10\,000$.}
\label{F:normal2}
\end{figure}



In view of the right panel of Figure~\ref{F:normal2}, and following the analysis in Section~\ref{S:ball}, we investigate the following question: for a fixed dimension $d$, up to which sample size $n$ does the random quantiser $\PP_{a^*}^{[n]}$ outperform $\mu_{\ms^*}^{[n]}$? Defining
\be\label{nstar}
n^*(d,s) = \max\{n\in\NN: D_{\mu,s}(\PP_{a^*}^{[n]}) < D_{\mu,s}(\mu^{[n]}_{\ms^*}) \} \,,
\ee
in the right panel of Figure~\ref{F:nstar-ball_s2-4-10}
we present $n^*(d,s)$ (in log scale) as a function of $d=3,\ldots,20$ for three values of $s$. The straight lines fitted to these logarithmic plots show almost perfect coincidence between $n^*(d,s)$ and the exponential increase $n_0\, \ml^d$, with numerical values of $n_0$ and $\ml$ depending weakly on $s$: $n_0 \simeq 2.81$, 2.68, 2.21 and $\ml \simeq 2.08$, 2.07, 2.05 for $s=2$, 4 and 10, respectively.

\section{Extreme-value approximations}\label{S:asymptotic}

In this section, extreme-value theory is used to derive approximations of the $(\mu,s)$-distortion $D_{\mu,s}(\PP_a^{[n]})$, where $\PP_a$ is uniform on $\SS_{d-1}(a)$ (Section~\ref{S:Rn-uniform-on-SS}).
In Section~\ref{S:n-growing-with-d}, three asymptotic regimes are identified that govern the limiting behaviour of $D_{\mu,s}(\PP_a^{[n]})$ when $n$ increases with $d$.
The case where $\mu$ is uniform on $\SS_{d-1}(1)$ is considered first (Section~\ref{S:extreme-value-sphere}): we show that to each asymptotic regime is associated a limiting value of the optimal radius $a^*$ that does not depend on $s$. In Section~\ref{S:extreme-value-general-mu}, these results are extended to general spherically symmetric measures $\mu$ satisfying a norm-concentration property, with the uniform distribution in $\SB_d(1)$ and the multivariate normal distribution as illustrative examples.


\subsection{Uniform quantisers on a sphere}\label{S:Rn-uniform-on-SS}

Consider random quantisers $\Rb_n$ with distribution $\Px=\PP_a^{[n]}$ where $\PP_a$ is uniform on $\SS_{d-1}(a)$. For large $n$, the distribution of $\zeta(n,d)$ in \eqref{d2URn} can be approximated using standard results from extreme-value theory. Here we assume $d\geq 3$ to avoid singularity in beta distributions.

\begin{lemma}\label{L:main-extreme-value2}
Assume that $d\geq 3$ and $\Px=\PP_a^{[n]}$. For any fixed $\ub\in\RR^d$, we have
\be\label{d2URnb}
\frac{1}{4\,a\,\|\ub\|\,\kappa_{n,d}} \left[d^2(\ub,\Rb_n)-(\|\ub\|-a)^2\right] \rad \xi_d \,, \ n\to\infty\,,
\ee
where $\xi_d$ is a random variable with Weibull c.d.f.\ $F_d(t)=1-\exp(-t^\delta)$, $t\geq 0$, and moments $\Ex\{\zeta_d^k\}=\Gamma(1+k/\delta)$ with $\delta=(d-1)/2$, and where $\kappa_{n,d}$ is the $1/n$ quantile of $\beta_{\delta,\delta}$, defined by $I_{\kappa_{n,d}}(\delta,\delta)=1/n$.
\end{lemma}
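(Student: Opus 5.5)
The plan is to reduce the statement to the classical extreme-value limit for the minimum of i.i.d.\ variables whose distribution function behaves like a power near the left endpoint of its support.

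By \eqref{d2URn} of Proposition~\ref{th:dist}, for fixed $\ub$ with $\|\ub\|>0$ and $a>0$,
\bea
\frac{d^2(\ub,\Rb_n)-(\|\ub\|-a)^2}{4\,a\,\|\ub\|} \eqd \zeta(n,d)=\min_{i=1,\ldots,n}\zeta_i\,,
\eea
with the $\zeta_i\eqd\beta_{\delta,\delta}$ i.i.d. It is therefore enough to show $\zeta(n,d)/\kappa_{n,d}\rad\xi_d$. The $\zeta_i$ are independent, so for every $t\geq 0$
\bea
\Pr\{\zeta(n,d)>t\,\kappa_{n,d}\} = \left[1-I_{t\kappa_{n,d}}(\delta,\delta)\right]^n \,.
\eea
The goal is then to prove $n\,I_{t\kappa_{n,d}}(\delta,\delta)\to t^\delta$, because $(1-x_n/n)^n\to \e1^{-x}$ whenever $x_n\to x$.

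The key input is the behaviour of $I_x(\delta,\delta)$ as $x\to 0^+$. Integrating the Beta density gives
\bea
I_x(\delta,\delta)=\frac{x^\delta}{\delta B(\delta,\delta)}\,(1+\SO(x))\,,
\eea
so $I_\cdot(\delta,\delta)$ is regularly varying at $0$ with index $\delta$. Since $I_{\kappa_{n,d}}(\delta,\delta)=1/n\to 0$ and $I_\cdot(\delta,\delta)$ is continuous and strictly increasing, we get $\kappa_{n,d}\to 0$. For fixed $t>0$ this yields
\bea
n\,I_{t\kappa_{n,d}}(\delta,\delta)=\frac{I_{t\kappa_{n,d}}(\delta,\delta)}{I_{\kappa_{n,d}}(\delta,\delta)}\to t^\delta\,.
\eea
Hence $\Pr\{\zeta(n,d)/\kappa_{n,d}>t\}\to \exp(-t^\delta)$, which is the Weibull survival function. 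Pointwise convergence of the c.d.f.'s at every $t$, with a continuous limit, is exactly convergence in distribution.

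The moments follow from the substitution $u=t^\delta$:
\bea
\Ex\{\xi_d^k\}=\int_0^\infty k t^{k-1}\e1^{-t^\delta}\dd t=\Gamma(1+k/\delta)\,.
\eea
The statement writes $\zeta_d$ in the moment formula; this should be read as $\xi_d$.

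The assumption $d\geq3$ guarantees $\delta\geq1$. The Beta density then stays bounded near $0$, so the expansion of $I_x$ holds uniformly, and the ratio argument above goes through.

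Two points need care. First, the regular-variation ratio limit has to be justified rigorously; the expansion of $I_x$ via the bound $(1-u)^{\delta-1}=1+\SO(u)$ handles this. Second, there are degenerate cases. If $\|\ub\|=0$ or $a=0$, the normalisation in \eqref{d2URnb} is undefined, so these cases should be excluded, or the statement read with $\ub\neq\0b_d$ and $a>0$.
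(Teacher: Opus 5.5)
Your proof is correct and follows the paper's route. The paper likewise reduces, via \eqref{d2URn}, to the minimum $\zeta(n,d)$ of $n$ i.i.d.\ $\beta_{\delta,\delta}$ variables and then simply cites standard extreme-value theory for $\zeta(n,d)/\kappa_{n,d}\rad\xi_d$; you supply that standard argument explicitly through $I_x(\delta,\delta)=x^\delta(1+\SO(x))/[\delta B(\delta,\delta)]$ as $x\to0^+$, and your remarks on the $\zeta_d$/$\xi_d$ typo and on excluding $\ub=\0b_d$ or $a=0$ are well taken.
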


\begin{proof}
From standard extreme-value theory (see, e.g., \cite[p.~59]{KotzN2000}), the random variable
$\zeta(n,d)/\kappa_{n,d}$ in \eqref{d2URn} converges in distribution to the random variable $\xi_d$.
\end{proof}

\begin{remark}\label{R:moments-duRn}
Lemma~\ref{L:main-extreme-value2} yields the following approximations for the $s$-moment of $d(\ub,\Rb_n)$ for large $n$ and any given $\ub$:
\bea
M_s(d,n,a,r) = \int_{\zeta\geq 0} \left[(r-a)^2 + 4\,a\,r\,\kappa_{n,d}\,\zeta\right]^{s/2}\, \dd F_d(\zeta) \,,
\eea
with $r=\|\ub\|$.
Explicit expressions are easily obtained for even $s$. Assuming that $\|\ub\|=1$, this gives in particular the approximations
\bea
\var_2 &=& 16\,a^2\,\kappa_{n,d}^2\,\var\{\xi_d\} = 16\,a^2\,\kappa_{n,d}^2 \left[\Gamma(1+2/\delta)-\Gamma^2(1+1/\delta)\right] \,, \\
\var_4 &=& 64\,a^2\,\kappa_{n,d}^2\, \left\{(1-a)^4\,\var\{\xi_d\} + a^2\,\kappa_{n,d}^2\,\var\{\xi_d^2\} \right. \\
&& \left. + (1-a)^2\,a\,\kappa_{n,d}\,\left[\Gamma(1+3/\delta)-\Gamma(1+1/\delta)\Gamma(1+2/\delta)\right] \right\} \,.
\eea
for $\var_s=\var\{d^s(\ub,\Rb_n)\}$ when $s=2$ and $s=4$, respectively.
\fin
\end{remark}

The random variables $\zeta(n,d)$ in \eqref{d2URn} depend on $\ub$ and $\Rb_n$, but \eqref{d2URnb} shows that the dependence in $\Rb_n$ vanishes asymptotically, so that $\xi_d$ only depends on $\ub/\|\ub\|$. We hence obtain the following extreme-value approximation of $D_{\mu,s}(\PP_a^{[n]})$ for $\mu$ spherically symmetric.

\begin{definition}\label{D:main-extreme-value}
Assume that $d\geq 3$ and that $\mu$ is spherically symmetric. For any $\Rb_n\simd \PP_a^{[n]}$, the extreme-value approximation of the $(\mu,s)$-distortion $E_{\mu,s}^s(\Rb_n)$ is defined by
\be\label{Ds-approx}
\widehat E_{\mu,s}^s(n;a) = \int_{r\geq 0} \int_{\zeta\geq 0} \left[(r-a)^2 + 4\,a\,r\,\kappa_{n,d}\,\zeta\right]^{s/2}\, \dd F_d(\zeta) \, \dd \Psi(r) \,,
\ee
where $\Psi(\cdot)$ is the c.d.f.\ of $\|U\|$ with $U\simd \mu$, $F_d(t)=1-\exp(-t^\delta)$ with $\delta=(d-1)/2$, and $\kappa_{n,d}$ is the $1/n$ quantile of $\beta_{\delta,\delta}$ defined by $I_{\kappa_{n,d}}(\delta,\delta)=1/n$. The extreme-value approximation $\widehat D_{\mu,s}(\PP_a^{[n]})$ of $D_{\mu,s}(\PP_a^{[n]})$ is defined by $\widehat D_{\mu,s}(\PP_a^{[n]})=\widehat E_{\mu,s}^s(n;a)$.
\end{definition}

The approximation \eqref{Ds-approx} is derived as follows. Since $\mu$ is spherically symmetric, we have $U\simd r\cdot U^{(d)}$ where $U^{(d)}$ is uniformly distributed on $\SS_{d-1}(1)$ and $r$ has the c.d.f.\ $\Psi(\cdot)$. Since $\xi_d$ in \eqref{d2URnb} only depends on $U^{(d)}$ and has the c.d.f.\ $F_d(\cdot)$, the substitution of $(r-a)^2 + 4\,a\,r\,\kappa_{n,d}\,\zeta(U^{(d)})$ for $d^2(U,\Rb_n)$ in $E_{\mu,s}^s(\Rb_n)= \Ex_\mu \{d^s(U,\Rb_n)\}$ gives \eqref{Ds-approx}. In contrast with Remark~\ref{R:moments-duRn} where $\ub$ is fixed, averaging over $\ub$ removes the variability (but the means are identical; that is, $\widehat E_{\mu,s}^s(n;a)=\Ex_{\PP_a^{[n]}}\{d^s(\ub,\Rb_n)\}$).

For $s$ even, $\widehat E_{\mu,s}^s(n;a)$ can be expressed explicitly in terms of moments of $\|U\|$, $M_{\Psi,k}=\Ex_\mu\{\|U\|^k\}=\int_{r\geq 0}  r^k\,\dd \Psi(r)$, and moments $\Ex\{\zeta^k\}=\Gamma(1+k/\delta)$ of the Weibull distribution. In particular, for $s=2$ and 4 we get the following extreme-value approximations for the $(\mu,s)$-distortions:
\be
\widehat E_{\mu,2}^2(n;a) &=& M_{\Psi,2} - 2\,a\,M_{\Psi,1}+a^2 + 4\,a\,\kappa_{n,d}\,M_{\Psi,1}\,\Gamma(1+1/\delta)  \,, \label{E2^2-general}\\
\widehat E_{\mu,4}^4(n;a) &=& M_{\Psi,4} - 4\,a\,M_{\Psi,3}+6\,a^2\,M_{\Psi,2}-4\,a^3\,M_{\Psi,1}+a^4 \nonumber \\
&& + \, 8\,a\,\kappa_{n,d}\,\Gamma(1+1/\delta)
\left( M_{\Psi,3} - 2\,a\,M_{\Psi,2}+a^2\,M_{\Psi,1} \right) \nonumber \\
&& + \, 16\,a^2\,\kappa_{n,d}^2\,M_{\Psi,2}\,\Gamma(1+2/\delta) \,. \label{E4^2-general}
\ee

Denote by $\widehat {a^*}=\widehat {a^*}(d,n,s)$ the value of $a$ that minimises $\widehat E_{\mu,s}(n;a)$, which forms an extreme-value approximation of the optimal $a^*$ minimising $D_{\mu,s}(\PP_a^{[n]})$. In the next corollary, we give the expressions of $\widehat {a^*}(d,n,2)$ and $\widehat E_{\mu,2}^2(n;\widehat {a^*})$; the later providing an extreme-value approximation for $\min_a D_{\mu,2}(\PP_a^{[n]})$.

\begin{corollary}\label{Coro:astar-s2-Pa}
For $d\geq 3$, $\mu$ spherically symmetric and $\Rb_n\simd \PP_a^{[n]}$, we have
\be
\widehat {a^*} = \widehat {a^*}(d,n,2) &=& M_{\Psi,1}\,\left[ 1 - 2\,\kappa_{n,d}\,\Gamma(1+1/\delta) \right]\,, \label{hat_a_2} \\
\widehat E_{\mu,2}^2(n;\widehat {a^*}) &=& M_{\Psi,2} - (\widehat {a^*})^2 \,. \nonumber
\ee
\end{corollary}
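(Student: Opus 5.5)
The plan is to minimise the explicit quadratic expression \eqref{E2^2-general} for $\widehat E_{\mu,2}^2(n;a)$ directly in $a$. Since $\widehat E_{\mu,2}(n;a)\geq 0$, minimising $\widehat E_{\mu,2}(n;a)$ is equivalent to minimising its square. So it suffices to work with
\bea
\widehat E_{\mu,2}^2(n;a) = M_{\Psi,2} + a^2 - 2\,a\,M_{\Psi,1}\left[1-2\,\kappa_{n,d}\,\Gamma(1+1/\delta)\right] \,,
\eea
which is obtained by grouping the two terms of \eqref{E2^2-general} that are linear in $a$. I would first justify \eqref{E2^2-general} itself, since the paper states it without derivation. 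With $s=2$ the integrand in \eqref{Ds-approx} is linear in $\zeta$, so
\bea
\int_{\zeta\geq 0}\left[(r-a)^2+4ar\kappa_{n,d}\zeta\right]\dd F_d(\zeta) = (r-a)^2+4ar\kappa_{n,d}\Gamma(1+1/\delta)\,,
\eea
using $\Ex\{\xi_d\}=\Gamma(1+1/\delta)$ from Lemma~\ref{L:main-extreme-value2}. Integrating against $\dd\Psi(r)$ then gives \eqref{E2^2-general}.

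The expression is a convex quadratic in $a$ with leading coefficient $1$. Setting the derivative to zero gives the unique minimiser
\bea
\widehat{a^*}=M_{\Psi,1}\left[1-2\kappa_{n,d}\Gamma(1+1/\delta)\right]\,,
\eea
which is \eqref{hat_a_2}. Writing $c=M_{\Psi,1}[1-2\kappa_{n,d}\Gamma(1+1/\delta)]=\widehat{a^*}$, the quadratic becomes $M_{\Psi,2}+a^2-2ac$. At $a=c$ its value is $M_{\Psi,2}-c^2=M_{\Psi,2}-(\widehat{a^*})^2$, which is the second claimed formula.

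The only delicate point is admissibility: the radius must satisfy $a\geq0$, so the unconstrained minimiser is valid only when $\widehat{a^*}\geq0$, that is, when $2\kappa_{n,d}\Gamma(1+1/\delta)\leq 1$. I would argue this holds in the regime of interest, since $\kappa_{n,d}$ is the $1/n$ quantile of a distribution symmetric about $1/2$, and hence $\kappa_{n,d}\leq 1/2$ for $n\geq2$. We also have $\Gamma(1+1/\delta)\leq1$ for $\delta\geq1$, i.e.\ $d\geq3$, because $\Gamma\leq1$ on $[1,2]$. Together these give $2\kappa_{n,d}\Gamma(1+1/\delta)\leq 1$. In the boundary case $n=1$ one would note $\kappa_{1,d}=1$ and the constrained optimum is $a=0$, but the corollary is implicitly stated for $n\geq2$. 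No other obstacles are expected, and the argument is a routine completion of the square.
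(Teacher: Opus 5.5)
Your proposal is correct and follows the same route as the paper, which states the corollary without a separate proof as an immediate consequence of minimising the convex quadratic \eqref{E2^2-general} in $a$; completing the square gives both formulas. Your derivation of \eqref{E2^2-general} from \eqref{Ds-approx} via $\Ex\{\xi_d\}=\Gamma(1+1/\delta)$, and your check that $\widehat{a^*}\geq 0$ for $n\geq 2$ (using $\kappa_{n,d}\leq 1/2$ and $\Gamma\leq 1$ on $[1,2]$), are valid details that the paper leaves implicit.
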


For $\mu$ uniform on $\SB_d(1)$, Figure~\ref{F:bstar_ALLastar_ball_s2_D} indicates that $\widehat {a^*}(d,n,2)$ and $a^*(d,n,2)$ are practically indistinguishable for $n=1\,000$ and $n=100\,000$ with $d=10,\ldots,50$.
For the situation considered in Section~\ref{S:normal} where $U\simd\SN(\0b_d,\Ib_d/d)$, $\widehat {a^*}(d,n,2)$ is also very close to $a^*(d,n,2)$ when $n$ is large enough; its evolution as a function of $d$ for $n=10\,000$ is visually indistinguishable from that of $a^*(d,n,s)$ on the left panel of Figure~\ref{F:normal2}.

The expressions for $\widehat {a^*} =\widehat {a^*}(d,n,4)$ and $\widehat E_{\mu,4}^4(n;\widehat {a^*})$ can also be written in analytic form (by finding the roots of a third-degree polynomial), but the expressions are cumbersome.

\subsection{Three asymptotic regimes for $n$ growing with $d$}\label{S:n-growing-with-d}

In view of Lemma~\ref{L:main-extreme-value2}, the asymptotic behaviour of $\kappa_{n,d}$ is the key element in understanding that of $d^2(\ub,\Rb_n)$. For $d=3$, $\beta_{1,1}$ is uniform on $[0,1]$ and we simply have $\kappa_{n,3}=1/n$. For $d>3$ there is no explicit formula for $\kappa_{n,d}$, but it can easily be computed numerically by solving $I_{\kappa_{n,d}}(\delta,\delta)=1/n$. For fixed $t\leq 1/2$, $I_t(\delta,\delta)$ decreases as $\delta$ increases. Therefore, $\kappa_{n,d}$ is an increasing function of $d$ for any fixed $n\geq 2$ (it is also a decreasing function of $n$ for any fixed $d$). When $n$ tends to infinity with $d$ we have the following properties.

\begin{proposition} \label{th:kappa_n}
For fixed $d\geq 3$, the $1/n$ quantile $\kappa_{n,d}$ of  $\beta_{\delta,\delta}$ with $\delta=(d-1)/2$ tends to zero as $n$ tends to infinity. When $d \to \infty$ and $n=n(d)$ grows with $d$, we have the following three cases:\\
\noindent{\textit(i)} if $n^{1/d}\to\infty$, then $\lim_{d\to\infty} \kappa_{n,d} = 0$;\\
\noindent{\textit(ii)} if $n=C\ml^d\,(1+o(1))$ with $\ml>1$ and $C>0$, then
\be \label{asymptotic-kappa} \lim_{d\to\infty} \kappa_{n,d} = \frac12 \left[1-\sqrt{1-1/\ml^2}\right]\, ;
\ee
\noindent{\textit(iii)} 
if $(\log n)/d \to 0$, then $\lim_{d\to\infty} \kappa_{n,d} = \frac12$.
\end{proposition}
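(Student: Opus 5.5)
The plan is to work directly with the defining equation $I_{\kappa_{n,d}}(\delta,\delta)=1/n$ and to obtain sharp two-sided bounds on the lower tail of $\beta_{\delta,\delta}$ by Laplace-type estimates. First, the fixed-$d$ claim. $I_t(\delta,\delta)$ is continuous and strictly increasing in $t\in[0,1]$, with $I_0=0$. Since $1/n\to 0$, continuity of the inverse gives $\kappa_{n,d}\to 0$.

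For the regimes with $d\to\infty$, write
\[
I_t(\delta,\delta)=\frac{1}{B(\delta,\delta)}\int_0^t [u(1-u)]^{\delta-1}\,\dd u .
\]
For $t\le 1/2$ the integrand is increasing. This yields the elementary bounds
\[
\frac{\epsilon\,[(t-\epsilon)(1-t+\epsilon)]^{\delta-1}}{B(\delta,\delta)}\le I_t(\delta,\delta)\le \frac{t\,[t(1-t)]^{\delta-1}}{B(\delta,\delta)} ,
\]
valid for any $0<\epsilon<t$. Stirling's formula gives $B(\delta,\delta)=4^{-\delta}\,\SO(\delta^{-1/2})$ up to polynomial factors. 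Taking logarithms and dividing by $\delta=(d-1)/2$, we get, uniformly for $t$ in compact subsets of $(0,1/2)$,
\[
\frac{1}{\delta}\log I_t(\delta,\delta)\to \log\bigl(4t(1-t)\bigr).
\]
Choosing $\epsilon=t/\delta$ in the lower bound keeps the polynomial losses negligible. At $t=1/2$, symmetry gives $I_{1/2}(\delta,\delta)=1/2$.

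Now set $g(t)=\log(4t(1-t))$, which is strictly increasing on $(0,1/2]$ from $-\infty$ to $0$. The equation $I_{\kappa}=1/n$ reads
\[
\frac{1}{\delta}\log I_\kappa(\delta,\delta)=-\frac{2\log n}{d-1}.
\]
\textit{Case (ii).} Here $2\log n/(d-1)\to 2\log\ml$. Pick any $t_1<t_0<t_2$ in $(0,1/2)$, where $t_0$ solves $4t_0(1-t_0)=1/\ml^2$, i.e. $t_0=\frac12[1-\sqrt{1-1/\ml^2}]$. By the uniform convergence and the strict monotonicity of $g$, for large $d$ we have $I_{t_1}<1/n<I_{t_2}$. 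Monotonicity of $I_\cdot$ then gives $t_1<\kappa_{n,d}<t_2$, which proves \eqref{asymptotic-kappa}. The constant $C$ and the $1+o(1)$ factor disappear after taking logarithms and dividing by $d$.

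\textit{Case (i).} Here $\log n/d\to\infty$. For any fixed $t_2>0$, the upper bound gives $I_{t_2}\le \exp(\delta(g(t_2)+o(1)))$, which is eventually larger than $1/n$ only if $2\log n/(d-1)<-g(t_2)+o(1)$. But this fails, so $I_{t_2}<1/n$ eventually? That is the wrong direction, so it must be reorganised. The relevant inequality is: for any fixed $t_2$, the lower bound gives $I_{t_2}\ge \exp(\delta(g(t_2)+o(1)))$. Since $\log n/d\to\infty$, this exceeds $1/n$ for large $d$, hence $\kappa_{n,d}<t_2$. As $t_2$ is arbitrary, $\kappa_{n,d}\to 0$.

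\textit{Case (iii).} Here $\log n/d\to 0$. For fixed $t_1<1/2$, the upper bound gives $I_{t_1}\le \exp(\delta(g(t_1)+o(1)))$ with $g(t_1)<0$, and this is eventually smaller than $1/n$. Hence $\kappa_{n,d}>t_1$. Moreover $n\ge 2$ gives $\kappa_{n,d}\le 1/2$, so $\kappa_{n,d}\to 1/2$.

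The main obstacle is making the logarithmic asymptotics of $I_t(\delta,\delta)$ uniform and rigorous, in particular controlling the polynomial Stirling factors and the choice of $\epsilon$. The sandwich bounds above should handle this cleanly. Only the fixed-$t$ limits and the monotonicity of $I_t$ in $t$ are actually needed, so full uniformity can be avoided.
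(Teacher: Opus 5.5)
Your proof is correct, but it takes a different route from the paper.

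The paper proves explicit, non-asymptotic two-sided bounds on $\kappa_{n,d}$, namely \eqref{bounds-kappa}:
\begin{itemize}
\item the upper bound on $\kappa_{n,d}$ comes from $I_t(\delta,\delta)\ge t^\delta(1-t)^\delta/[\delta B(\delta,\delta)]$;
\item the lower bound comes from rounding $\delta$ down to $\delta'=\lfloor\delta\rfloor$, which invokes without proof that $I_t(\delta,\delta)$ decreases in $\delta$ for $t\le 1/2$, and then applying the beta--binomial identity to get $I_t(\delta,\delta)\le \frac12[4t(1-t)]^{\delta'-1}$.
\end{itemize}
The three limits are then read off by inserting the growth of $n$ into these closed-form envelopes, using $[\delta B(\delta,\delta)]^{1/\delta}\to 1/4$.

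You instead establish only the exponential-scale limit $\delta^{-1}\log I_t(\delta,\delta)\to\log(4t(1-t))$ at each fixed $t\in(0,1/2)$. You then invert it through the monotonicity of $t\mapsto I_t(\delta,\delta)$, which is a standard squeeze. Your upper bound $I_t(\delta,\delta)\le t\,[t(1-t)]^{\delta-1}/B(\delta,\delta)$ differs from the paper's only by subexponential factors, which are irrelevant at this scale. It also holds directly for non-integer $\delta$, so you avoid both the integer reduction and the unproved monotonicity in $\delta$.

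You could even dispense with Stirling. The elementary bounds $4^{1-\delta}\ge B(\delta,\delta)\ge 2\eta\,(1/4-\eta^2)^{\delta-1}$ for every $\eta\in(0,1/2)$ already give $\delta^{-1}\log B(\delta,\delta)\to-\log 4$.

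What the paper's route buys is a quantitative envelope for $\kappa_{n,d}$ at finite $n$ and $d$ (shown in Figure~\ref{F:sphere-approx0}). A pure limit argument like yours does not provide this.

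One editorial point: delete the false start in case (i), up to ``it must be reorganised''. The corrected argument, which uses your lower bound at a fixed $t_2$ together with $(\log n)/\delta\to\infty$, is complete.
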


\begin{proof}
Since $x^{\delta-1}(1-t)^\delta\leq x^{\delta-1}(1-t)^{\delta-1}\leq x^{\delta-1}(1-x)^{\delta-1}$ for $\delta\geq 1$ and all $t\in[0,1)$ and $x\in[0,t]$, we have
\be\label{It-ineq1}
\frac{t^\delta (1-t)^\delta}{\delta B(\delta,\delta)} \leq I_t(\delta,\delta) \,. 
\ee
The derivation of an exploitable upper bound is more delicate.
Assume that $d \geq 3$ and let
$\delta'=\lfloor \delta \rfloor$, the largest integer smaller than or equal to $\delta$.
As $n\geq 2$, we are only interested in values of $t$ less than $1/2$. Since $\delta'\leq \delta$, we have $I_t(\delta,\delta) \leq I_t(\delta',\delta')$, which
can be calculated explicitly by successive integration by parts. Direct calculations yield the following special case of the well-known relation between the c.d.f.\ of the beta and binomial distributions:
\be\label{It1}
I_t(\delta',\delta') &=& \sum_{k=0}^{\delta'-1} \binom{2\delta'-1}{k} t^{2\delta'-1-k}(1-t)^k \,, \\
&=& t^{\delta'}(1-t)^{\delta'-1} \sum_{k=0}^{\delta'-1} \binom{2\delta'-1}{k} \left(\frac{t}{1-t}\right)^{\delta'-1-k} \,.
\ee
Using the property $t/(1-t) \leq 1$ for $t\leq 1/2$, with $\sum_{k=0}^{\delta'-1} \binom{2\delta'-1}{k}=4^{\delta'-1}$, we thus obtain
\be \label{upper_bound}
I_t(\delta,\delta) \leq I_t(\delta',\delta') \leq t^{\delta'}[4\,(1-t)]^{\delta'-1} \leq \frac12\,[4\,t(1-t)]^{\delta'-1}\,, \ t\leq 1/2\,.
\ee

Together with \eqref{It-ineq1}, this implies, for $n\geq 2$ and $d\geq 3$,
\be\label{bounds-kappa}
\frac12 \left[1-\sqrt{1- (2/n)^{1/(\delta'-1)}}\right]  \leq \kappa_{n,d} \leq \frac12 \left[1-\sqrt{(1-4\,c_d\,n^{-1/\delta})_+}\right] \,,
\ee
where $(x)_+=\max\{x,0\}$ and $c_d=[\delta\,B(\delta,\delta)]^{1/\delta} = 1/4+ (\log d)/(4\,d)+\SO(1/d)$, $d\to\infty$ (with, moreover, $(4\,c_d)^\delta<d+2$ for all $d \geq 2$, implying that the right-hand side of \eqref{bounds-kappa} is strictly smaller than  1/2 for $n\geq d+2$).

When $d$ is fixed and $n\to\infty$, or $n^{1/d}\to\infty$ as $d\to\infty$, $n^{-1/d}\to 0$ implying that the right-hand side of \eqref{bounds-kappa} tends to zero. Therefore, $\kappa_{n,d}\to 0$.

When $n=C\ml^d\,(1+o(1))$ with $\ml>1$ and $C>0$ (case (\textit{ii})), by taking the limits on left and right hand sides of \eqref{bounds-kappa}, as $\lim_{d\to\infty} c_d=1/4$, we obtain $\lim_{d\to\infty} \kappa_{n,d} = (1/2)\,\left[1-\sqrt{1-1/\ml^2}\right]$.

When $\log n = o(d)$, (case (\textit{iii})), $n^{-1/d}\to 1$ and the left-hand side of \eqref{bounds-kappa} tends to 1/2.
\end{proof}

The left panel of Figure~\ref{F:sphere-approx0} plots $\kappa_{n,d}$ as a function of $d$ for different $n$: the continuation of each curve (with $n$ fixed) goes to the limiting value 1/2 in view of Proposition~\ref{th:kappa_n}-(\textit{iii}). The bounds in \eqref{bounds-kappa} are asymptotically accurate for large $n$, particularly the upper bound, but lack precision for smaller $n$. For illustration, only the envelope for $n = 10^3$ is shown.

\subsection{$\mu$ uniform on the unit sphere $\SS_{d-1}(1)$} \label{S:extreme-value-sphere}

When $U\simd \mu$ uniform on $\SS_{d-1}(1)$, for any $\Rb_n\simd \PP_a^{[n]}$ the approximations \eqref{E2^2-general} and \eqref{E4^2-general}
become
\bea
\widehat E_{\mu,2}(n;a) \!\!\! &=& \!\!\! \left[ (1-a)^2 + 4\,a\,\kappa_{n,d}\,\Gamma(1+1/\delta) \right]^{1/2} \,, \\
\widehat E_{\mu,4}(n;a) \!\!\! &=& \!\!\! \left[ (1-a)^4 + 16\,a^2\,\kappa_{n,d}^2\,\Gamma(1+2/\delta) + 8\,a(1-a)^2\,\kappa_{n,d}\,\Gamma(1+1/\delta) \right]^{1/4} \,.
\eea
For $s=2$, Corollary~\ref{Coro:astar-s2-Pa} gives $\widehat {a^*}=\widehat {a^*}(d,n,2) = 1 - 2\,\kappa_{n,d}\,\Gamma(1+1/\delta)$ and $\widehat E_{\mu,2}^2(n;\widehat {a^*}) =1 - (\widehat {a^*})^2$.
The right panel of Figure~\ref{F:sphere-approx0} plots $\widehat {a^*}(d,n,2)-a^*(d,n,2)$ as a function of $d$ for different values of $n$: $\widehat {a^*}-a^* >0$ and the accuracy of the approximation of $a^*$ by $\widehat {a^*}$ tends to decrease with $d$ and increase with $n$.

\begin{figure}[ht!]
\begin{center}
\includegraphics[width=.49\linewidth]{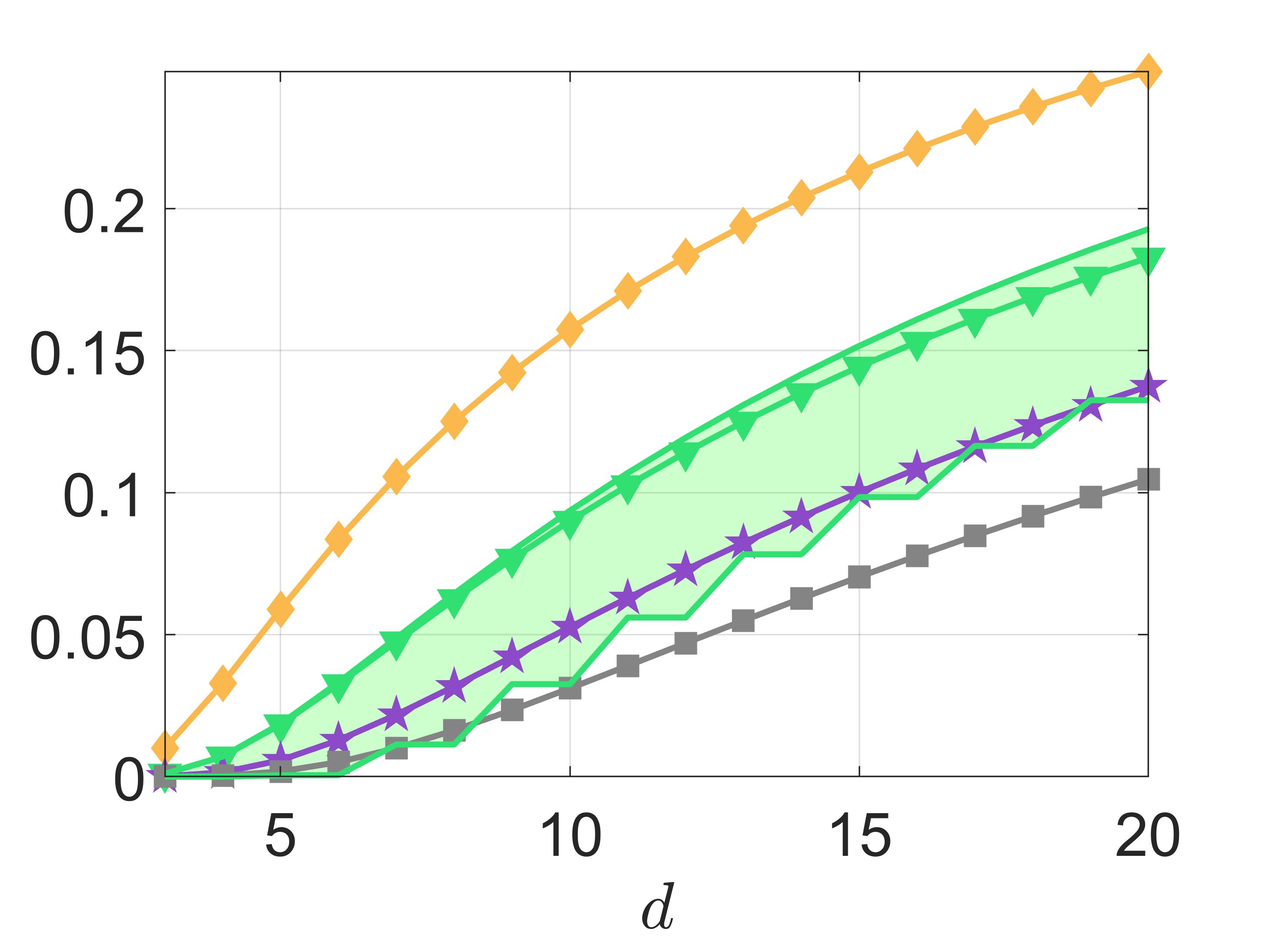}
\includegraphics[width=.49\linewidth]{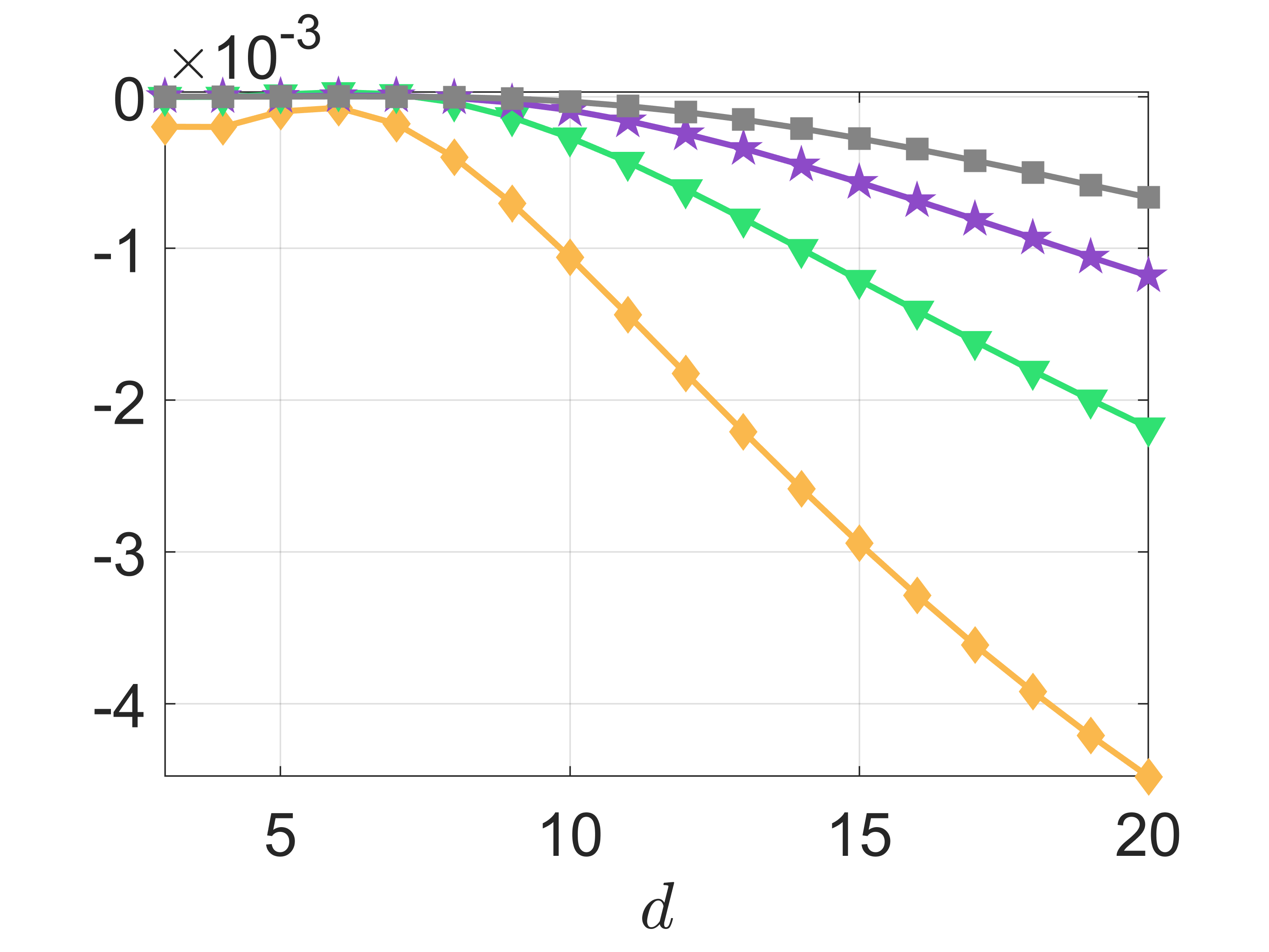}
\end{center}
\caption{\small Values of $\kappa_{n,d}$ (left) and of $\widehat {a^*}(d,n,2)-a^*(d,n,2)$ (right) as functions of $d$ for different values of $n$: $n=10^2$ ({\color{Dandelion} $\blacklozenge$}), $n=10^3$ ({\color{green} $\blacktriangledown$}), $n=10^4$ ({\color{Orchid} $\bigstar$}) and $n=10^5$ ({\color{gray} {\tiny $\blacksquare$}}).
}
\label{F:sphere-approx0}
\end{figure}

The left and right panels of Figure~\ref{F:sphere-approx} show that the two approximations $\widehat D_{\mu,2}(\PP^{[n]}_a)$ and $\widehat D_{\mu,4}(\PP^{[n]}_a)$ are quite accurate when $n$ is large enough for extreme value theory to be applicable. For large $d$, convergence to the extreme-value distribution is slow and $\widehat D_{\mu,s}^{1/s}(\PP^{[n]}_a)$ slightly overestimates the true value $D_{\mu,s}^{1/s}(\PP^{[n]}_a)$.

The left panel of Figure~\ref{F:sphere-approx-2} provides another illustration of the accuracy of the estimation, and presents normalised values $n^{1/d}\,\widehat D_{\mu,s}^{1/s}(\PP^{[n]}_a)$ and $n^{1/d}\,D_{\mu,s}^{1/s}(\PP^{[n]}_a)$ as functions of $n=2^m$, $m=6,7,\ldots,20$, for $s=4$ and $d=20$, both for $a=0.75$ fixed and $a$ optimal, i.e., $a^*(d,n,s)$ (the plots with $\widehat {a^*}(d,n,s)$ substituted for $a^*(d,n,s)$ are visually indistinguishable). The value $\widehat D_{\mu,s}^{1/s}(\PP^{[n]}_a)$ approaches $D_{\mu,s}^{1/s}(\PP^{[n]}_{a^*})$ for $n$ such that $a^*=a^*(d,n,s) \simeq 0.75$. On the right panel of Figure~\ref{F:sphere-approx-2}, for each $n$ considered, 100 values of $n^{1/d}\,E_{\mu,s}(\Rb_n)$ computed numerically\footnote{We used $2^{12}$ i.i.d.\ $U_i\simd \mu$, which provides an accurate enough estimation of $E_{\mu,s}(\Rb_n)$; see Remark~\ref{R:distortion-evaluation}.} for 100 random point sets $\Rb_n\simd \PP^{[n]}_{a^*}$ have been added to the plots of $n^{1/d}\,\widehat D_{\mu,s}^{1/s}(\PP^{[n]}_{a^*})$ and $n^{1/d}\,D_{\mu,s}^{1/s}(\PP^{[n]}_{a^*})$. The simulations are in perfect adequation with the exact mean value $n^{1/d}\,D_{\mu,s}^{1/s}(\PP^{[n]}_{a^*})$ and indicate that the variability of $E_{\mu,s}(\Rb_n)$ across quantisers is very small.

\begin{figure}[ht!]
\begin{center}
\includegraphics[width=.49\linewidth]{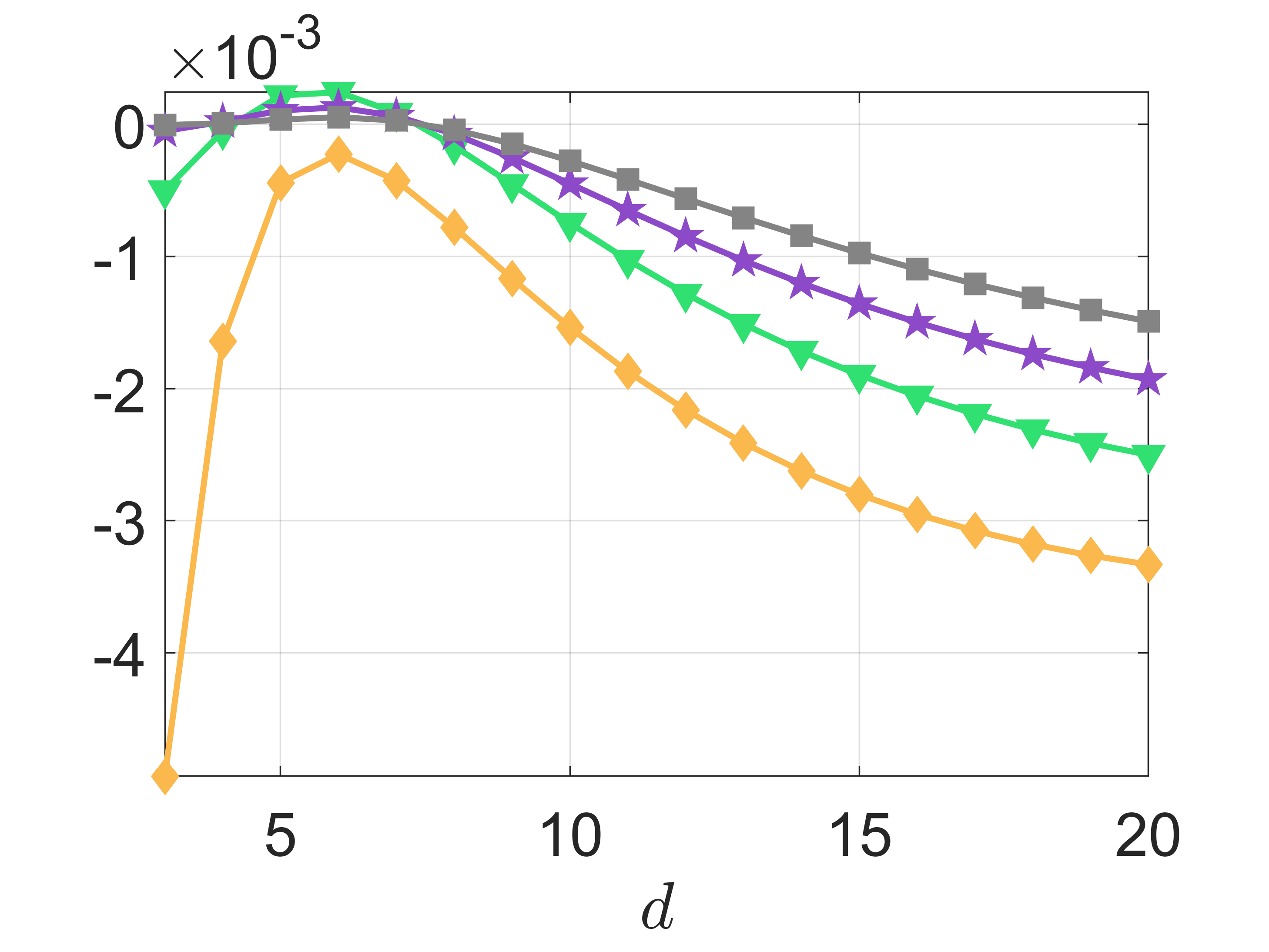}
\includegraphics[width=.49\linewidth]{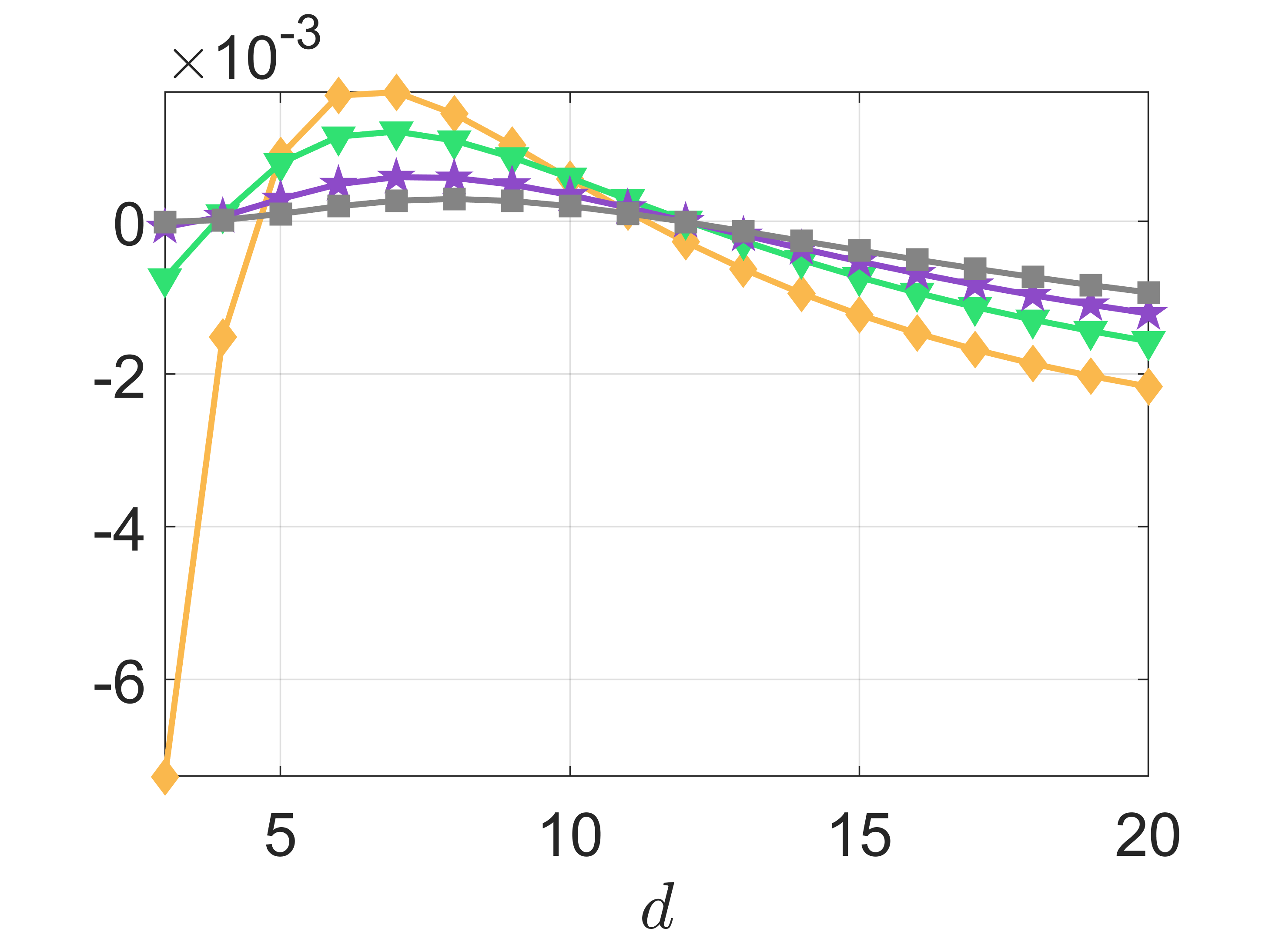} \\
\end{center}
\caption{\small Relative error $1-\widehat D_{\mu,s}^{1/s}(\PP^{[n]}_a)/D_{\mu,s}^{1/s}(\PP^{[n]}_a)$ of the approximation of $D_{\mu,s}^{1/s}(\PP^{[n]}_a)$ (left for $s=2$ and right for $s=4$) based on extreme-value theory, as a function of $d$ for different values of $n$: $n=10^2$ ({\color{Dandelion} $\blacklozenge$}), $n=10^3$ ({\color{green} $\blacktriangledown$}), $n=10^4$ ({\color{Orchid} $\bigstar$}) and $n=10^5$ ({\color{gray} {\tiny $\blacksquare$}}); $a$ is the optimal value $a^*(d,n,s)$.
}
\label{F:sphere-approx}
\end{figure}

\begin{figure}[ht!]
\begin{center}
\includegraphics[width=.49\linewidth]{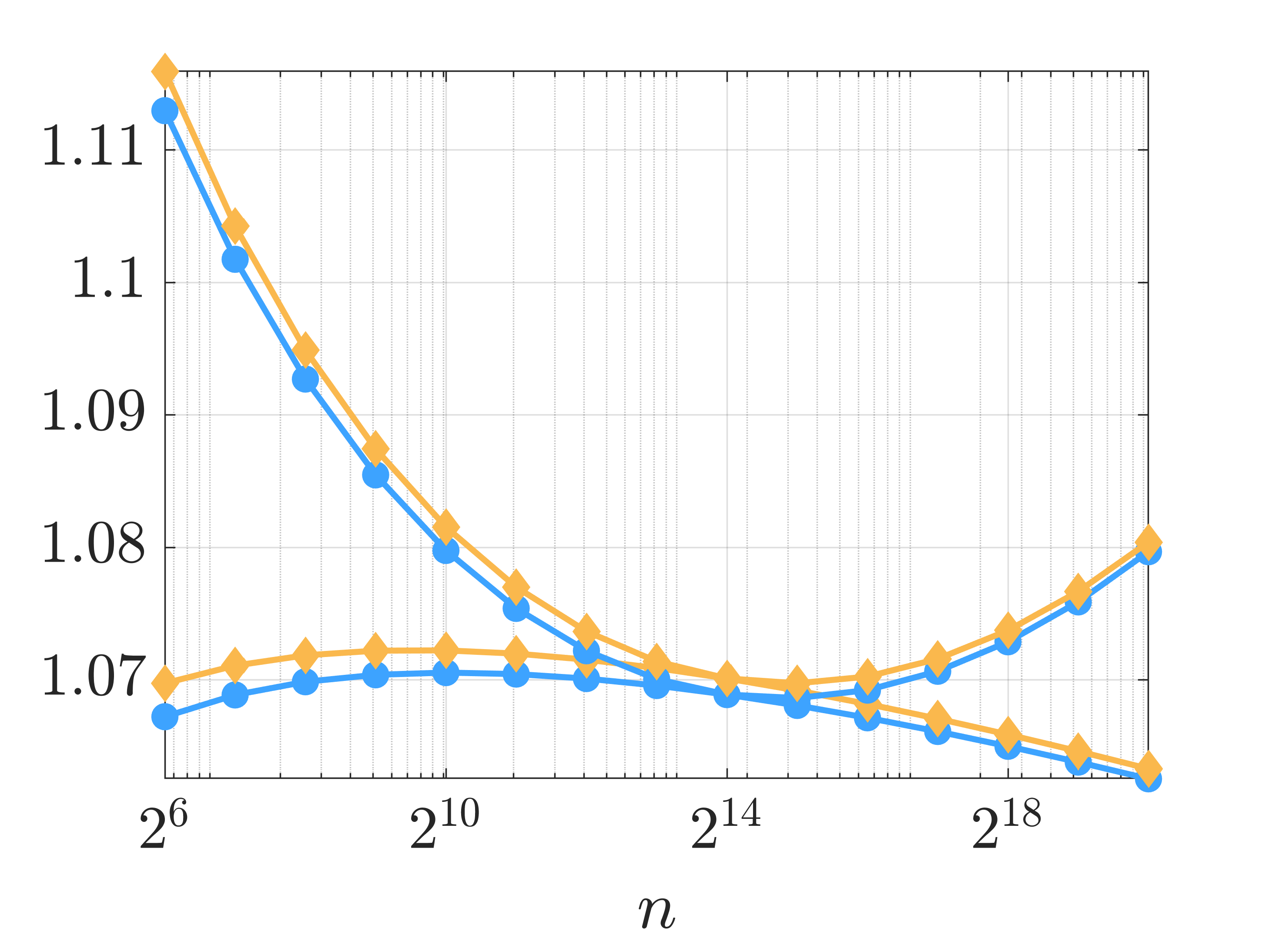}
\includegraphics[width=.49\linewidth]{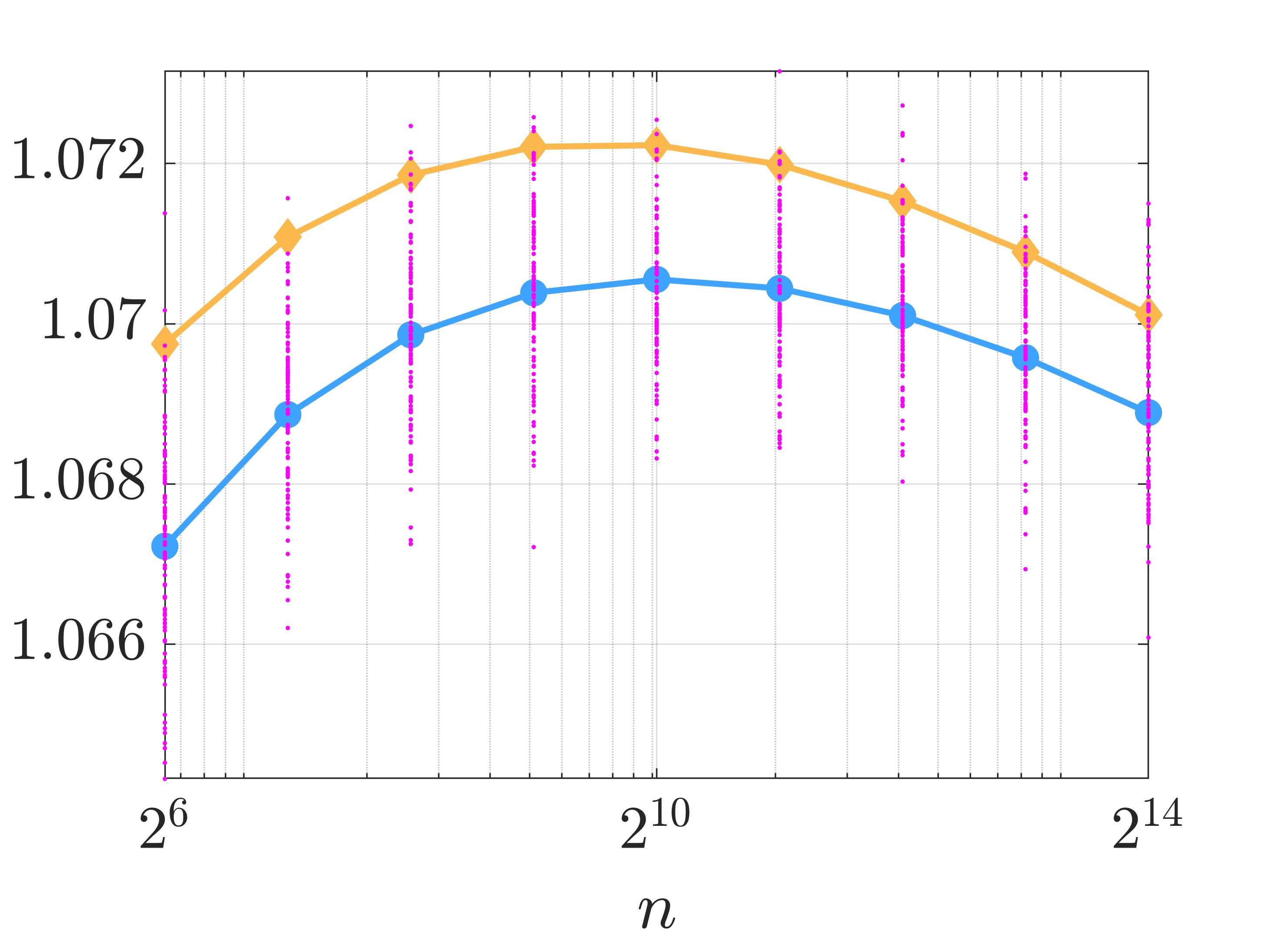}
\end{center}
\caption{\small $n^{1/d}\,\widehat D_{\mu,s}^{1/s}(\PP^{[n]}_a)$ ({\color{Dandelion} $\blacklozenge$}) and $n^{1/d}\,D_{\mu,s}^{1/s}(\PP^{[n]}_a)$ ({\color{Cerulean} $\bullet$}) as functions of $n$ for $s=4$ and $d=20$. Left: $a=0.75$ for top curves and $a=a^*(d,n,s)$ for bottom curves. Right: $a=a^*(d,n,s)$, the values of $n^{1/d}\,E_{\mu,s}(\Rb_n)$ computed numerically for 100 random quantisers $\Rb_n\simd \PP^{[n]}_{a^*}$ are shown as magenta dots.
}
\label{F:sphere-approx-2}
\end{figure}

\vsp
The property below shows that the asymptotic
behaviour of $\widehat {a^*}(d,n,s)$ when $n$ tends to infinity with $d$ does not depend on $s$.

\begin{corollary}\label{Coro:astar-asymptotic}
When $n=n(d)$ grows with $d\to\infty$, we have the following three cases for the limit of $\widehat {a^*}(d,n,s)$:\\
\noindent{\textit(i)} if $n^{1/d}\to\infty$, then $\lim_{d\to\infty} \widehat {a^*}(d,n,s) = 1$;\\
\noindent{\textit(ii)} if $n=\ml^d\,(1+o(1))$ with $\ml>1$, then $\lim_{d\to\infty} \widehat {a^*}(d,n,s) = \widehat {a^*_\ml} = \sqrt{1-1/\ml^2}$;\\
\noindent{\textit(iii)} if $\log n = o(d)$, then $\lim_{d\to\infty} \widehat {a^*}(d,n,s) = 0$.
\end{corollary}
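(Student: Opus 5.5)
The plan is to reduce everything to the limiting behaviour of $\kappa_{n,d}$ from Proposition~\ref{th:kappa_n}, with $\mu$ uniform on $\SS_{d-1}(1)$ so that $\Psi=\delta_1$. Then \eqref{Ds-approx} becomes
\[
\widehat E_{\mu,s}^s(n;a)=\Ex\left\{\left[(1-a)^2+4\,a\,\kappa_{n,d}\,\xi_d\right]^{s/2}\right\},
\]
with $\xi_d$ Weibull with shape $\delta=(d-1)/2$. The first key observation is that $\xi_d\to 1$ in probability as $d\to\infty$, since $\Pr\{\xi_d\le t\}=1-\exp(-t^\delta)$ tends to $0$ for $t<1$ and to $1$ for $t>1$. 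Moreover, all moments $\Gamma(1+k/\delta)$ tend to $1$, so $\xi_d^{s/2}$ is uniformly integrable. Hence, uniformly in $a\in[0,1]$ and $\kappa\in[0,1/2]$, we get $\widehat E_{\mu,s}^s(n;a)-g(a,\kappa_{n,d})^{s/2}\to 0$, where
\[
g(a,\kappa)=(1-a)^2+4a\kappa .
\]
Uniformity follows because the integrand is Lipschitz in $\xi$ on compacts, with $a,\kappa$ bounded, together with the moment bounds.

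The second step is to minimise the limit function. Since $x\mapsto x^{s/2}$ is increasing, minimising $g(a,\kappa)^{s/2}$ over $a$ is the same as minimising $g$ itself. This is why the limit is independent of $s$. The quadratic $g$ has minimiser $a=1-2\kappa$ over $[0,1]$, valid for $\kappa\le 1/2$, and it is the unique minimiser with $g$ strictly convex (second derivative $2$). Plugging in Proposition~\ref{th:kappa_n}:
\begin{itemize}
\item in case (i), $\kappa\to0$ gives $a\to1$;
\item in case (ii), $\kappa\to\frac12[1-\sqrt{1-1/\ml^2}]$ gives $1-2\kappa=\sqrt{1-1/\ml^2}$;
\item in case (iii), $\kappa\to1/2$ gives $a\to0$.
\end{itemize}

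The third step, which I expect to be the main technical point, is to transfer convergence of the objectives to convergence of the minimisers $\widehat{a^*}(d,n,s)$. I will use a standard argmin argument. Set $h_d(a)=\widehat E_{\mu,s}^s(n;a)$ and $h(a)=g(a,\kappa_\infty)^{s/2}$. We have $h_d\to h$ uniformly on the compact set $[0,1]$ (using also continuity of $g$ in $\kappa$ and $\kappa_{n,d}\to\kappa_\infty$), and $h$ has a unique minimiser $a_\infty$. Then any limit point $\bar a$ of $\widehat{a^*}$ satisfies
\[
h(\bar a)\le \liminf h_d(\widehat{a^*})\le \lim h_d(a_\infty)=h(a_\infty),
\]
so $\bar a=a_\infty$. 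One must check that restricting to $a\in[0,1]$ loses nothing. For $a>1$ the objective is larger than at $a=1$, since both $(1-a)^2$ and $4a\kappa\xi$ increase beyond $1$. Negative $a$ is excluded by the definition of the radius.

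In case (iii) some care is needed: $h(a)=((1-a)^2+2a)^{s/2}=(1+a^2)^{s/2}$, whose minimiser $a=0$ is unique, so the argument still applies. For $s=2$, Corollary~\ref{Coro:astar-s2-Pa} gives the result directly, because $\Gamma(1+1/\delta)\to1$; this serves as a consistency check.
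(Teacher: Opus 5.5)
Your proposal is correct and follows the paper's route. As $d\to\infty$ the Weibull variable $\xi_d$ concentrates at $1$, which reduces $\widehat E_{\mu,s}^s(n;a)$ to $[(1-a)^2+4a\kappa_{n,d}]^{s/2}$; the minimiser of this, $1-2\kappa_{n,d}$, does not depend on $s$, and Proposition~\ref{th:kappa_n} then gives the three limits. Your argmin step (uniform convergence on $[0,1]$, uniqueness of the limiting minimiser, exclusion of $a>1$) makes rigorous what the paper leaves implicit, and it also corrects the paper's power mismatch in $\widehat E_{\mu,s}(n;a)-[(1-a)^2+4a\kappa_{n,d}]\to 0$. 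One small nit: for $s<2$ the integrand is not Lipschitz in $\xi$ near $\xi=0$ when $a=1$, so invoke joint uniform continuity on compacts there instead.
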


\begin{proof}
Definition \eqref{Ds-approx} gives
\bea
\widehat E_{\mu,s}^s(n;a) = \int_{\zeta\geq 0} \left[(1-a)^2 + 4\,a\,\kappa_{n,d}\,\zeta\right]^{s/2}\, \dd F_d(\zeta) \,,
\eea
where the Weibull distribution tends to be concentrated at $1$ when $d\to\infty$. This implies that, for any given $s$, $\lim_{d\to\infty} \widehat E_{\mu,s}(n;a) - [(1-a)^2 + 4\,a\,\kappa_{n,d}]=0$, where $\kappa_{n,d}$ has the limiting behaviour indicated in Proposition~\ref{th:kappa_n}. Therefore,  $\lim_{d\to\infty} \widehat {a^*}(d,n,s) = \lim_{d\to\infty} 1-2\,\kappa_{n,d}$ does not depend on $s$ and is as indicated above for the three cases considered.
\end{proof}

On the left panel of Figure~\ref{F:sphere} we can observe the tendency $a^*(d,n,s)\to 0$ when $d$ is large but $n$ is not exponentially large compared to $d$; on the top-left panel with $d=10$ with see that $a^*(d,n,1)$ approaches 1 as $n$ increases. The left panel of Figure~\ref{F:sphere3} provides an illustration of  case (\textit{ii}) for $n=2^d$ (i.e., $\ml=2$) with $s=2,4$, for which $\widehat {a^*_\ml} = \sqrt{3}/2 \simeq 0.8660$.


\vsp
From Lemma~\ref{L:main-extreme-value2}, the same arguments as those leading to Definition~\ref{D:main-extreme-value} indicate that, for any $\Rb_n\simd \PP_a^{[n]}$ the extreme-value approximation of the $\mg$-quantile of the  distance c.d.f.\ $F_n(t; \Rb_n,\mu)$ is $\widehat q_\mg = \sqrt{(1-a)^2+4\,a\,\kappa_{n,d}\,t_\mg}$, with $t_\mg=[-\log(1-\mg)]^{1/\delta}$ the $\mg$-quantile of the Weibull distribution with c.d.f.\ $F_d(t)=1-\exp(-t^\delta)$, $\delta=(d-1)/2$.

The minimum of $\widehat q_\mg$ with respect to $a$ is obtained for $\widehat {a_*}(d,n,\mg)=1-2\,\kappa_{n,d}\,t_\mg$. Since $t_\mg\to 1$ as $d\to\infty$, for any fixed $\mg$, $\widehat {a_*}(d,n,\mg)$ has the same limits as those indicated in Corollary~\ref{Coro:astar-asymptotic} for $\widehat {a^*}(d,n,s)$, independently of $\mg$, when $n = n(d) \to \infty.$

\begin{remark}\label{R:quantiles} Note that $\widehat q_\mg = \widehat E_{\mu,2}(n;a)$ for any $a$ when $\mg=\mg(d,2)=F_d[\Gamma(1+1/\delta)]$, showing that for large $n$, minimising the expected $(\mu,2)$-distortion is equivalent to minimising the $\mg$-quantile of the mean distance c.d.f.\ for
\bea
\mg=\mg(d,2)=1- \e1^{-\e1^{-\mgbb}} + \frac{\pi^2}{6\,d}\, \e1^{-\mgbb-\e1^{-\mgbb}} +\SO(d^{-2})  \,, \ d\to\infty\,,
\eea
where $\mgbb$ is Euler's constant and $1- \e1^{-\e1^{-\mgbb}}\simeq 0.429624$. This suggests that $L_2$-quantisation is roughly equivalent to minimising the median of the distance c.d.f., a phenomenon that can also be observed for other sets than the sphere.
\fin
\end{remark}

\subsection{$\mu$ possesses a norm-concentration property} \label{S:extreme-value-general-mu}

We say that the probability measure  $\mu$ on $\RR^d$ possesses a {\em norm-concentration property} when it satisfies
\be\label{norm-concentration}
\mbox{for all } \me>0\,, \quad \mu\left\{\left| \|U\|-A \right| \geq \me\right\} \leq 2\,\exp(-c\, d\, \me^2)
\ee
when $U\sim\mu$, for some $A>0$ and $c$ a constant not depending on $d$. A multivariate distribution such that the components of $U$ are independent and sub-Gaussian with zero mean and strictly positive variance satisfies \eqref{norm-concentration}, see \cite[Th.~3.1.1]{Vershynin2018}. Here we consider distributions that satisfy \eqref{norm-concentration} and are spherically-symmetric, with the uniform distribution in $\SB_b(1)$ and the multivariate normal distribution for which $r=\|U\|$ has the density \eqref{psi-normal} as particular cases. Both satisfy \eqref{norm-concentration} with $A=1$ and, without any loss of generality we consider measures $\mu$ such that $A=1$ in the following.

We continue to consider random quantisers $\Rb_n\simd\PP_a^{[n]}$, focusing on the extreme-value approximation $\widehat D_{\mu,s}(\PP_a^{[n]})$ of the $(\mu,s)$-distortion as defined in Definition~\ref{Ds-approx}. When $n=n(d)$ grows with $d$ and $\mu$ is simply spherically symmetric, the limiting value of $\widehat {a^*}(d,n,s)$ which minimises $\widehat D_{\mu,s}(\PP_a^{[n]})$  generally depends on $s$ in cases (\textit{i}) and (\textit{ii}) of Corollary~\ref{Coro:astar-asymptotic}.  The next proposition shows that, if $\mu$ satisfies \eqref{norm-concentration}, the asymptotic behaviour of $\widehat{a^*}(d,n,s)$ remains unchanged from that described in Corollary~\ref{Coro:astar-asymptotic}.

\begin{proposition} \label{Prop:extreme-value-general-mu}
Assume that $d\geq 3$, that $\Px=\PP_a^{[n]}$, and that $\mu$ is spherically symmetric and satisfies \eqref{norm-concentration}. Then, as $d \to \infty$ with $n = n(d)$ growing accordingly, the limit of $\widehat{a^*}(d,n,s)$ follows the three cases outlined in Corollary~\ref{Coro:astar-asymptotic}.
\end{proposition}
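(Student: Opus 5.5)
The plan is to reduce to the sphere case of Corollary~\ref{Coro:astar-asymptotic}. The reduction uses the fact that, under \eqref{norm-concentration} with $A=1$, the radial law $\Psi$ converges to $\delta_1$ fast enough that it can be replaced by $\delta_1$ in \eqref{Ds-approx}, uniformly in $a\in[0,1]$ (or $a$ in a bounded interval; see below).

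Write $g_d(r,a)=\int_{\zeta\geq 0}[(r-a)^2+4ar\kappa_{n,d}\zeta]^{s/2}\,\dd F_d(\zeta)$, so that $\widehat E^s_{\mu,s}(n;a)=\int g_d(r,a)\,\dd\Psi(r)$. The sphere function is $g_d(1,a)$. The first step is to show $\sup_{a\in[0,a_{\max}]}|\widehat E^s_{\mu,s}(n;a)-g_d(1,a)|\to 0$ as $d\to\infty$.
\begin{itemize}
\item For $|r-1|\leq\me$, use a local Lipschitz bound in $r$. Since $\kappa_{n,d}\leq 1/2$ and $\Ex\{\xi_d^k\}=\Gamma(1+k/\delta)$ stays bounded, $|\partial_r g_d|$ is bounded by a constant depending only on $s$ and $a_{\max}$ for $r\leq 2$. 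This piece contributes $\SO(\me)$.
\item For $|r-1|>\me$, bound $g_d(r,a)\leq C(1+r)^s$. Integrating $r^s$ against the tail $2\exp(-cd\me^2)$ via the layer-cake formula \eqref{moment-ds-cdf} gives a contribution that is $\SO(\e1^{-cd\me^2/2})$.
\item Let $\me\to 0$ slowly, e.g.\ $\me=d^{-1/4}$, so that both pieces vanish.
\end{itemize}

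The second step is to confine the minimiser to a compact set, so that uniform convergence implies convergence of argmins. For $a\geq 2$ one has $(r-a)^2\geq (a-r)^2$, and the concentration bound gives $\widehat E^s\geq c_0>0$ there. Meanwhile $\widehat E^s(n;0)=M_{\Psi,s}\to 1$, and for the sphere value $\min_a g_d(1,a)<1$ in cases (i)--(ii) with margin. This is not quite enough on its own, so I will instead compare directly: for $a\geq 2$, $(r-a)^2\geq 1/2$ with high probability, giving a lower bound on $\widehat E^s$ that exceeds the value at $\widehat{a^*}$ from the sphere problem, at least in cases (i)--(ii). Case (iii) is handled directly, since there the limit minimiser $0$ is already inside the compact set. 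I therefore restrict attention to $a\in[0,2]$.

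The third step applies the proof of Corollary~\ref{Coro:astar-asymptotic}. There, $g_d(1,a)$ is shown to be asymptotically $h_d(a)=[(1-a)^2+4a\kappa_{n,d}]^{s/2}$, because $\xi_d\to 1$ in probability with bounded moments. By Proposition~\ref{th:kappa_n}, $\kappa_{n,d}\to\kappa_\infty\in\{0,\tfrac12(1-\sqrt{1-1/\ml^2}),\tfrac12\}$. Hence $\widehat E^s_{\mu,s}(n;\cdot)\to h_\infty(a)=[(1-a)^2+4a\kappa_\infty]^{s/2}$ uniformly on $[0,2]$. The function $h_\infty$ is strictly unimodal: it is an increasing function of the convex quadratic $a^2-2a(1-2\kappa_\infty)+1$. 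Its unique minimiser on $[0,2]$ is $\max(0,1-2\kappa_\infty)$, which equals $1$, $\sqrt{1-1/\ml^2}$ and $0$ in the three cases. The standard argmin-continuity argument (uniform convergence plus a unique well-separated minimiser on a compact set) then gives $\widehat{a^*}(d,n,s)\to$ that value, independently of $s$.

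The main obstacle is the uniform control of the tail part of the first step for general $s$ when $a$ ranges over a compact set. Care is also needed in cases (i)--(ii), where $\kappa_{n,d}\zeta$ is small and the moments of $\xi_d$ must be controlled uniformly in $d$; here I would use $\Gamma(1+k/\delta)\leq k!$ for $\delta\geq 1$. Everything else is the sphere argument with $\Psi$ replaced by $\delta_1$.
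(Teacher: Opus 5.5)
Your proposal is sound in outline and more careful than the paper's proof. The paper works pointwise in $a$. It regards $T_{n,d}=(r-a)^2+4ar\kappa_{n,d}\zeta$ as a function of the random pair $(r,\zeta)$ and notes $T_{n,d}\to T_\infty=(1-a)^2+4a\kappa_\infty$ in probability. It uses \eqref{norm-concentration} only to get bounded moments, hence uniform integrability of $T_{n,d}^{s/2}$. It then deduces $\widehat D_{\mu,s}(\PP_a^{[n]})\to T_\infty^{s/2}$ for each fixed $a$ and simply reads off the minimiser of the limit.

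You instead separate the radial replacement $\Psi\to\delta_1$ from the sphere case, make the convergence uniform on a compact range of $a$, confine $\widehat{a^*}$ to that range, and invoke argmin continuity. These last three ingredients are exactly what is needed to pass from convergence of the objective to convergence of its minimiser. The paper leaves that step implicit, and pointwise convergence alone does not give it. In the other direction, the paper's version shows that only bounded moments of $\|U\|$ matter. Your near/far split using the exponential rate could therefore be replaced by $r\to 1$ in probability plus uniform integrability and equicontinuity in $a$.

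Two details need repair.

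\textbf{Step 2 (confinement).} The bound $(r-a)^2\geq 1/2$ only yields $\widehat E^s_{\mu,s}(n;a)\gtrsim 2^{-s/2}$. That does not exceed the case (ii) limiting minimum $\ml^{-s}$ when $\ml\leq\sqrt2$. Also, saying case (iii) is handled because $0\in[0,2]$ is not a confinement argument: what must be excluded is $\widehat{a^*}\geq 2$. A uniform fix: for $a\geq 2\geq r$ one has $\partial_a[(r-a)^2+4ar\kappa_{n,d}\zeta]=2(a-r)+4r\kappa_{n,d}\zeta\geq 0$, hence
\[
\inf_{a\geq 2}\widehat E^s_{\mu,s}(n;a)\geq \Ex\left\{[(r-2)^2+8r\kappa_{n,d}\zeta]^{s/2}\,\Ind_{\{r\leq 2\}}\right\}\to(1+8\kappa_\infty)^{s/2}\,.
\]
On the other hand, $\widehat E^s_{\mu,s}(n;\widehat{a^*})\leq\widehat E^s_{\mu,s}(n;1-2\kappa_\infty)\to[4\kappa_\infty(1-\kappa_\infty)]^{s/2}$. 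Since $1+8\kappa_\infty-4\kappa_\infty(1-\kappa_\infty)=(1+2\kappa_\infty)^2>0$, this separates the two values in all three cases.

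\textbf{Step 1 (Lipschitz bound).} The uniform-in-$d$ Lipschitz bound on $\partial_r g_d$ fails for $0<s<1$: in case (i), $g_d(r,1)\to|r-1|^s$. Use instead $|x^{s/2}-y^{s/2}|\leq|x-y|^{s/2}$ for $s\leq 2$, which gives an $\SO(\me^{s/2})$ contribution in place of $\SO(\me)$.

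With these adjustments the argument is complete.
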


\begin{proof} Consider the random variable $T_{n,d}=(r-a)^2 + 4\,a\,r\,\kappa_{n,d}\,\zeta$ in \eqref{Ds-approx}. The limiting value $\kappa_\infty$ of the quantile $\kappa_{n,d}$ in the three cases considered is given in Proposition~\ref{th:kappa_n}, and $T_{n,d}$ converges in probability to $T_\infty=(1-a)^2 + 4\,a\,\kappa_\infty$ when $d\to\infty$. From \eqref{norm-concentration}, for any given $s>0$ the sequence $\{T_{n,d}^{s/2}\}$ is bounded in $L^p$ for any $p$ and is thus uniformly integrable. Therefore, $\widehat D_{\mu,s}(\PP_a^{[n]})\to T_\infty^{s/2}$, which is minimum for the values of $a$ indicated in Corollary~\ref{Coro:astar-asymptotic}.
\end{proof}

To illustrate the above analysis, we consider again two specific cases for $\mu$: uniform distribution in the unit ball $\SB_d(1)$, in the small $n$ regime with fixed $n$ and increasing $d$; multivariate normal distribution $\SN(\0b_d, \Ib_d/d)$, in the exponential regime with $n = 2^d$.

\subsubsection{$\mu$ is uniform in the unit ball $\SB_d(1)$} \label{S:extreme-value-ball}


In the non-asymptotic regime, $\widehat{a^*}(d,n,2)$ is given by \eqref{hat_a_2} for $s=2$; for $s=4$, $\widehat{a^*}(d,n,4)$ is a root of the third-degree polynomial corresponding to the derivative of \eqref{E4^2-general}.

Figure~\ref{F:bstar_ALLastar_ball_s2_D} displays the following quantities as functions of $d$ for two values of $n$: $a^*(d,n,2)$, its approximation $\widehat{a^*}(d,n,2)$,
and $b^*(d,n,2)$, which minimises $D_{\mu,2}(\PP_{0,b}^{[n]})$ (see Section~\ref{S:ball}). The comparative behaviour of $a^*(d,n,2)$ and $b^*(d,n,2)$ has already been discussed in Section~\ref{S:ball}; the plots of $a^*(d,n,2)$ and $\widehat{a^*}(d,n,2)$ are practically indistinguishable.

Figure~\ref{F:ALLeff_ball_s2_D} displays the efficiencies $D_{\mu,s}^{1/s}(\PP_a^{[n]})/D_{\mu,s}^{1/s}(\PP_{0,b^*}^{[n]})$ as functions of $d$ for $a=a^*(d,n,2)$ and $a=\widehat{a^*}(d,n,2)$. These efficiencies are quasi indistinguishable, and $\PP_{a^*}^{[n]}$ and $\PP_{\widehat{a^*}}^{[n]}$ both outperform $\PP_{0,b^*}^{[n]}$ for the values of $n$ and $d$ considered. Additionally, $\PP_{0,b^*}^{[n]}$ significantly outperforms $\PP_{0,1}^{[n]}$; see Figure~\ref{F:ball} in Section~\ref{S:ball}.

\begin{figure}[ht!]
\begin{center}
\includegraphics[width=.49\linewidth]{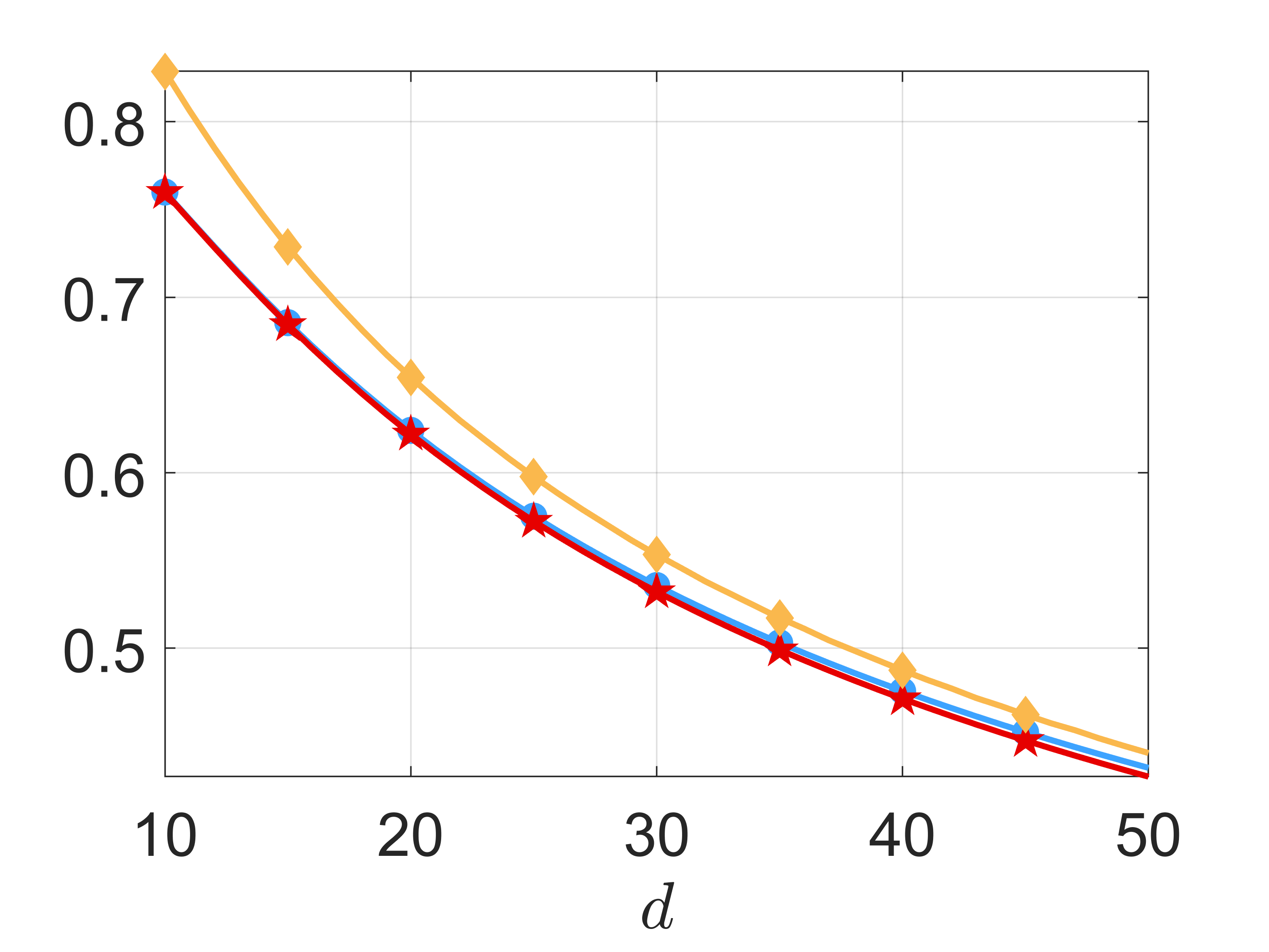}
\includegraphics[width=.49\linewidth]{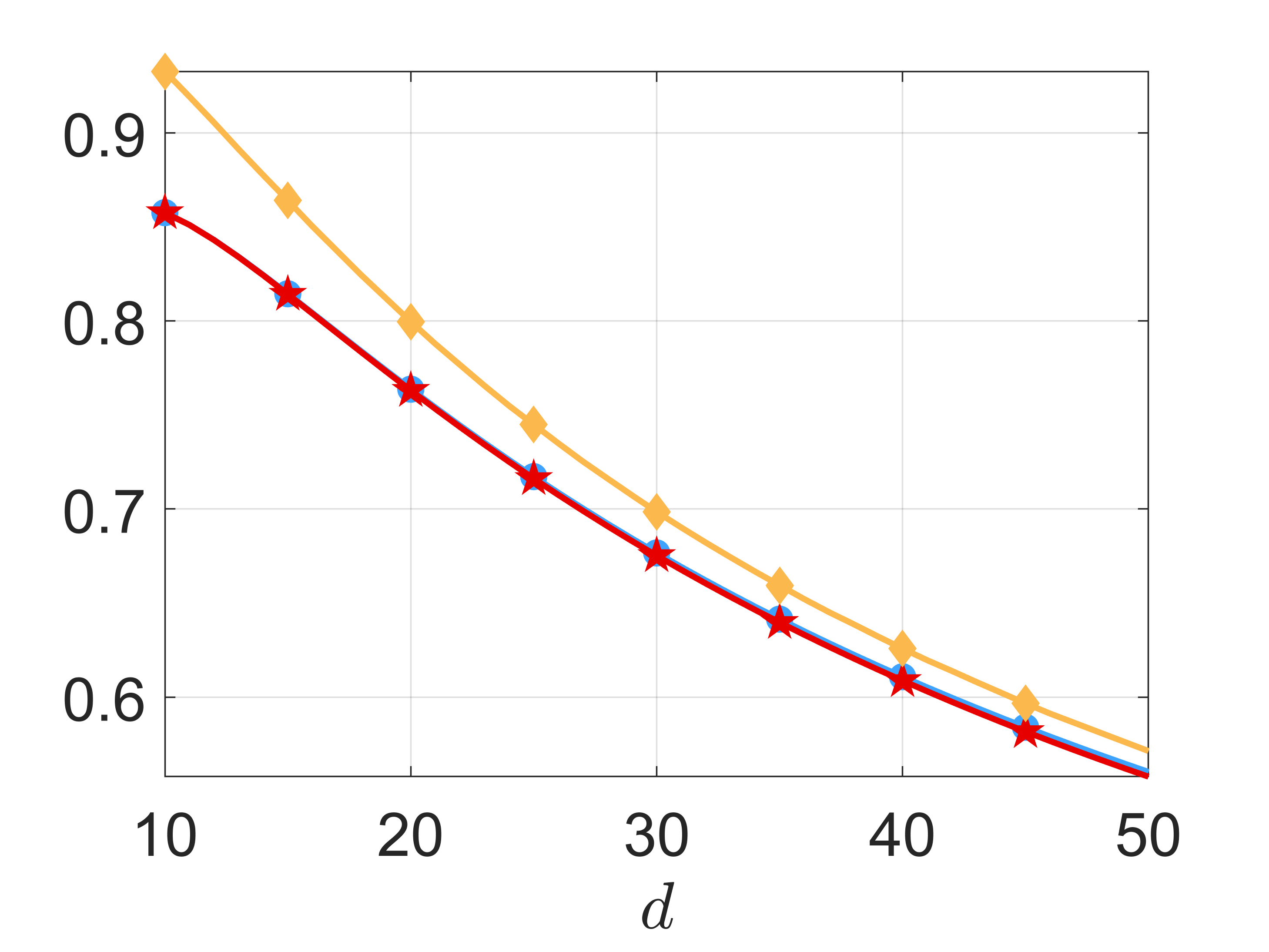}
\end{center}
\caption{\small $a^*(d,n,2)$ ({\color{Cerulean} $\bullet$}), $\widehat{a^*}(d,n,2)$ ({\color{red} $\bigstar$})
and $b^*(d,n,2)$ ({\color{Dandelion} $\blacklozenge$}) as functions of $d$ for $n=1\,000$ (left) and $n=100\,000$ (right).}
\label{F:bstar_ALLastar_ball_s2_D}
\end{figure}

\begin{figure}[ht!]
\begin{center}
\includegraphics[width=.49\linewidth]{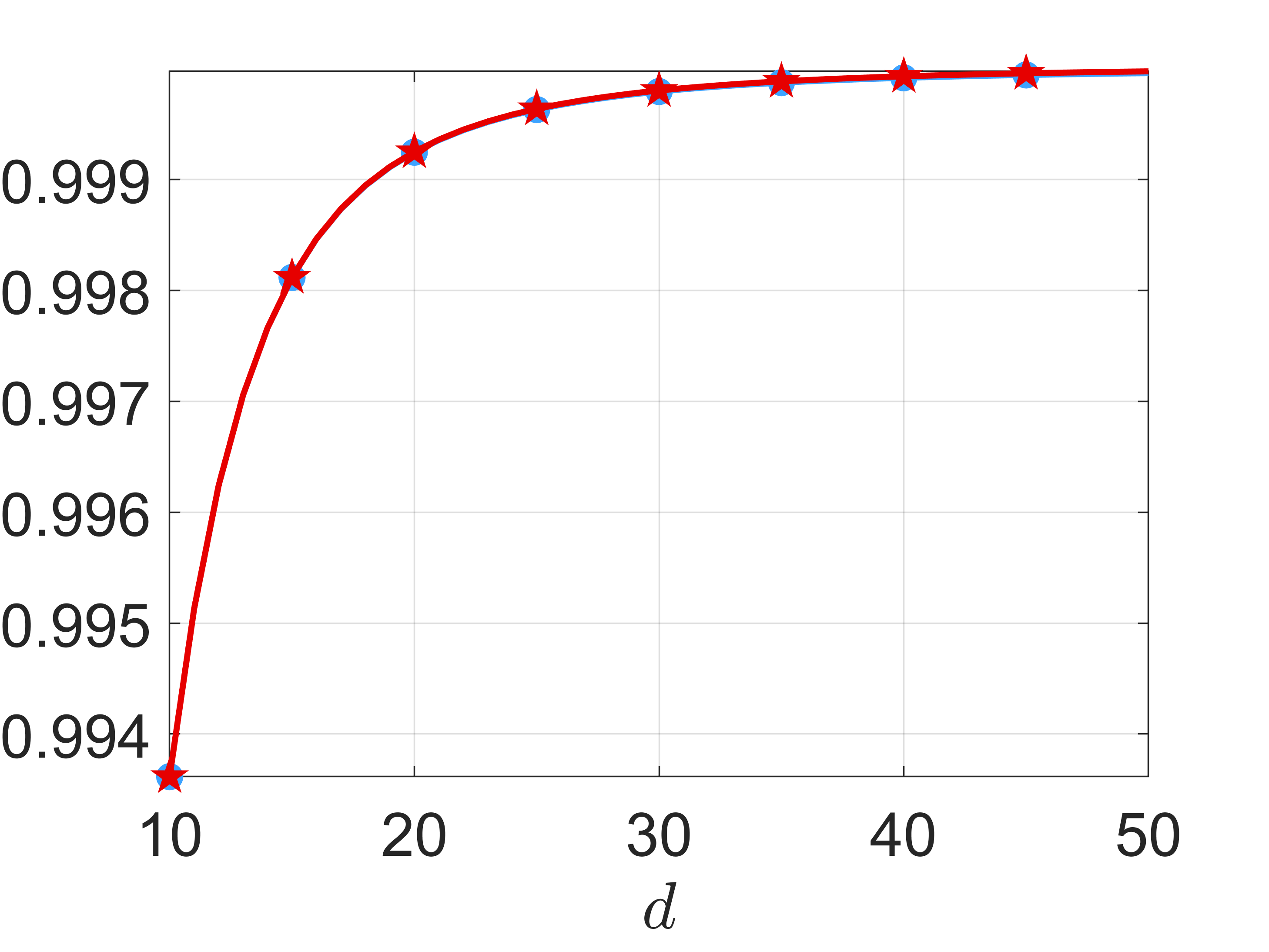}
\includegraphics[width=.49\linewidth]{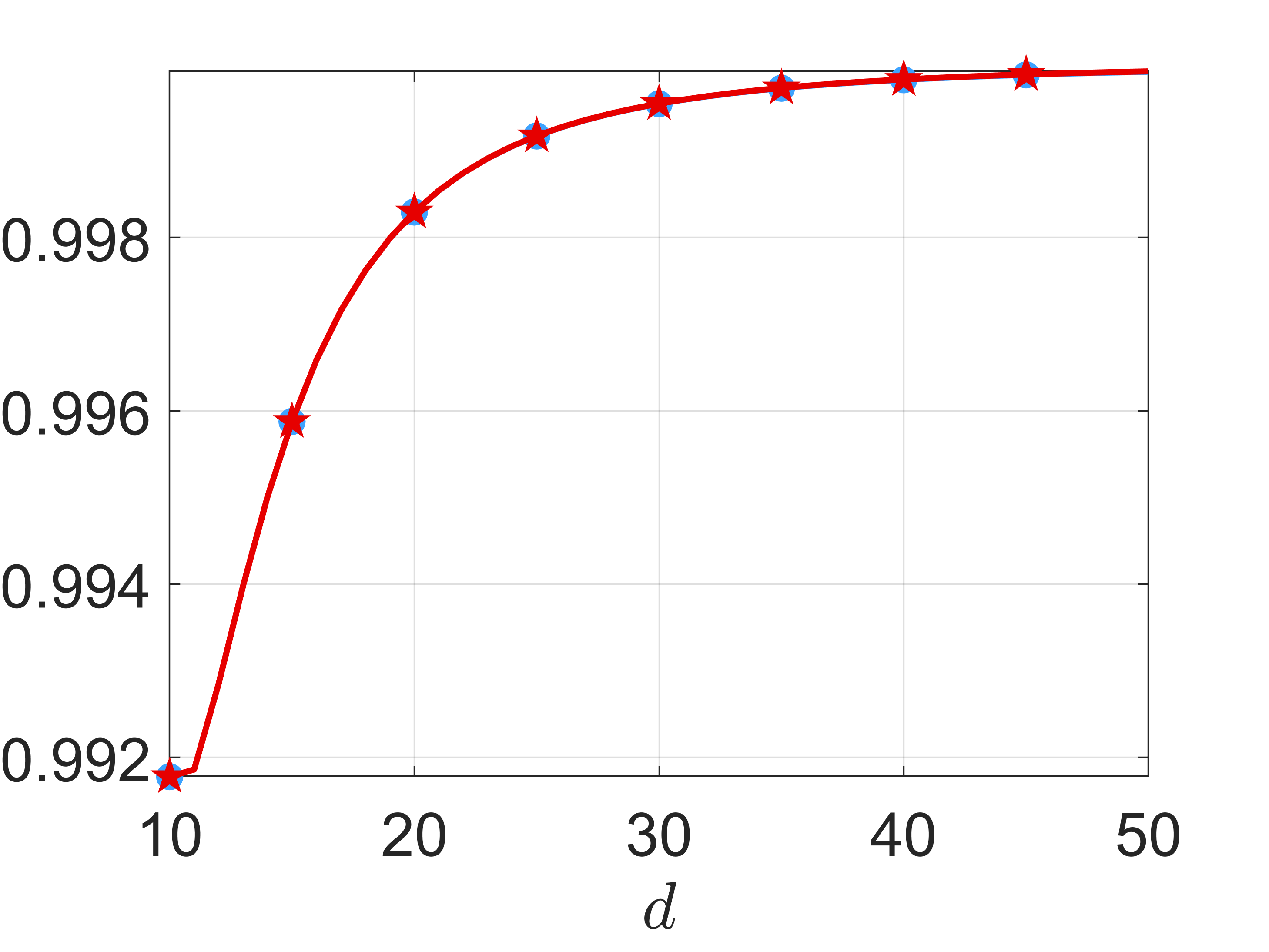}
\end{center}
\caption{\small Efficiencies $D_{\mu,s}^{1/s}(\PP_a^{[n]})/D_{\mu,s}^{1/s}(\PP_{0,b^*}^{[n]})$ as functions of $d$: $a=a^*(d,n,2)$ ({\color{Cerulean} $\bullet$}), $a=\widehat{a^*}(d,n,2)$ ({\color{red} $\bigstar$});
$n=1\,000$ (left) and $n=100\,000$ (right).}
\label{F:ALLeff_ball_s2_D}
\end{figure}



\subsubsection{$\mu$ is normal $\SN(\0b_d,\Ib_d/d)$} \label{S:extreme-value-normal}

We noticed in Section~\ref{S:normal} that $D_{\mu,s}(\PP_{a^*}^{[n]}) < D_{\mu,s}(\mu^{[n]}_{\ms^*})$ for $n \lesssim n_0\, \ml_0^d$ for some $n_0$ and $\ml_0$, where $a^*=a^*(d,n,s)$ and $\ms^*$ respectively minimise $D_{\mu,s}(\PP_a^{[n]})$ and $D_{\mu,s}(\mu^{[n]}_{\ms})$. Here we investigate the behaviour of $D_{\mu,s}(\PP_{a}^{[n]})$ for other choices of $a$ in a similar asymptotic regime where $n$ grows exponentially fast with $d$.

Given that $M_{\psi,1}=\Ex\{\|U\|\} \to 1$ and $\var\{\|U\|\} \to 0$ as $d\to\infty$ when $U \sim \mu$, we analyse the following cases for $a$.
(\textit{i}) Uniform distribution on $\SS_{d-1}(1)$:
we consider $a = \widetilde{a^*}(d,n,s)$ which minimises $D_{\widetilde\mu,s}^{1/s}(\PP_a^{[n]})$ for $\widetilde\mu$ uniform on $\SS_{d-1}(1)$, as discussed in Section~\ref{S:sphere}.
(\textit{ii}) Scaled uniform distribution: we also consider $a = M_{\psi,1}\widetilde{a^*}(d,n,s)$, obtained when $\widetilde\mu$ is uniform on $\SS_{d-1}(M_{\psi,1})$.
As we consider the asymptotic regime $n = 2^d$, we additionally examine (\textit{iii}) the limiting value $\widehat{a^*_\ml} = \sqrt{3}/2$ from Corollary~\ref{Coro:astar-asymptotic}-(\textit{ii});
(\textit{iv}) the value $\widehat{a^*}(d,n,s)$, which minimises $\widehat{E}_{\mu,s}^s(n;a)$ given by \eqref{Ds-approx}, where the quantile $\kappa_{n,d}$ is replaced by its asymptotic value $(1/2)(1 - \sqrt{3}/2)$.

 Figure~\ref{F:normal_n2d_d3-20_s4} (left) displays $\ms^*(d,n,s)$, $a^*(d,n,s)$, $\widetilde{a^*}(d,n,s)$ and $M_{\psi,1}\widetilde{a^*}(d,n,s)$ as functions of $d$ for $s=4$, together with $\widehat{a^*_\ml}$ (indicated by an horizontal line) and
$\widehat{a^*}(d,n,s)$.
The right panel in the same figure shows the efficiency of the optimal normal distribution $\mu_{\ms^*}$ compared to $\PP_a$ uniform on $\SS_{d-1}(a)$, that is, $D_{\mu,s}^{1/s}(\PP_a^{[n]})/D_{\mu,s}^{1/s}(\mu_{\ms^*}^{[n]})$, for the five choices of $a$ considered: $\PP_{a}$ performs better than $\mu_{\ms^*}$ (for $d\geq 4$ when $a=M_{\psi,1}\widetilde{a^*}(d,n,s)$). Note that the efficiencies for $\widehat{a^*_\ml}$ (purple triangles) and $a^*(d,n,s)$ (blue dots) almost coincide for $d\geq 5$.

\begin{figure}[ht!]
\begin{center}
\includegraphics[width=.49\linewidth]{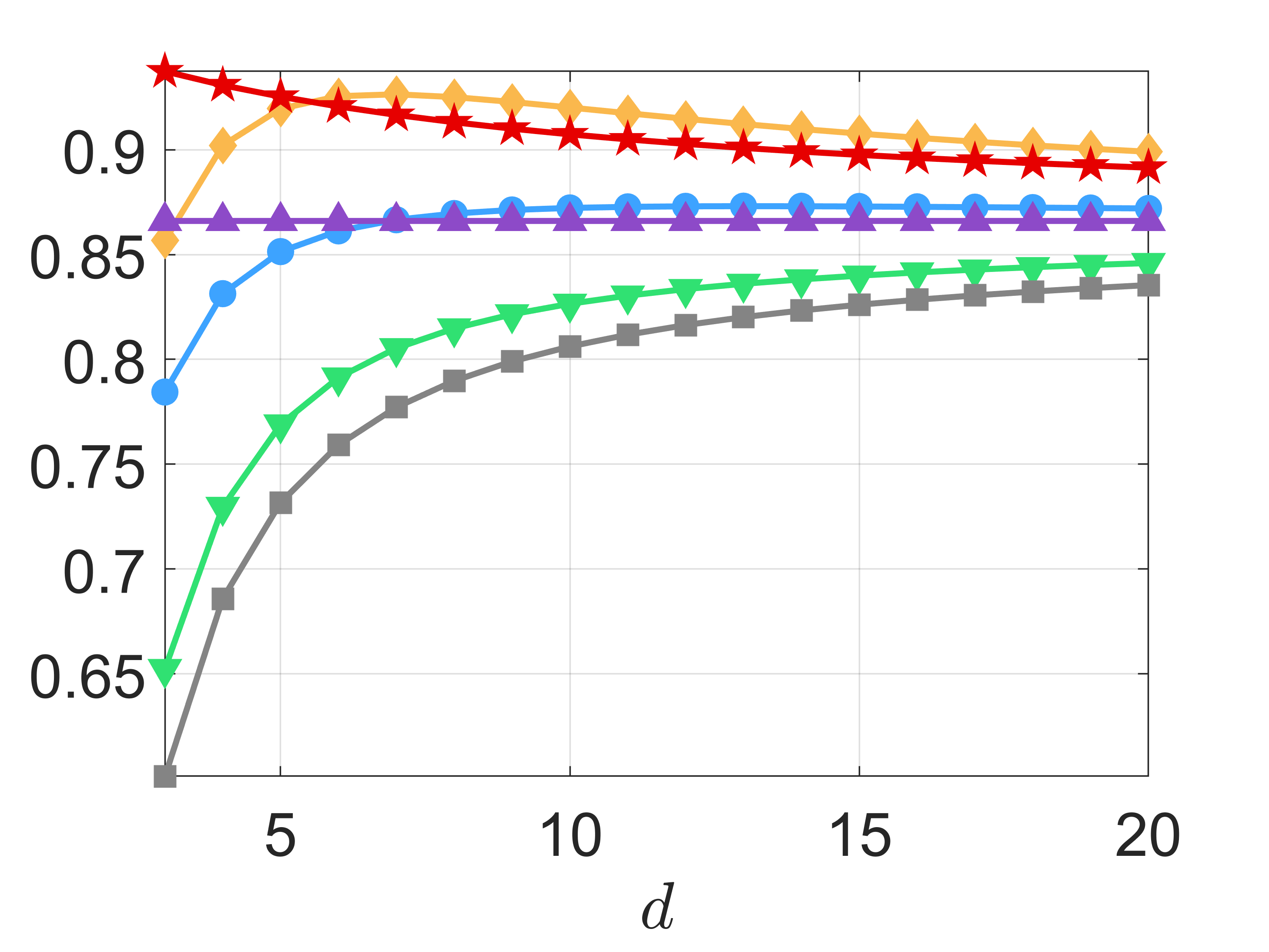}
\includegraphics[width=.49\linewidth]{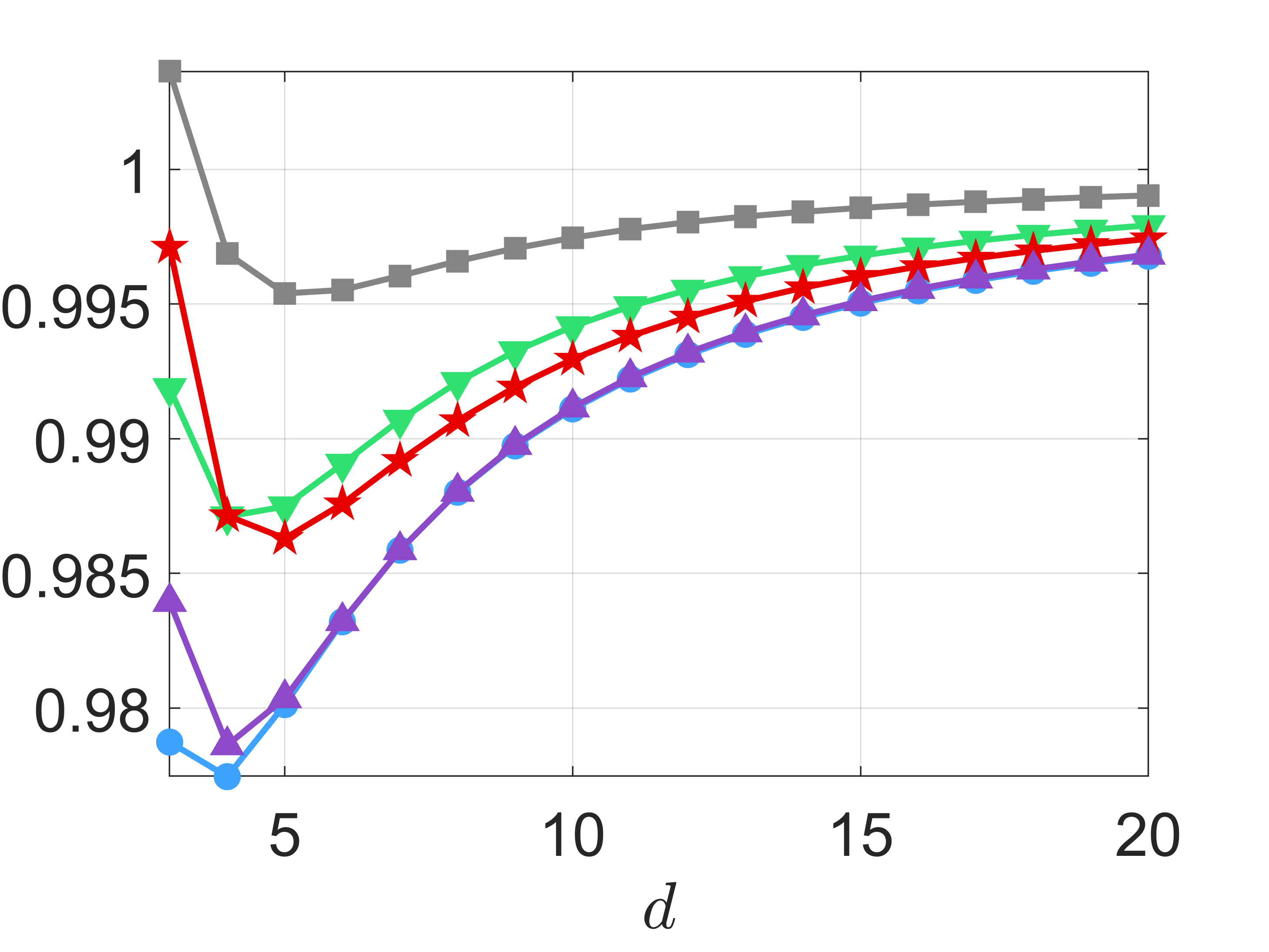}
\end{center}
\caption{\small  Left: $\ms^*(d,n,s)$ ({\color{Dandelion} $\blacklozenge$}),
$a^*(d,n,s)$ ({\color{Cerulean} $\bullet$}),
$\widetilde{a^*}(d,n,s)$ ({\color{green} $\blacktriangledown$}),
$M_{\psi,1}\widetilde{a^*}(d,n,s)$ ({\color{gray} {\tiny $\blacksquare$}}),
$\widehat{a^*_\ml}=\sqrt{3}/2$ ({\color{Orchid} $\blacktriangle$}) and
$\widehat{a^*}(d,n,s)$ ({\color{red} $\bigstar$}) as functions of $d$.
Right: efficiencies $D_{\mu,s}^{1/s}(\PP_a^{[n]})/D_{\mu,s}^{1/s}(\mu_{\ms^*}^{[n]})$ as functions of $d$ for the choices of $a$ indicated on the left panel ($n=2^d$, $s=4$).}
\label{F:normal_n2d_d3-20_s4}
\end{figure}

\begin{remark}
From Corollary~\ref{Coro:astar-asymptotic} and Proposition~\ref{Prop:extreme-value-general-mu},all quantities  $a^*(d,n,s)$, $\widetilde{a^*}(d,n,s)$,
$M_{\psi,1}\widetilde{a^*}(d,n,s)$ and
$\widehat{a^*}(d,n,s)$  tend to
$\widehat{a^*_\ml}=\sqrt{3}/2$ when $d\to\infty$. Our numerical results suggest that, in the considered asymptotic regime, $\ms^*(d,n,s)$ also converges to the limiting value $\widehat{a^*_\ml} = \sqrt{3}/2$. However, a rigorous theoretical analysis is still required to confirm this observation. The same is true for the asymptotic behaviour of $b^*(d,n,s)$ in the optimised quantiser $\PP_{0,b^*}^{[n]}$ when $\mu$ is uniform in $\SB_d(1)$. From \eqref{eq:quantile2} in Proposition~\ref{th:dist}, we have to analyse the behaviour of $d(\ub,\Rb_n)=\min_{i=1,\ldots,n} \left[(\|\ub\|-R_i)^2 +4\,\|\ub\|\,R_i \, \zeta_i\right]$ when $n\to\infty$, where the random variables $R_i$ are $\zeta_i$ are i.i.d.\ and mutually independent, with $\zeta_i \eqd \beta_{\delta,\delta}$. The assumption that $R=\|X\|$ with the probability measure $\PP$ of $X$ satisfying a norm-concentration property like \eqref{norm-concentration} does not directly imply an extreme-value property for $d(\ub,\Rb_n)$. The cases where $\PP = \PP_a$ (uniform distribution on $\SB_d(a)$) and $\PP = \mu_\ms$ (multivariate normal distribution) already pose significant challenges.
\fin
\end{remark}

\section{Conclusions}~\label{S:conclusion}

We have demonstrated that, for a spherically symmetric measure $\mu$ in $\RR^d$, unless the sample size $n$ is extremely large, a random quantiser uniformly distributed on a sphere of suitable radius can significantly outperform a random quantiser whose distribution follows the asymptotic limit predicted by Zador’s theorem. The optimal radius can be determined numerically by minimising a triple integral, which can be evaluated with arbitrary precision. Additionally, an approximation of the optimal radius is available, derived from extreme-value theory. Since the variability of the distortion for such random quantisers rapidly diminishes as $n$ increases, this approach is highly practical for constructing high-performance quantisers.

While the restriction to spherically symmetric distributions may appear limiting, it provides a robust foundation for broader applications. For instance, quantising the uniform measure on the $d$-dimensional hypercube $[-1,1]^d$ presents a compelling and challenging problem, particularly in space-filling design for computer experiments (see, e.g., \cite{PronMul2012}).  While greedy quantisation offers an attractive approach for the incremental construction of designs in low dimensions \cite{LuschgyP2015,NogalesPR2021}, it becomes computationally infeasible for large $d$. In such cases, recent results from \cite{noonanhigh2024} allow the uniform measure to be approximated by a spherically symmetric distribution. Similarly, one can consider random quantisers distributed according to a product measure, which can itself be approximated by a spherically symmetric distribution. Preliminary findings then suggest that quantisers distributed on the vertices of a smaller hypercube exhibit promising performance, paving the way for further advances in this direction.

\bibliographystyle{plain}
\bibliography{high-d} 

\end{document}